\newcommand{\Cstar}{$C^*$}
\newcommand{\Acc}{A_{cc}}
\newcommand{\CC}{\mathbb{C}}
\newcommand{\DD}{\mathcal{D}}
\newcommand{\XD}{X_\mathcal{D}}
\newcommand{\KN}{\mathcal{K_N}}
\newcommand{\NN}{\mathbb{N}}
\newcommand{\RR}{\mathbb{R}}
\newcommand{\Nvn}{\mathcal{N}}
\newcommand{\Mvn}{\mathcal{M}}
\newcommand{\ZZ}{\mathbb{Z}}
\newcommand{\Res}{\operatorname{Res}}
\newcommand{\HH}{\mathcal{H}}
\newcommand{\LL}{\mathcal{L}}
\newcommand{\HHt}{{\mathcal{H}_h}}
\newcommand{\AAA}{\mathcal{A}}
\newcommand{\End}{\operatorname{End}}
\newcommand{\paths}{E^*}
\newcommand{\Shat}{T}
\newcommand{\Scheck}{\tilde{T}}
\newcommand{\Sc}{U}
\newcommand{\s}{\sigma}
\newcommand{\spec}{\operatorname{spec}}
\newcommand{\coker}{\operatorname{coker}}
\newcommand{\TTrs}{\mathbb{T}^2}
\newcommand{\Trs}{\mathbb{T}}
\newcommand{\cn}{{\mathcal N}}\newcommand{\cM}{{\mathcal M}}
\newtheorem{lem}{Lemma}
\newtheorem{cor}{Corollary}
\newtheorem{prop}{Proposition}
\newtheorem{defn}{Definition}
\newtheorem{thm}{Theorem}
\newcommand{\ben}{\begin{displaymath}}
        \newcommand{\een}{\end{displaymath}}
\newcommand{\bma}{\left(\begin{array}{cc}}
\newcommand{\ema}{\end{array}\right)}
\newcommand{\bean}{\begin{eqnarray*}}
        \newcommand{\eean}{\end{eqnarray*}}
\newcommand{\nno}{\nonumber\\}
\begin{document}

\title{Spectral flow invariants and twisted cyclic theory from the Haar state on  $SU_q(2)$}
        \author{A. L. Carey} \author{A. Rennie} \author{K. Tong}
\address{Mathematical Sciences Institute,
  Australian National University, 0200, ACT, Australia}
\address{Institute for Mathematical Sciences Universitetsparken
5\\DK-2100 Copenhagen Denmark}
\email{\!\!acarey@maths.anu.edu.au,\!
        rennie@maths.anu.edu.au,\! kester.tong@gmail.com}
        \maketitle

\centerline{Abstract}

In \cite{CPR2}, we presented 
a $K$-theoretic approach 
to finding invariants of algebras with no non-trivial traces.
This paper presents a new example that is more typical of 
the generic situation.  
This is the case of an algebra that admits 
only non-faithful traces, namely
$SU_q(2)$ and also KMS states. Our main results are index theorems
(which calculate spectral flow),
one using ordinary cyclic cohomology and the other using twisted cyclic cohomology,
where the twisting comes from
the generator of the  modular group of the Haar state. In contrast to  the Cuntz
algebras studied in \cite{CPR2}, the computations  are
considerably more complex and interesting, because
there are nontrivial `eta'
contributions to this index. 
\parskip=0pt
\tableofcontents
\parskip=5pt
\section{Introduction}


Motivated by our study of semifinite spectral triples and 
Kasparov modules for graph
algebras, \cite{CPR,PR}, we have found a new way of extracting invariants
from
algebras using non-tracial states. The basic constructions and first
examples are in \cite{CPR2}, where we studied the Cuntz algebras using
their unique KMS states for the canonical gauge action.
However, we have found that this example, though illuminating,
is not generic. Here we study a more generic situation,
the example of $SU_q(2)$ using
the Haar state, which is KMS for a certain circle action
(which is not the gauge action).

The approach of \cite{CPR2} yields a local index formula in twisted
cyclic cohomology, where the twisting comes from the 
generator of the modular group of the KMS state.
The chief drawback of twisted cyclic theory, from the point of view of
index theory, is that it does not pair with $K$-theory. However, in
\cite{CPR2}
we showed that there was an abelian group, called modular $K_1$, which
pairs with twisted cyclic cohomology. 
It seems that in general we must understand
algebras that admit both traces
(not necessarily faithful) and KMS states.
The former situation, described in detail in Section 7 for $SU_q(2)$,
 exploits semifinite index theory, using
untwisted cyclic cohomology. The latter, in Section 8,
 uses the Haar state and the associated twisted cyclic
theory, and leads to a pairing with modular $K_1$. 
In both cases the pairing is given by computing spectral flow in the
semifinite sense as described in \cite{CP2}. This example
points to the existence of a 
rich interplay between the tracial and KMS index
theories.

Many of the constructions of this paper mirror those of
\cite{CPR,CPR2,PR}, and we give
precise references to those papers for more information about
our constructions and, where necessary,  for proofs. We focus here on
the new aspects of these  constructions that  $SU_q(2)$
throws up.

There are two motivations for this study. First there is the observation 
\cite{CLM} that there is a way to associate directed graphs to Mumford curves
and that from the corresponding graph $C^*$-algebras 
we might extract topological information about
the curves. It eventuates that $SU_q(2)$ is an example of a graph algebra
that shares with the Mumford curve graph algebras the property
that it does not admit faithful traces but does admit 
faithful KMS states for nontrivial circle actions. 
If we are going to be able to exploit graph algebras
to study invariants of Mumford curves then we need to demonstrate
that it is possible to actually calculate numerical invariants
explicitly. We find in Sections 7 and 8 that we are able to obtain
not only abstract formulae but also the numbers produced by these formulae for
particular unitaries in matrix algebras over 
$SU_q(2)$.

The second motivation comes from the general formula obtained 
in the main result in Section 8, namely Theorem 8.2. 
This Theorem shows that there are two contributions to spectral flow
in the twisted cocycle one of which
comes from truncated eta type correction terms. This example
points to the existence of a `twisted eta cocycle', a matter
we plan to investigate further in another place.

The novel feature of our approach is to make use of the structure of $SU_q(2)$
as a graph $C^*$-algebra
and a small part of its Hopf algebra structure via the
Haar state. We find that this is best described by using an
intermediate presentation
of the algebra in terms of generators
which are functions of the graph algebra generators.
The Haar state provides us with a natural faithful KMS state on $SU_q(2)$ which
we want to use because
any trace on $SU_q(2)$ cannot be faithful. This has the consequence
that any  Dixmier type trace on
$SU_q(2)$ will only see `part' of the algebra.

Nevertheless for non-faithful traces we can  calculate what the odd
semifinite local index formula in noncommutative geometry \cite{CPRS2}
tells us. More specifically we construct
a particular $(1,\infty)$ summable semifinite spectral triple for
$SU_q(2)$ in Section 7. Then in subsection 7.5 we
use ideas from \cite{CPR} to give some explicit computations which
are actually the result of pairings with the $K-$theory of a mapping cone
algebra constructed from $SU_q(2)$.
These calculations use general formulae for
spectral flow in von Neumann algebras found in
\cite{CP2, CPS2}. These pairings
yield rational functions of the deformation parameter $q$, in fact
$q$-numbers,  which are
naturally interpreted as $q$-winding numbers.

Then, in Section 8, we turn to the question of what information
we can extract from the Haar state or `twisted' situation (where
`twist' refers to twisted cyclic cohomology). 
We constructed a twisted cocycle and the pairing with what we termed
`modular K-theory' in \cite{CPR2}.
When applied to particular unitaries in matrix algebras
over  $SU_q(2)$ the  pairing of the twisted cocycle with modular $K_1$
gives a spectral flow invariant that is a polynomial function of the deformation parameter $q$.
These functions  are 
distinct from those obtained in the tracial case although they depend
on the same variables. We believe that the mapping cone plays a role here as well but much further investigation needs to be done to prove this.

We remark that our aims are different from 
those of 
\cite{ChP1,ChP2,DLSSV} where 
the quantum group structure of the algebra plays the main role
through the construction of equivariant spectral triples, 
some satisfying conditions related to the axioms for a
noncommutative spin manifold \cite{Co2}. In this paper
we ignore the quantum group structure of $SU_q(2)$ in order to explain
by example an `index theory for KMS states'.

{\bf Acknowledgements}. The first and last authors were supported by
the Australian Research Council. 
The last two authors were supported by Statens
Naturvidenskabelige Forskningsr{\aa}d, Denmark. We would like to thank
John Phillips, Piotr Hajac,
Ryszard Nest, David Pask, Aidan Sims, and Joe Varilly
for stimulating conversations on these topics.

\section{The $C^*$-Algebra  of $SU_q(2)$}
\label{sec:gralg}
In this Section we will describe the relationship between two
 descriptions of the algebra $C(SU_q(2))$; the `traditional'
$q$-deformation picture \cite{woronowicz},
and the graph algebra picture, \cite{HS}.
\subsection{The $q$ deformation picture of $SU_q(2)$}

We recall the construction of the algebra $A= SU_q(2).$
\begin{prop}\label{prop:suqdef}
  Given any $q \in [0,1)$ there is a unital $C^*$-algebra $A$ with
  elements $a,b$, $b$ normal,
  satisfying the relations
  \begin{equation}
    \label{eq:suq2rels}
    a^*a + b^*b = 1, aa^*+q^2bb^*=1, ab=qba, ab^*=qb^*a,
  \end{equation}
  with the following universal property.  Let $\Acc$ be the algebra
  $\CC[a,b]$ modulo the above relations.  Then every $*$-algebra homomorphism
from
  $A_{cc}$ to a $C^*$-algebra $B$ extends to a unique
  $C^*$-algebra homomorphism from $A$ to $B$.  In particular $\Acc$ is
  dense in $A$.
\end{prop}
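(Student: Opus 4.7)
The plan is the standard construction of a universal $C^*$-algebra for a finitely presented $*$-algebra, so most of the work is checking that the relations impose uniform bounds on representations, and then exhibiting at least one faithful representation to see that the universal seminorm is actually a norm.

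First I would establish an a priori bound on any $*$-representation $\pi$ of $A_{cc}$ on a Hilbert space $H$. From the relation $a^*a + b^*b = 1$ and the fact that $\pi(a^*a)$ and $\pi(b^*b)$ are positive operators summing to the identity, one gets $\pi(a^*a) \le 1$ and $\pi(b^*b) \le 1$, hence $\|\pi(a)\| \le 1$ and $\|\pi(b)\| \le 1$. Since $\Acc$ is the $*$-algebra generated by $a$ and $b$ modulo the given relations, every element $x \in \Acc$ can be written as a (noncommutative) polynomial expression in $a$, $a^*$, $b$, $b^*$, and therefore $\|\pi(x)\|$ is bounded by a constant $C(x)$ depending only on $x$ and not on $\pi$.

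Next I would define the universal seminorm
\ben
\|x\|_u := \sup\{\|\pi(x)\| : \pi \text{ is a } *\text{-representation of } \Acc\},
\een
which by the previous step is finite for every $x \in \Acc$. One checks in the usual way that $\|\cdot\|_u$ is a $C^*$-seminorm, let $I = \{x : \|x\|_u = 0\}$, and take $A$ to be the completion of $\Acc/I$ with respect to $\|\cdot\|_u$. The universal property then falls out of the construction: a $*$-homomorphism $\varphi : \Acc \to B$ into any $C^*$-algebra $B$ extends by continuity to $A$ because $\|\varphi(x)\|_B \le \|x\|_u$ (applying the above bound to any faithful representation of $B$), and uniqueness is automatic from density of the image of $\Acc$ in $A$.

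The remaining issue, and the only nontrivial step, is to show that $I = 0$ so that $\Acc$ actually embeds in $A$ and is therefore dense. For this I would exhibit one concrete faithful $*$-representation of $\Acc$: the standard Woronowicz representation of $SU_q(2)$ on $\ell^2(\NN) \otimes \ell^2(\ZZ)$ in which $a$ acts as a weighted shift with weights $\sqrt{1-q^{2n}}$ on the first factor and $b$ acts as $q^n$ times the shift on the second factor. A direct computation confirms the relations \eqref{eq:suq2rels}, and a monomial basis argument (the monomials $a^k b^m (b^*)^n$ and $(a^*)^k b^m (b^*)^n$ form a Poincar\'e--Birkhoff--Witt type basis of $\Acc$) shows that this representation is faithful. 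The main obstacle is therefore this faithfulness/PBW claim for $\Acc$; once it is in hand, both the density of $\Acc$ in $A$ and the universal property are immediate. Alternatively one may bypass this by invoking the isomorphism with the graph algebra presentation to be introduced in the next subsection, which carries its own faithful representation.
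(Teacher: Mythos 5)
The paper's own ``proof'' of this proposition is a one-line citation to Theorem 1.1 of Woronowicz, so there is no argument in the text to compare against; your sketch correctly reconstructs the standard argument behind that citation. The a priori bound $\|\pi(a)\|,\|\pi(b)\|\le 1$ from $a^*a+b^*b=1$, the universal $C^*$-seminorm, and the reduction of density of $\Acc$ to exhibiting one faithful representation are all sound, and you correctly isolate the only genuinely nontrivial step (faithfulness of the Woronowicz shift representation via the PBW-type basis of monomials $a^kb^m b^{*n}$, $a^{*k}b^m b^{*n}$), which is exactly the content of the cited theorem; since you defer that step rather than prove it, your proposal is, like the paper's, ultimately an appeal to the literature, but a correctly structured one.
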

\begin{proof}
  This is a restatement of \cite[Theorem 1.1]{woronowicz}.
\end{proof}

For the rest of this paper, $0\leq q<1$. However we observe that the
Haar state is not faithful for $q=0$, see Lemma \ref{lem:trgralg}
below, and so any statement relying on
the faithfulness of the Haar state requires $0<q<1$.
We will use the following $\ZZ^2$-grading of $A_{cc}$ in many places below.

\begin{prop}\label{prop:Agrad}
The algebra $\Acc$  has a $\ZZ^2$-grading so that
$
    \Acc = \bigoplus_{(m,n) \in \ZZ^2} \Acc[m,n].$
With respect to this
grading we have
$ \deg(a)=(-1,0),$ $ \deg(b)=(0,1).$
\end{prop}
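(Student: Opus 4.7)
The plan is to verify that each of the four defining relations of $\Acc$ is homogeneous with respect to the proposed bidegrees; then the ideal cut out by these relations in the free $*$-algebra on $a$, $b$ is a graded ideal, and the quotient $\Acc$ inherits the grading.

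First, since we want $\Acc$ to be a $*$-algebra graded by $\ZZ^2$, necessarily $\deg(a^*) = (1,0)$ and $\deg(b^*) = (0,-1)$. With these assignments, the two sphere relations
\begin{displaymath}
a^*a + b^*b = 1, \qquad aa^* + q^2 bb^* = 1
\end{displaymath}
are homogeneous of degree $(0,0)$, the commutation relation $ab = qba$ is homogeneous of degree $(-1,1)$, and $ab^* = qb^*a$ is homogeneous of degree $(-1,-1)$. So each relation lies in a single graded component of the free $*$-algebra on $a,b$, which makes the two-sided $*$-ideal they generate homogeneous. The quotient $\Acc$ then inherits the claimed decomposition $\Acc = \bigoplus_{(m,n) \in \ZZ^2} \Acc[m,n]$.

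A cleaner way to record the same fact, which I would probably prefer in the write-up, is to construct the grading via a $\TTrs^2$-action: the assignments $a \mapsto z^{-1}a$, $b \mapsto w b$ (for $(z,w) \in \TTrs^2$) preserve the relations of Proposition~\ref{prop:suqdef}, as one sees from the homogeneity check above. Hence by the universal property in Proposition~\ref{prop:suqdef} they extend to an action of $\TTrs^2$ on $A$ by $*$-automorphisms, which visibly preserves $\Acc$. Defining $\Acc[m,n] := \{ x \in \Acc : (z,w)\cdot x = z^m w^n x \text{ for all } (z,w) \in \TTrs^2\}$ gives the required decomposition by the usual Peter--Weyl / Fourier argument, and $a \in \Acc[-1,0]$, $b \in \Acc[0,1]$ by construction.

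There is no real obstacle: the only thing to verify is the homogeneity of the four relations, which is a two-line calculation. If anything is worth a remark, it is just the bookkeeping point that to speak of a $*$-grading one must use $\deg(a^*) = -\deg(a)$ and similarly for $b$, so that the involution is degree-reversing.
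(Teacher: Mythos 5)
Your proposal is correct; in fact the paper states Proposition~\ref{prop:Agrad} without any proof, so there is nothing to diverge from, and your homogeneity check of the relations (including $bb^*=b^*b$ for normality of $b$) is exactly the content that is needed. Your second, torus-action formulation is precisely the mechanism the paper itself uses in Proposition~\ref{prop:exthaar} to extend the grading to the $C^*$-completion $A$, so your write-up is consistent with, and fills a gap in, the paper's presentation.
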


It is well known, (we will amplify below), that for all
$0\leq q<1$, the $C^*$-algebras of $C(SU_q(2))$ are all
isomorphic. What changes with $q$ is the quantum group structure, and
this is captured, in part, by the Haar state.

\begin{prop}\label{prop:haar}
  For each $q\in[0,1)$, there is a state $h$, the Haar state,
such that for all $x \in
  \Acc[m,n]$, we have $h(x)= 0$ unless $m=n=0$ and
  and on $\Acc[0,0]$ the state $h$ is given by
  \begin{equation}
    \label{eq:haar}
    h(b^{*n}b^n) = (1-q^2)/(1-q^{2n+2}).
  \end{equation}
\end{prop}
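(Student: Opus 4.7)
Existence of the Haar state follows from Woronowicz's construction of the compact matrix quantum group $SU_q(2)$ \cite{woronowicz}: there is a unique state $h$ on $A$ satisfying the bi-invariance $(h\otimes\mathrm{id})\Delta = (\mathrm{id}\otimes h)\Delta = h(\cdot)1$, where $\Delta$ is the standard coproduct (with, e.g., $\Delta(b) = b\otimes a + a^*\otimes b$). I would take this as the starting point and verify the two asserted properties in turn.

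For the vanishing on bidegrees $(m,n)\neq(0,0)$, observe that the assignments $\beta_z(a)=za$, $\beta_z(b)=b$ and $\gamma_w(a)=a$, $\gamma_w(b)=wb$ (for $z,w\in\Trs$) respect the relations \eqref{eq:suq2rels} and hence extend to commuting circle actions on $A$. These actions are obtained by composing $\Delta$ with evaluation at points of the maximal torus of $SU_q(2)$, so bi-invariance of $h$ forces $h\circ\beta_z = h\circ\gamma_w = h$. On $x\in\Acc[m,n]$ one has $\beta_z(x) = z^{-m}x$ and $\gamma_w(x) = w^n x$, so $(z^{-m}w^n - 1)h(x)=0$ for every $(z,w)\in\TTrs$; this forces $h(x)=0$ unless $m=n=0$.

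For the values on $\Acc[0,0]$, I would apply the left-invariance to $b^{*(n+1)}b^{n+1}$. Expanding $\Delta(b^*)^{n+1}\Delta(b)^{n+1}$, commuting factors using $ab=qba$ and $ab^*=qb^*a$, and applying $h\otimes\mathrm{id}$, the bidegree vanishing of $h$ kills every summand whose first tensor factor is not of bidegree $(0,0)$. Reducing the surviving terms with $a^*a = 1 - b^*b$ and $aa^* = 1 - q^2bb^*$ produces a two-term recursion of the shape
\ben
(1-q^{2n+2})\,h(b^{*(n+1)}b^{n+1}) = (1-q^2)\,h(b^{*n}b^n),
\een
which, together with $h(1)=1$, gives $h(b^{*n}b^n) = (1-q^2)/(1-q^{2n+2})$ by induction.

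The main technical obstacle is the $q$-combinatorial bookkeeping in the expansion of $\Delta(b^*)^{n+1}\Delta(b)^{n+1}$: after the bidegree constraint has eliminated most terms, one must identify the finitely many survivors and gather their coefficients to isolate the recursion cleanly. A less painful alternative, which bypasses the bookkeeping entirely, is to realise $h$ in its GNS representation --- explicitly known for $SU_q(2)$ as a direct sum of the irreducible corepresentations whose matrix coefficients satisfy Schur-type orthogonality relations --- and to compute $h(b^{*n}b^n)$ directly as a convergent series in $q^2$ that sums to the stated closed form.
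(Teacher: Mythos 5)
Your overall strategy is sound, but it is genuinely different from what the paper does: the paper's proof is a two-line citation argument (existence and uniqueness of the invariant state from \cite[chapter 1]{woronowicz}, and then identification of its restriction to $\Acc$ with the explicitly computed invariant functional of \cite[section 4.3.2]{KS}, which already carries the bidegree vanishing and the formula \eqref{eq:haar}). You instead propose to carry out the derivation that the paper delegates to \cite{KS}. Your two main steps --- deducing the vanishing on $\Acc[m,n]$, $(m,n)\neq(0,0)$, from invariance under the left and right translations by the classical maximal torus, and extracting a recursion for $h(b^{*n}b^n)$ from invariance applied to powers of $b$ --- are exactly the standard ones, and they do work. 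What the paper's route buys is brevity and no $q$-combinatorics; what yours buys is self-containedness and, via the torus-action argument, a cleaner conceptual reason for the bidegree vanishing than "it is in \cite{KS}". Your GNS/Peter--Weyl alternative is also legitimate and is in fact very close in spirit to the computation the paper performs later in Lemma \ref{lem:trgralg}, where \eqref{eq:haar} is used to evaluate $h$ on the graph-algebra generators by summing a geometric series.

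One concrete correction: the displayed recursion is not consistent with \eqref{eq:haar}. With $h_n:=h(b^{*n}b^n)$ and $h_0=1$, your relation $(1-q^{2n+2})h_{n+1}=(1-q^2)h_n$ gives $h_1=1$, whereas \eqref{eq:haar} gives $h_1=(1-q^2)/(1-q^4)=1/(1+q^2)$. The recursion actually satisfied by the stated values is
\ben
(1-q^{2n+4})\,h_{n+1}=(1-q^{2n+2})\,h_n,
\een
which telescopes to $(1-q^{2n+2})h_n=(1-q^2)h_0$; your display appears to be this telescoped identity with the right-hand side mis-indexed. Since you flag the $q$-combinatorial bookkeeping as the unfinished part, this is the point at which it must be done carefully.
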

\begin{proof}
  The existence of a unique invariant (with respect to the coproduct)
  state is given in \cite[chapter 1]{woronowicz}.  This coproduct
  restricts to the algebraic coproduct on $A_{cc}$ and so the
  restriction of this state to $A_{cc}$ is invariant with respect to
  the algebraic coproduct.  But the existence of, and the above formula for,
a unique invariant
  functional on $A_{cc}$ is given in \cite[section 4.3.2]{KS}.
\end{proof}


We now extend Propositions \ref{prop:Agrad} and \ref{prop:haar} to the
\Cstar-algebra $A$.
\begin{prop}\label{prop:exthaar}
  The $\ZZ^2$-grading of $\Acc$ extends to a $\ZZ^2$-grading of $A$ and the Haar state
  satisfies $h(x) = 0$ for all $x \in A[m,n]$ with $(m,n) \neq (0,0)$.
\end{prop}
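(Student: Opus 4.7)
The strategy is to realise the $\ZZ^2$-grading of $A$ as the spectral subspace decomposition of a strongly continuous $\TTrs$-action, and to deduce the Haar state vanishing from the averaging inherent in this action. First I would define on generators the assignment $\alpha_{(s,t)}(a) = s^{-1}a$ and $\alpha_{(s,t)}(b) = tb$ for $(s,t)\in\TTrs$. The relations \eqref{eq:suq2rels} are homogeneous with respect to the bidegrees recorded in Proposition \ref{prop:Agrad}, so $\alpha_{(s,t)}$ is a well-defined $*$-automorphism of $\Acc$. Composing with the inclusion $\Acc\hookrightarrow A$ and applying the universal property from Proposition \ref{prop:suqdef}, each $\alpha_{(s,t)}$ extends to a unique $*$-automorphism of $A$. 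The group law $\alpha_{(s,t)}\alpha_{(s',t')}=\alpha_{(ss',tt')}$ holds on $\Acc$ by construction and therefore on all of $A$ by density.

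Second, I would establish strong continuity of $\alpha:\TTrs\to\mathrm{Aut}(A)$. For $x\in\Acc$ the map $(s,t)\mapsto\alpha_{(s,t)}(x)$ is manifestly norm continuous, being a polynomial in the scalars $s^{\pm 1},t^{\pm 1}$; since each $\alpha_{(s,t)}$ is isometric and $\Acc$ is dense in $A$, a standard $\varepsilon/3$ argument extends continuity to every $x\in A$. This lets me form the Fourier projections
\[ \Phi_{m,n}(x)=\int_{\TTrs}s^{m}t^{-n}\alpha_{(s,t)}(x)\,\frac{ds\,dt}{(2\pi)^{2}}, \]
which are contractive linear maps of $A$ onto $A[m,n]:=\{x\in A:\alpha_{(s,t)}(x)=s^{-m}t^{n}x\text{ for all }(s,t)\in\TTrs\}$. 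A standard Ces\`aro/Fej\'er argument then shows that $A$ is the closed linear span of the $A[m,n]$, and since $\Phi_{m,n}$ preserves the $\alpha$-invariant subalgebra $\Acc$ and there agrees with the projection onto $\Acc[m,n]$, the subspaces $A[m,n]$ genuinely extend the algebraic grading.

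Third, for the vanishing of $h$ on $A[m,n]$ with $(m,n)\neq(0,0)$, I would argue by density. Given $x\in A[m,n]$, choose $x_k\in\Acc$ with $x_k\to x$ in norm. Then $\Phi_{m,n}(x_k)\in\Acc[m,n]$ because $\Acc$ is $\alpha$-invariant, so $h(\Phi_{m,n}(x_k))=0$ by Proposition \ref{prop:haar}. Continuity of $\Phi_{m,n}$ combined with $x\in A[m,n]$ gives $\Phi_{m,n}(x_k)\to\Phi_{m,n}(x)=x$, and continuity of the state $h$ then yields $h(x)=0$. The one place where care is required is the very first step: the $\TTrs$-action a priori exists only on $\Acc$, and its extension to $A$ rests squarely on the universal property of Proposition \ref{prop:suqdef}. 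Everything after that, including strong continuity and the Fej\'er-type reconstruction of the grading, is routine.
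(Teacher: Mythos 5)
Your argument is correct and follows essentially the same route as the paper: the same torus action defined on generators and extended by the universal property of Proposition \ref{prop:suqdef}, the same $\varepsilon/3$ argument for strong continuity, the same Fourier projections $\Phi_{m,n}$, and the same density argument for the vanishing of $h$ off the $(0,0)$ component (a step the paper leaves implicit but which you spell out correctly). The only differences are cosmetic: the paper avoids any Fej\'er/Ces\`aro summation by simply observing that $A_{cc}$ is dense and each of its elements is a finite sum of homogeneous terms, and your stated eigenvalue convention for $A[m,n]$ has the sign of the first index flipped relative to $\deg(a)=(-1,0)$, which is harmless since the $(0,0)$ subspace is unaffected.
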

\begin{proof}
  We first define a function $\gamma :\TTrs \times A \to A$ by
  \begin{equation}
    \label{eq:gammaDefn}
    \gamma_{z,w}(a) = z^{-1}a, \qquad \gamma_{z,w}(b) = wb
  \end{equation}
and extend $\gamma_{z,w}$ as a $*$-homomophism.
  It is clear that for all $(z,w)\in\TTrs$ the elements $z^{-1}a,wa$
  satisfy the same relations as $a,b$, and so the above defines
  $\gamma_{z,w}$ uniquely as an algebra homomorphism from $\Acc$ to
  $A$.
 Using Proposition
  \ref{prop:suqdef}, $\gamma_{z,w}$ is uniquely defined as a
  \Cstar-homomorphism from $A$ to itself.  It is easy to check that
  $\gamma$ is a group action.  To show it is strongly continuous,
  first note that by definition $\gamma$ acts on $\Acc$ strongly
  continuously.  Let $x \in A$ and $\epsilon > 0$.  Since $\Acc$ is
  dense in $A$, we can choose some $y
  \in \Acc$ such that $\|x-y\| < \epsilon/3$.  Since $\gamma$ acts
  by \Cstar-algebra homomorphisms, this implies
 $   \| \gamma_{z,w}(x) - \gamma_{z,w}(y)\| < \epsilon/3,
 $ for all $(z,w) \in \TTrs$.  Then for $(z,w)$ sufficiently close to
  $(0,0)$, we have $\| y - \gamma_{z,w}(y)\| < \epsilon/3$.
  Combined with the other two inequalities this gives,
  \begin{equation*}
    \| x - \gamma_{z,w}(x) \| \leq \| x - y \| + \| y -
    \gamma_{z,w}(y) \| + \| \gamma_{z,w}(x) - \gamma_{z,w}(y) \| <
\epsilon.
  \end{equation*}

  Since $\TTrs$ is compact,
we can construct the following operators  $\Phi_{m,n}:A\to A$,
  \begin{equation}\label{eq:PhimnDef}
    \Phi_{m,n}(x) := (2\pi)^{-2}\int_0^{2\pi}\int_0^{2\pi}
z^{-m}w^{-n}\gamma_{z,w}(x)
    d\theta d\phi, \qquad z=e^{i \theta},w=e^{i \phi},
  \end{equation}
  for $(m,n)\in\ZZ^2$.  First note that for all $m,n$ the map $\Phi_{mn}$ is
  continuous since
  \begin{equation*}
    \| \Phi_{mn}(x) \| \leq (2\pi)^{-2}  \int_0^{2\pi}\int_0^{2\pi} \|
z^{-m}w^{-n}\gamma_{z,w}(x) \|
    d\theta d\phi = \|x\|
  \end{equation*}
  since $\gamma$ acts by isometries.   Furthermore, by a change of variables in
  the defining integral we have
    \begin{equation*}
    \gamma_{z,w}\Phi_{m,n}(x) = z^m w^n \Phi_{m,n}(x).
  \end{equation*}
  From this it follows that
$\Phi_{m,n}\Phi_{m',n'} = \delta_{m,m'}\delta_{n,n'}\Phi_{m,n}$, 
Since the $\Phi_{m,n}$ are continuous
projections, $A[m,n]:=\Phi_{m,n}A$ is a closed subspace of $A$.
Further,
  we have $A = \oplus A[m,n]$ because $A_{cc}$ is dense in $A$, while every
  element of $A_{cc}$ is a finite sum of elements of pure degree.
  Finally, given $x \in A[m,n]$ and $y \in A[m',n']$ we have
$\gamma_{z,w}(xy) = z^{m+m'}w^{n+n'}xy$
  This implies $xy \in A[m+m',n+n']$.  Therefore $A = \oplus A[m,n]$ is an
algebra
  grading of $A$.
\end{proof}
\subsection{$SU_q(2)$ as a graph algebra}

By a directed graph we mean a quadruple $E=(E^0,E^1,r,s)$ where
$E^0$ and $E^1$ are countable sets which we call the vertices and
edges of $E$ and $r,s$ are maps from $E^1$ to $E^0$ which we call
the range and source maps respectively.    We call a vertex $v$ a sink if $s^{-1}(v)$  is empty.

A Cuntz-Krieger $E$-family in $C^*$-algebra $B$ is a set of
mutually orthogonal projections $\{ p_v : v \in E^0 \}$ and a set of partial
isometries $\{ S_e : e \in E^1 \}$ satisfying the Cuntz-Krieger
relations:
\begin{equation}\label{eq:KKrels}
    S_e^*S_e = p_{r(e)} \text{ for $e \in E^1$ and }
    p_v = \sum_{s(e)=v} S_eS_e^* \text{ whenever $v$  is  not
    a sink.}
\end{equation}
There is a universal $C^*$ algebra $C^*(E)$ generated by a non-zero
Cuntz-Krieger $E$-family $\{ S_e,p_v\}$, see for instance \cite[Theorem
1.2]{graphalgbasic} or \cite{R}.  More precisely, we have
\begin{prop}
  For every row finite directed graph $E$,
there is a \Cstar-algebra $C^*(E)$ containing a Cuntz-Kreiger
  $E$-family $\{S_e,p_v\}$, with the property that for every
  \Cstar-algebra $A$ containing a Cuntz-Kreiger $E$-family
  $\{S'_e,p'_v\}$, there is a unique \Cstar-algebra homomorphism from
  $C^*(E)$ to $A$ that maps $\{S_e,p_v\}$ to $\{S'_e,p'_v\}$.
\end{prop}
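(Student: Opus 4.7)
The plan is to construct $C^*(E)$ as the enveloping $C^*$-algebra of the universal $*$-algebra generated by symbols $\{s_e,q_v : e\in E^1, v\in E^0\}$ subject to the Cuntz-Krieger relations \eqref{eq:KKrels} together with the $*$-algebra relations forcing the $q_v$ to be self-adjoint idempotents and the $s_e$ to satisfy $s_es_e^*s_e = s_e$. Call this $*$-algebra $\AAA_E$. Every element of $\AAA_E$ is a finite linear combination of monomials $s_\mu s_\nu^*$ with $\mu,\nu$ finite paths in $E$ with $r(\mu)=r(\nu)$ (the standard path-reduction argument using the relations \eqref{eq:KKrels}).

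Next I would define, for $x\in\AAA_E$,
\begin{equation*}
\|x\| := \sup\{\|\pi(x)\|_B : \pi:\AAA_E\to B \text{ is a $*$-hom into a $C^*$-algebra $B$ sending generators to a Cuntz-Krieger $E$-family}\}.
\end{equation*}
To see this supremum is finite, note that in any Cuntz-Krieger representation each $s_e$ is a partial isometry (since $s_e^*s_e$ is a projection), hence has norm at most $1$, and each $q_v$ is a projection, hence also has norm at most $1$; so every monomial $s_\mu s_\nu^*$ has norm at most $1$ and $\|x\|\le \sum|c_\mu|$ for $x=\sum c_\mu s_\mu s_\nu^*$. Thus $\|\cdot\|$ is a well-defined $C^*$-seminorm on $\AAA_E$. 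Quotienting by its kernel and completing produces a $C^*$-algebra in which the images of $\{s_e,q_v\}$ form a Cuntz-Krieger $E$-family, which I take as $C^*(E)$ with generators $\{S_e,p_v\}$.

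The main obstacle is showing that this seminorm is actually a norm, equivalently that some Cuntz-Krieger $E$-family is nontrivial, i.e.\ that $p_v\ne 0$ for each $v$. For this I would exhibit a concrete Cuntz-Krieger $E$-family on the path-space Hilbert space $\HH_E=\ell^2(E^*)$ spanned by orthonormal vectors $\{\xi_\mu\}$ indexed by finite paths $\mu$ in $E$: define
\begin{equation*}
S_e \xi_\mu = \begin{cases} \xi_{e\mu} & \text{if } r(e)=s(\mu)\\ 0 & \text{otherwise}\end{cases}, \qquad p_v \xi_\mu = \begin{cases} \xi_\mu & \text{if } s(\mu)=v\\ 0 & \text{otherwise}\end{cases}.
\end{equation*}
A direct check shows these satisfy the relations in \eqref{eq:KKrels} (using row-finiteness so that the sum in the second relation is finite, hence strongly convergent), and each $p_v$ is a nonzero projection since $v$ itself (as a path of length zero) contributes a nonzero vector.

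Finally, the universal property is essentially built into the construction: given any Cuntz-Krieger $E$-family $\{S'_e,p'_v\}$ in a $C^*$-algebra $A$, the universal property of $\AAA_E$ (as a $*$-algebra modulo relations) produces a $*$-homomorphism $\pi_0:\AAA_E\to A$ sending generators to primed generators; by definition of $\|\cdot\|$ we have $\|\pi_0(x)\|\le\|x\|$, so $\pi_0$ descends through the quotient and extends continuously to $C^*(E)\to A$. Uniqueness is immediate because $C^*(E)$ is the closure of the span of the monomials in its generators. The one point requiring care, and where the standard argument of \cite{graphalgbasic,R} would be cited, is the row-finiteness hypothesis entering through the strong convergence of the sum $\sum_{s(e)=v}S_eS_e^*$ on $\HH_E$.
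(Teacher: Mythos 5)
Your overall strategy (form the universal $*$-algebra on the generators, take the supremum $C^*$-seminorm over all Cuntz--Krieger representations, and exhibit one concrete family with every $p_v\neq 0$ so that the seminorm does not collapse) is exactly the standard route of \cite{graphalgbasic,R}; the paper itself gives no proof of this proposition and simply cites those references. The routine parts of your argument are fine: every word in the generators is a product of partial isometries and projections, hence has norm at most $1$ in any representation, which already gives finiteness of the supremum without needing the $S_\rho S_\sigma^*$ spanning result at the purely algebraic level of $\AAA_E$.

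However, the one step that carries the real content --- producing a nonzero Cuntz--Krieger $E$-family --- fails as written. On $\ell^2(E^*)$ with basis indexed by \emph{all} finite paths, your operators do satisfy $S_e^*S_e=p_{r(e)}$, but they violate the second relation $p_v=\sum_{s(e)=v}S_eS_e^*$ at every non-sink $v$: the length-zero path $v$ gives a basis vector $\xi_v$ with $p_v\xi_v=\xi_v$, while $S_e^*\xi_v=0$ for every edge $e$ because $\xi_v$ is not of the form $\xi_{e\mu}$; hence $\sum_{s(e)=v}S_eS_e^*\xi_v=0\neq p_v\xi_v$. The defect is not cured by discarding length-zero paths: the second relation forces every basis path based at a non-sink to stay in the index set after its first edge is stripped, and iterating shows the only admissible finite paths are those whose range is a sink. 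For the $SU_q(2)$ graph of Figure 1, which has no sinks, your Hilbert space would then be zero --- so the construction fails precisely in the case this paper needs. The standard repair is to represent on the boundary path space, namely $\ell^2$ of the set of infinite paths together with the finite paths whose range is a sink, with the same formulas; for a row-finite graph every vertex is the source of at least one such path, so all $p_v\neq0$, and the stripping argument above now closes up. With that single replacement the rest of your proof goes through.
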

Denote by $\paths$ the set of finite directed paths in $E$.  We can extend
the range and source maps from $E^1$ to $\paths$.
Given a path $\rho = e_1e_2\cdots e_k$, we denote by $S_\rho\in C^*(E)$ the
partial isometry $S_{e_1}S_{e_2}\cdots S_{e_k}$. Some results we will
use from the theory of graph $C^*$-algebras are
\begin{prop}[\cite{graphalgbasic,R}]\label{prop:densmonom1}
  The algebra $C^*(E)$ is densely spanned by the monomials of the form
  $S_\rho S_\sigma^*$ where $\rho,\sigma$ are paths in $\paths$ with
  $r(\rho)=r(\sigma)$ which form a subalgebra denoted by
$A_c$.
\end{prop}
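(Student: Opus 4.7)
My plan is to show that $A_c$, the span of monomials $S_\rho S_\sigma^*$ with $r(\rho)=r(\sigma)$, is a *-subalgebra of $C^*(E)$ containing the Cuntz--Krieger generators; since $C^*(E)$ is generated by this family, the norm-closure of $A_c$ will then equal $C^*(E)$, giving density.

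Using the standard convention that each $v\in E^0$ is a length-zero path with $S_v := p_v$ and $r(v)=s(v)=v$, the projections $p_v = S_v S_v^*$ and edge partial isometries $S_e = S_e p_{r(e)} = S_e S_{r(e)}^*$ all lie in $A_c$. The involution stability $(S_\rho S_\sigma^*)^* = S_\sigma S_\rho^*$ is immediate from the definition.

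The essential content is closure under multiplication, which reduces to proving the following cancellation lemma: $S_\sigma^* S_\mu$ vanishes unless one of $\sigma,\mu$ is an initial segment of the other, in which case it equals $p_{r(\sigma)}$ (if $\sigma = \mu$), $S_{\mu'}$ (if $\mu = \sigma\mu'$), or $S_{\sigma'}^*$ (if $\sigma = \mu\sigma'$). I would prove this by induction on the minimum of $|\sigma|$ and $|\mu|$, the base case being the edge identity $S_e^* S_f = \delta_{e,f} p_{r(e)}$. Equality when $e=f$ is the first Cuntz--Krieger relation; for $e \neq f$ with common source $v$, mutual orthogonality of the range projections $\{S_e S_e^*\}_{s(e)=v}$, encoded in the second Cuntz--Krieger relation, together with the partial isometry identities $S_e^* = S_e^*(S_e S_e^*)$ and $S_f = (S_f S_f^*) S_f$, gives $S_e^* S_f = S_e^*(S_e S_e^*)(S_f S_f^*) S_f = 0$; for distinct sources one uses $S_e^* = S_e^* p_{s(e)}$ and orthogonality of the vertex projections. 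Substituting the three surviving cases back into $(S_\rho S_\sigma^*)(S_\mu S_\nu^*)$ yields either zero or an element $S_{\rho'} S_{\nu'}^*$ with $r(\rho')=r(\nu')$, showing $A_c$ is closed under products. The main obstacle is really just the careful bookkeeping in the cancellation lemma; once this is in hand, density of $A_c$ follows by the soft observation that its norm-closure is a $C^*$-subalgebra containing the generating Cuntz--Krieger family and must therefore equal $C^*(E)$.
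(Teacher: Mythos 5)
Your proof is correct and follows essentially the same route as the paper, which offers no argument of its own but simply cites \cite{graphalgbasic,R}, where the standard proof is exactly yours: the cancellation lemma for $S_\sigma^*S_\mu$ (zero unless one path extends the other) is the key step, and density then follows because the closure of the resulting $*$-subalgebra contains the generating Cuntz--Krieger family.
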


\begin{prop}[\cite{graphalgbasic,R}]
  If $\rho,\sigma$ are paths of equal length then
  \begin{equation}
    \label{eq:graphcalc0}
    S_\rho^*S_\sigma = \delta_{\rho,\sigma}p_{r(\rho)}.
  \end{equation}
\end{prop}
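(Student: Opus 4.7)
The plan is to prove this by induction on the common length $n$ of the paths $\rho$ and $\sigma$, using the Cuntz-Krieger relations \eqref{eq:KKrels} directly for $n=1$ and peeling off leading edges in the inductive step.

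For the base case $n=1$, write $\rho = e$ and $\sigma = f$ for edges. The three subcases are: (i) $e=f$, where $S_e^*S_e = p_{r(e)}$ is the first Cuntz-Krieger relation; (ii) $e \neq f$ with $s(e) = s(f) = v$, where the projections $\{S_g S_g^* : s(g)=v\}$ are pairwise orthogonal since they are summands of the projection $p_v$, so $(S_e S_e^*)(S_f S_f^*) = 0$, and then using the partial isometry identity $S_e^* = S_e^* S_e S_e^*$ and similarly $S_f = S_f S_f^* S_f$ on the right yields $S_e^* S_f = S_e^*(S_e S_e^*)(S_f S_f^*)S_f = 0$; (iii) $s(e) \neq s(f)$, where the relation $p_v = \sum_{s(g)=v} S_g S_g^*$ implies $p_{s(e)} S_e = S_e$ (and dually $S_e^* = S_e^* p_{s(e)}$), so $S_e^* S_f = S_e^* p_{s(e)} p_{s(f)} S_f = 0$ by orthogonality of the vertex projections.

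For the inductive step, write $\rho = e_1\rho'$ and $\sigma = f_1\sigma'$ with $\rho',\sigma'$ of length $n$, so that $S_\rho^* S_\sigma = S_{\rho'}^* (S_{e_1}^* S_{f_1}) S_{\sigma'}$. By the base case this vanishes unless $e_1 = f_1$, in which case it equals $S_{\rho'}^* p_{r(e_1)} S_{\sigma'}$. Since $s(\sigma') = r(f_1) = r(e_1)$, the same vertex absorption used in the base case gives $p_{r(e_1)} S_{\sigma'} = S_{\sigma'}$, reducing to $S_{\rho'}^* S_{\sigma'}$; the inductive hypothesis then yields $\delta_{\rho',\sigma'} p_{r(\rho')} = \delta_{\rho',\sigma'} p_{r(\rho)}$, and combining the two Kronecker factors gives $\delta_{\rho,\sigma} p_{r(\rho)}$ as required.

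The only delicate point, rather than an obstacle, is the repeated use of the identity $S_e = p_{s(e)} S_e$, which requires the source of an edge never to be a sink; this is built into the definition of a sink in the excerpt ($s^{-1}(v) = \emptyset$), so there is no issue. Everything else is bookkeeping that follows mechanically from \eqref{eq:KKrels} and the partial isometry relations $S_e = S_e S_e^* S_e$.
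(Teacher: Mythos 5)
Your induction is correct, and each step (orthogonality of the range projections under a common vertex projection, the absorption identities $S_e=S_eS_e^*S_e=p_{s(e)}S_e$, and the peeling-off of leading edges) is justified from the Cuntz--Krieger relations alone. The paper offers no proof of its own here --- it simply cites \cite{graphalgbasic,R} --- and your argument is essentially the standard one found in those references, so there is nothing to reconcile; the only (harmless) omission is the trivial length-zero case $p_v^*p_w=\delta_{v,w}p_v$.
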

We now specialise to the graph algebra presentation of the
$C^*$-algebra of $SU_q(2)$, $0\leq q<1$ which is
row-finite, that is the set $s^{-1}(v):=\{ e: s(e) = v\}$ is finite  for all
$v \in E^0$.
\begin{defn}
  Let $B$ be the Cuntz-Kreiger algebra associated to the graph
  given in Figure 1.  The vertex set $E^0$ is $\{v,w\}$ and
the edge set is $\{\mu,\nu,\xi\}$.
\end{defn}
\vspace{-4pt}
\[\qquad\qquad
\beginpicture

\setcoordinatesystem units <1cm,1cm>

\setplotarea x from 0 to 12, y from -0.9 to 0.5

\circulararc 325 degrees from 4.0 0.2 center at 3.4 0
\put{$\bullet$} at 4 0
\put{$\bullet$} at 6 0

\put{$\xi$} at 7.5 0
\put{$\mu$} at 2.3 0
\put{$\nu$} at 5 0.3
\put{$v$} at 4.2 -0.2
\put{$w$} at 5.8 -0.2
\put{Figure 1} at 5.1 -0.9

\circulararc -325 degrees from 6 0.2 center at 6.6 0

\arrow <0.25cm> [0.2,0.5] from 4.2 0 to 5.8 0

\arrow <0.25cm> [0.2,0.5] from 7.228 0.1 to 7.226 -0.1

\arrow <0.25cm> [0.2,0.5] from 2.772 0.1 to 2.774 -0.1

\endpicture
\]
\smallskip
The particular Cuntz-Krieger relations here are
\begin{equation}
  \label{eq:gralg}
  S_\mu^*S_\mu=S_\nu S_\nu^*+S_\mu S_\mu^*=p_v,\qquad
S_\xi^*S_\xi=S_\nu^*S_\nu=S_\xi S_\xi^*=p_w.
\end{equation}
\begin{prop}[\cite{HS}]
  There is an isomorphism of \Cstar-algebras $\phi : A \to B$
  satisfying
  \begin{eqnarray}
    \phi(a) &=& \sum_{k=0}^\infty
\left(\sqrt{1-q^{2(k+1)}}-\sqrt{1-q^{2k}}\right) (S_\mu + S_\nu)^k
    (S_\mu^* + S_\nu^*)^{k+1} \label{eq:cstarisomb}\\
    \phi(b) &=& \sum_{k=0}^\infty q^{k}(S_\mu + S_\nu)^k S_\xi
    (S_\mu^* + S_\nu^*)^{k} \label{eq:cstarisom}\
  \end{eqnarray}
\end{prop}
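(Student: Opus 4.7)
The plan is to define elements $\alpha,\beta \in B$ by the right-hand sides of \eqref{eq:cstarisomb} and \eqref{eq:cstarisom}, verify they satisfy \eqref{eq:suq2rels}, invoke the universal property of Proposition \ref{prop:suqdef} to obtain $\phi: A \to B$, and then establish bijectivity. Convergence rests on the observation that $U := S_\mu+S_\nu$ is an isometry: the orthogonality $S_\mu^* S_\nu = S_\nu^* S_\mu = 0$ from \eqref{eq:graphcalc0} gives $U^* U = p_v + p_w = 1$, while \eqref{eq:gralg} gives $U U^* = p_v$. Hence $\|U^k\| \le 1$ for all $k$, so the series for $\beta$ converges absolutely from the explicit $q^k$ decay, and that for $\alpha$ from $|c_k| = O(q^{2k})$, where $c_k := \sqrt{1-q^{2(k+1)}} - \sqrt{1-q^{2k}}$.

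Verifying \eqref{eq:suq2rels} is the main technical step and is where the specific choice of coefficients enters. The tools are the identity $(U^*)^k U^l = U^{l-k}$ for $l \ge k$ (respectively $(U^*)^{k-l}$ for $l \le k$), coming from $U^*U = 1$, together with $S_\xi U = U^* S_\xi = 0$ and $S_\xi S_\xi^* = S_\xi^* S_\xi = p_w$. These immediately yield $\beta\beta^* = \beta^*\beta = \sum_k q^{2k} U^k p_w (U^*)^k$, so $\beta$ is normal. For $\alpha\alpha^* + q^2 \beta\beta^* = 1$, the double sum in $\alpha\alpha^*$ collapses by indexing on $n := \max(k,l)$; the telescoping $\sum_{j=0}^n c_j = \sqrt{1-q^{2(n+1)}}$ reduces the coefficient of $U^n(U^*)^n$ to $q^{2n}(1-q^2)$. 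Splitting $U^n(U^*)^n = U^n p_v (U^*)^n + U^n p_w (U^*)^n$, the $p_w$ parts combine with $q^2\beta\beta^*$ to give $\sum_n q^{2n} U^n p_w (U^*)^n$, while the $p_v$ parts, after inserting the Cuntz--Krieger expansion $U^n p_v(U^*)^n = p_v - \sum_{k<n} S_{\mu^k\nu}S_{\mu^k\nu}^*$, telescope a second time to $p_v - \sum_k q^{2(k+1)} S_{\mu^k\nu} S_{\mu^k\nu}^*$; the two pieces cancel, leaving $p_v + p_w = 1$. The relation $\alpha^*\alpha + \beta^*\beta = 1$ is analogous. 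The commutation $\alpha\beta = q\beta\alpha$, after using $S_\xi U = U^*S_\xi = 0$ to annihilate off-diagonal terms, reduces to the algebraic identity $c_k + \sqrt{1-q^{2k}} = \sqrt{1-q^{2(k+1)}}$; the relation $\alpha\beta^* = q\beta^*\alpha$ follows symmetrically.

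Proposition \ref{prop:suqdef} then supplies $\phi: A \to B$ with $\phi(a) = \alpha$ and $\phi(b) = \beta$. For bijectivity I would construct an inverse. The projections $U^k p_w(U^*)^k$ are mutually orthogonal with eigenvalue $q^{2k}$ in $\beta\beta^*$, so $p_w$ is the spectral projection of $\beta\beta^*$ at the isolated eigenvalue $1$ and lies in $\phi(A)$ by continuous functional calculus. Since $U^* p_w = 0$, only the $k = 0$ summand of $\beta$ survives right-multiplication by $p_w$, giving $S_\xi = \beta p_w \in \phi(A)$. Similarly $p_w \alpha = \sqrt{1-q^2}\, S_\nu^*$ places $S_\nu \in \phi(A)$. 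For $S_\mu$: the $n=0$ term in $\alpha\alpha^* = \sum_n q^{2n}(1-q^2)U^n(U^*)^n$ shows $\alpha\alpha^* \ge (1-q^2)\cdot 1$, so $\alpha\alpha^*$ is invertible, and the factorization $\alpha U = (\alpha\alpha^*)^{1/2}$ (since $\alpha U = \sum_k c_k U^k(U^*)^k$ is positive and squares to $\alpha\alpha^*$ by the same telescoping as above) yields $U^* = (\alpha\alpha^*)^{-1/2}\alpha \in \phi(A)$, hence $S_\mu = U - S_\nu \in \phi(A)$. The Cuntz--Krieger $E$-family thus located inside $\phi(A)$ produces, via the universal property of $B = C^*(E)$, a $*$-homomorphism $\psi: B \to A$; since $\phi\circ\psi$ and $\psi\circ\phi$ fix generators they are identities. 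Alternatively, injectivity of $\psi$ follows from the gauge-invariant uniqueness theorem for graph $C^*$-algebras applied with the subaction $\gamma_{z,1}$ of \eqref{eq:gammaDefn} on $A$.

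The main obstacle is organizing the nested telescopings underlying $\alpha\alpha^* + q^2\beta\beta^* = 1$: the coefficients $c_k$ are tuned precisely so that the $\max(k,l)$-bookkeeping in $\alpha\alpha^*$ and the Cuntz--Krieger expansion of $U^n p_v(U^*)^n$ collapse in concert to produce the right-hand side.
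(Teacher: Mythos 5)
The paper gives no proof of this proposition at all --- it is simply quoted from \cite{HS} --- so your argument must be judged on its own terms rather than against an in-paper argument. The forward half is correct and essentially complete: $U=S_\mu+S_\nu$ is an isometry with $UU^*=p_v$, both series converge, the double telescoping you describe does verify $\alpha\alpha^*+q^2\beta\beta^*=1$, the remaining relations of \eqref{eq:suq2rels}, and normality of $\beta$, and Proposition \ref{prop:suqdef} then produces $\phi$. Your surjectivity argument is also sound: $p_w$ is the spectral projection of $\beta\beta^*$ at the isolated eigenvalue $1$, $S_\xi=\beta p_w$, $S_\nu^*=(1-q^2)^{-1/2}p_w\alpha$, and $U^*=(\alpha\alpha^*)^{-1/2}\alpha$ (add the one-line check $\alpha UU^*=\alpha$, which upgrades the left-inverse identity $(\alpha\alpha^*)^{-1/2}\alpha U=1$ to the claimed equality with $U^*$).

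The gap is in injectivity. The Cuntz--Krieger family you have ``located'' lives in $\phi(A)\subseteq B$; it is just $\{S_\mu,S_\nu,S_\xi,p_v,p_w\}$ again, and a Cuntz--Krieger family inside $B$ only gives a map $B\to B$. To obtain $\psi:B\to A$ you must transport your formulas back to $A$: set $\tilde p_w:=\chi_{\{1\}}(bb^*)$, $\tilde S_\xi:=b\tilde p_w$, $\tilde S_\nu:=(1-q^2)^{-1/2}a^*\tilde p_w$, $\tilde U^*:=(aa^*)^{-1/2}a$, $\tilde S_\mu:=\tilde U-\tilde S_\nu$, and then verify the Cuntz--Krieger relations for these elements using only \eqref{eq:suq2rels}, normality of $b$, and the spectral computation $\spec(b^*b)=\{0\}\cup\{q^{2k}\}$. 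That verification is genuinely different from the one you performed in $B$: you cannot deduce it from the fact that the $\phi$-images satisfy the relations without assuming the injectivity you are trying to prove. Moreover, even once $\psi$ exists and $\phi\circ\psi=\mathrm{id}_B$ is checked on generators, the assertion that $\psi\circ\phi$ ``fixes generators'' is precisely the nontrivial claim that $a=\sum_k c_k\tilde U^k(\tilde U^*)^{k+1}$ and $b=\sum_k q^k\tilde U^k\tilde S_\xi(\tilde U^*)^k$ hold \emph{in $A$}; this does not follow from the corresponding identities holding after applying $\phi$. It can be proved --- e.g.\ from $bb^*\tilde U=q^2\tilde U bb^*$ one gets $\tilde U^k\tilde p_w(\tilde U^*)^k\leq\chi_{\{q^{2k}\}}(bb^*)$, with equality forced because both families of projections sum to $1$ --- but some such argument must be supplied. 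Finally, the ``alternative'' is misdirected on two counts: the gauge-invariant uniqueness theorem would give injectivity of $\psi$, which you already get for free from $\phi\circ\psi=\mathrm{id}_B$, whereas injectivity of $\phi$ requires \emph{surjectivity} of $\psi$, i.e.\ exactly the two series identities above; and the relevant circle action is the diagonal $\gamma_{z,z}$, which scales all of $\tilde S_\mu,\tilde S_\nu,\tilde S_\xi$ by $z$, not $\gamma_{z,1}$, which fixes $\tilde S_\xi$ and is therefore not the gauge action.
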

By this isomorphism we identify $A$ and $B$ from now on, and use the
  letter $A$ for both. We keep $A_{cc}$ for the polynomials in $a,b$
  (the generators in the $SUq(2)$ picture) and $A_c$ for the
  dense subalgebra of polynomials in the graph algebra generators.
Note that elements of $A_c$ are not in general polynomials
in the generators $a,b$ of $C(SU_q(2))$, and conversely.
Next we prove $A_c$ is a graded subalgebra of $A$.
\begin{prop}\label{prop:deggraph}
  The elements $S_\mu,S_\nu$ have pure degree $(1,0)$ while $S_\xi$
  has pure degree $(0,1)$.
\end{prop}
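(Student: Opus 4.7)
The plan is to transfer the torus action $\gamma$ from $A$ across the isomorphism $\phi$ and recognise it on $B$ as the natural gauge-type action that scales each generator by the proposed weight.

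First I would construct a candidate action $\tilde\gamma : \TTrs \times B \to B$ via the universal property of the Cuntz--Krieger algebra. Define
\begin{equation*}
  \tilde\gamma_{z,w}(S_\mu) = z S_\mu, \quad \tilde\gamma_{z,w}(S_\nu) = z S_\nu, \quad \tilde\gamma_{z,w}(S_\xi) = w S_\xi, \quad \tilde\gamma_{z,w}(p_v) = p_v, \quad \tilde\gamma_{z,w}(p_w) = p_w.
\end{equation*}
Since each Cuntz--Krieger relation in (\ref{eq:gralg}) involves a product of one $S_e$ with an $S_e^*$ (or equal numbers of starred and unstarred generators from the $\mu,\nu$-pair), the scalars $z,w,\bar z,\bar w$ cancel and the proposed rescaled family still satisfies (\ref{eq:gralg}). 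The universal property then yields a $*$-endomorphism $\tilde\gamma_{z,w}$ of $B$, and an easy check on generators shows $(z,w) \mapsto \tilde\gamma_{z,w}$ is a group action. Strong continuity follows exactly as in Proposition \ref{prop:exthaar}, using density of $A_c$.

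Next I would show that $\phi$ intertwines $\gamma$ and $\tilde\gamma$. It suffices to verify this on the generators $a,b$ of $A_{cc}$, since $A_{cc}$ is dense and both actions are by $C^*$-homomorphisms. Applying $\tilde\gamma_{z,w}$ term by term to the series (\ref{eq:cstarisom}), each summand $(S_\mu+S_\nu)^k S_\xi (S_\mu^*+S_\nu^*)^k$ is multiplied by $z^k \cdot w \cdot z^{-k} = w$, so $\tilde\gamma_{z,w}(\phi(b)) = w\phi(b) = \phi(\gamma_{z,w}(b))$. Similarly each summand in (\ref{eq:cstarisomb}) is multiplied by $z^k \cdot z^{-(k+1)} = z^{-1}$, giving $\tilde\gamma_{z,w}(\phi(a)) = z^{-1}\phi(a) = \phi(\gamma_{z,w}(a))$.

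Consequently, under the identification $A = B$ from the preceding proposition, the $\ZZ^2$-grading constructed in Proposition \ref{prop:exthaar} coincides with the one produced by $\tilde\gamma$ via the spectral projections $\Phi_{m,n}$. Since $\tilde\gamma_{z,w}(S_\mu) = z S_\mu$, integration against $z^{-m}w^{-n}$ as in (\ref{eq:PhimnDef}) gives $\Phi_{m,n}(S_\mu) = \delta_{m,1}\delta_{n,0}S_\mu$, so $S_\mu$ has pure degree $(1,0)$; the arguments for $S_\nu$ and $S_\xi$ are identical. The only real obstacle is the bookkeeping verification that the rescaled generators satisfy the Cuntz--Krieger relations, which is immediate here because every relation in (\ref{eq:gralg}) is degree-balanced under the proposed weighting.
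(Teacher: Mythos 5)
Your proposal is correct and follows essentially the same route as the paper: define $\tilde\gamma$ on the graph algebra generators via the universal property, observe from the series \eqref{eq:cstarisomb} and \eqref{eq:cstarisom} that $\tilde\gamma_{z,w}(a)=z^{-1}a$ and $\tilde\gamma_{z,w}(b)=wb$ so that $\tilde\gamma=\gamma$, and then read off the degrees from \eqref{eq:PhimnDef}. The only difference is that you spell out the verification of the Cuntz--Krieger relations and the scalar bookkeeping on each summand, which the paper leaves implicit.
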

\begin{proof}
  Let $\tilde{\gamma}$ be the action of $\TTrs$ on $A$ given by
  \begin{equation}
    \label{eq:2torusdef}
    \tilde{\gamma}_{z,w}(S_\mu) = zS_\mu, \tilde{\gamma}_{z,w}(S_\nu) =
    zS_\nu, \tilde{\gamma}_{z,w}(S_\xi)
    = wS_\xi.
  \end{equation}
  By the universal property of graph \Cstar-algebras, this defines
  $\tilde{\gamma}_{z,w}$ uniquely as a \Cstar-algebra homomorphism
  from $A$ to $A$.  Equations \eqref{eq:cstarisomb} and
  \eqref{eq:cstarisom} imply $\tilde{\gamma}_{z,w}(a) = z^{-1}a$ and
  $\tilde{\gamma}_{z,w}(b) = wb$.  But this is exactly how $\gamma$
  was uniquely defined, and so the two actions are equal.
  Thus we can combine equations
  \eqref{eq:2torusdef} and \eqref{eq:PhimnDef} to obtain the gradings
  of $S_\mu$, $S_\nu$ and $S_\xi$.
\end{proof}
Before proceeding further we establish some notation for dealing with
this algebra. This notation represents a specialisation of the
graph algebra notation for this particular graph.
\begin{defn}
  Given any non-negative integer $m$, and $n\in\ZZ$, let
  \begin{equation}
    \label{eq:ShatCheckdefn}
    \Shat_k =
    \begin{cases}
      p_v & k = 0 \\
      S_\mu^k & k \geq 1 \
    \end{cases}, \qquad
    \Scheck_k =
    \begin{cases}
      p_w & k = 0 \\
      S_\nu & k = 1 \\
      S_\mu^{k-1}S_\nu & k \geq 2 \
    \end{cases},\qquad
 \Sc_n = \begin{cases}
      S_\xi^{*|n|} & n \leq -1 \\
      p_w & n = 0 \\
      S_\xi^n & n \geq 1 \
    \end{cases}.
  \end{equation}
\end{defn}
\begin{lem}\label{lem:graphcalc}
  The following hold in $A$, for all $k,l \in \NN_0$ and
  $n,n' \in \ZZ$.
  \begin{eqnarray}
    \Scheck_k &=& \Scheck_k p_w, \label{eq:graphcalc2}\\
    \Shat_k &=& p_v \Shat_k p_v, \label{eq:graphcalc3} \\
    \label{eq:graphcalc1}
    \Scheck_k^* \Scheck_l &=& \delta_{k l} p_w,\ \ \ \ \Shat_k^*\Shat_k=p_v
 \label{eq:graphcalc6} \\
    \Scheck_{k+1}\Scheck_{l+1}^* &=& \Shat_k \Shat_l^* -
    \Shat_{k+1}\Shat_{l+1}^*, \label{eq:graphcalc4} \\
    \Shat_k \Shat_k^* \Scheck_l &=&
    \begin{cases}
      \Scheck_l & k \leq l-1,\\
      0 & k > l-1. \
    \end{cases} \label{eq:graphcalc5} \\
    \Sc_n \Sc_{n'} &=& \Sc_{n+n'} \label{eq:SSc} \\
    \Shat_k \Shat_l &=& \Shat_{k+l} \label{eq:Shat} \
  \end{eqnarray}
\end{lem}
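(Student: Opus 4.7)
The plan is to verify each identity by direct manipulation using only the Cuntz-Krieger relations \eqref{eq:gralg}, the identity $S_\rho^*S_\sigma = \delta_{\rho,\sigma}p_{r(\rho)}$ from \eqref{eq:graphcalc0}, and the source/range absorption rules $p_{s(e)}S_e = S_e = S_e p_{r(e)}$, together with $p_v p_w = 0$. Before attacking the identities I would record the consequences that get used repeatedly: $p_v S_\mu = S_\mu p_v = S_\mu$, $p_v S_\nu = S_\nu p_w = S_\nu$, $p_w S_\xi = S_\xi p_w = S_\xi$, and the orthogonality $S_\mu^* S_\nu = 0 = S_\nu^* S_\mu$ from \eqref{eq:graphcalc0} applied to distinct length-one paths.

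With these in hand, \eqref{eq:graphcalc2} and \eqref{eq:graphcalc3} are immediate instances of absorption, after separating the boundary cases $k=0$. The identity \eqref{eq:graphcalc6} splits naturally: $\Shat_k^*\Shat_k = p_v$ follows by iterating $S_\mu^*S_\mu = p_v$ and absorbing each resulting $p_v$ through the next $S_\mu$, while $\Scheck_k^*\Scheck_l = \delta_{kl}p_w$ follows by recognising $\Scheck_k$ as $S_{\mu^{k-1}\nu}$, a path of length $k$ from $v$ to $w$, and applying \eqref{eq:graphcalc0} in the equal-length case; unequal-length products are killed by $S_\mu^*S_\nu=0$ after absorbing the $S_\mu^*S_\mu$ factors.

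The main step is \eqref{eq:graphcalc4}. I would compute
\[
\Scheck_{k+1}\Scheck_{l+1}^* = S_\mu^k S_\nu S_\nu^* S_\mu^{*l} = S_\mu^k(p_v - S_\mu S_\mu^*)S_\mu^{*l} = S_\mu^k S_\mu^{*l} - S_\mu^{k+1}S_\mu^{*(l+1)},
\]
using the Cuntz-Krieger relation $S_\nu S_\nu^* + S_\mu S_\mu^* = p_v$ at the vertex $v$ and absorbing the $p_v$ against $S_\mu^{*l}$. The boundary cases $k=0$ or $l=0$ fall out of the same calculation once one substitutes $\Shat_0 = p_v$. Building on \eqref{eq:graphcalc4}, identity \eqref{eq:graphcalc5} reduces for $l \geq 1$ to evaluating $S_\mu^k S_\mu^{*k} S_\mu^{l-1} S_\nu$: when $k\leq l-1$ the interior telescopes to $S_\mu^{l-1-k}$ via $S_\mu^*S_\mu = p_v$, recovering $\Scheck_l$, while for $k\geq l$ one re-shuffles to expose a factor $S_\mu^* S_\nu = 0$. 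The case $l=0$ is instant because $S_\mu^{*k}p_w = 0$ by $p_v p_w = 0$.

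Finally, \eqref{eq:SSc} follows because $S_\xi S_\xi^* = p_w = S_\xi^* S_\xi$, so $S_\xi$ is a unitary in the corner $p_w A p_w$ and $\Sc_n\Sc_{n'} = \Sc_{n+n'}$ reduces to the usual exponent law, modulo separating the sign cases of $n,n'$ and $n+n'$; \eqref{eq:Shat} is immediate for $k,l\geq 1$ and handled by absorption when $k$ or $l$ is zero. The only truly non-routine step is the Cuntz-Krieger substitution in \eqref{eq:graphcalc4}; everything else is bookkeeping, and the main obstacle, as so often with graph $C^*$-algebras, is simply the careful enumeration of boundary cases in the indices.
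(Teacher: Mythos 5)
Your proposal is correct and follows essentially the same route as the paper: the boundary identities are absorption/orthogonality consequences of \eqref{eq:graphcalc0} and \eqref{eq:gralg}, and the key identity \eqref{eq:graphcalc4} comes from the Cuntz--Krieger relation $S_\nu S_\nu^* + S_\mu S_\mu^* = p_v$ (you substitute it into $\Scheck_{k+1}\Scheck_{l+1}^*$ whereas the paper substitutes it into $\Shat_{k+1}\Shat_{l+1}^*$, which is the same computation read in the other direction). Your treatment of the boundary cases and of \eqref{eq:graphcalc5}, \eqref{eq:SSc} is sound.
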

\begin{proof}
  Equations \eqref{eq:graphcalc2} and \eqref{eq:graphcalc3} are
  trivial.  Equations \eqref{eq:graphcalc1} and \eqref{eq:graphcalc5}
  follow from \eqref{eq:graphcalc0}.
  Equations \eqref{eq:SSc} and \eqref{eq:Shat} follow from
  \eqref{eq:gralg}.
  From \eqref{eq:Shat} and \eqref{eq:gralg} we have
  \begin{equation*}
    \Shat_{k+1}\Shat_{l+1}^* = \Shat_k S_\mu S_\mu^* \Shat_l
  = \Shat_k p_v \Shat_l^* - \Shat_k S_\nu S_\nu^* \Shat_l
  \end{equation*}
  {}From the definition we have $\Shat_kS_\nu = \Scheck_{k+1}$,
  and along with \eqref{eq:graphcalc3}, we obtain \eqref{eq:graphcalc4}.
\end{proof}
\begin{lem}\label{lem:densemonom2}
  The algebra $A_{c}$ is generated, as a vector space, by
  elements of the form $\Shat_k \Shat_l^*$ for
  $k,l \in \NN_0$ and $\Scheck_k \Sc_n \Scheck_l^*$
  for $k,l \in \NN_0$ and $n \in \ZZ$.  Hence elements of this type
  densely span $A$.
\end{lem}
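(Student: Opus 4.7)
The plan is to combine Proposition \ref{prop:densmonom1}, which tells us that $A_c$ is spanned by the monomials $S_\rho S_\sigma^*$ with $r(\rho)=r(\sigma)$, with an explicit enumeration of the finite directed paths in the graph of Figure 1. Because that graph has only three edges --- the loop $\mu$ at $v$, the bridge $\nu\colon v\to w$, and the loop $\xi$ at $w$ --- every finite path is of one of the following shapes:
\begin{itemize}
\item $\mu^k$ for some $k\ge 0$ (starting and ending at $v$; the trivial path $p_v$ corresponds to $k=0$);
\item $\xi^n$ for some $n\ge 0$ (starting and ending at $w$; $p_w$ corresponds to $n=0$);
\item $\mu^k\nu\xi^n$ for some $k,n\ge 0$ (starting at $v$ and ending at $w$).
\end{itemize}
Translating into the notation of Definition~\eqref{eq:ShatCheckdefn}, the first family gives $S_\rho=\Shat_k$, and the second and third together give $S_\rho=\Scheck_k\Sc_n$ for $k,n\ge 0$, where $k=0$ recovers the paths $\xi^n$ via $\Scheck_0\Sc_n=p_w\Sc_n=\Sc_n$.

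Now split into cases according to the common range vertex. If $r(\rho)=r(\sigma)=v$, then $\rho=\mu^k$ and $\sigma=\mu^l$, and
\begin{equation*}
S_\rho S_\sigma^*=\Shat_k\Shat_l^*,\qquad k,l\ge 0.
\end{equation*}
If instead $r(\rho)=r(\sigma)=w$, then $S_\rho=\Scheck_k\Sc_n$ and $S_\sigma=\Scheck_l\Sc_{n'}$ for some $k,l,n,n'\ge 0$, and using \eqref{eq:SSc} together with $\Sc_n^*=\Sc_{-n}$ we compute
\begin{equation*}
S_\rho S_\sigma^*=\Scheck_k\Sc_n\Sc_{-n'}\Scheck_l^*=\Scheck_k\Sc_{n-n'}\Scheck_l^*.
\end{equation*}
As $(n,n')$ ranges over $\NN_0^2$, the difference $n-n'$ ranges over all of $\ZZ$, so every element of the claimed form is hit.

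Finally, since $r(\rho)\ne r(\sigma)$ would make $S_\rho S_\sigma^*=0$ (by \eqref{eq:graphcalc2} and \eqref{eq:graphcalc3} the range projections $p_{r(\rho)}$ and $p_{r(\sigma)}$ are orthogonal), the two families above exhaust all non-zero spanning monomials for $A_c$. The density of the linear span of these elements in $A$ is then immediate from Proposition~\ref{prop:densmonom1}. The proof is essentially a bookkeeping exercise: the only thing to watch is that the trivial paths $p_v$ and $p_w$ fit into the uniform notation (handled by the $k=0$ cases of $\Shat_k$ and $\Scheck_k$), so there is no genuine obstacle, just a careful case distinction.
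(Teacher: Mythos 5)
Your proof is correct and follows essentially the same route as the paper's: reduce via Proposition \ref{prop:densmonom1} to the monomials $S_\rho S_\sigma^*$, enumerate the finite paths of the graph according to their range vertex, and translate into the $\Shat_k$, $\Scheck_k\Sc_n$ notation using \eqref{eq:SSc}. The only cosmetic difference is that you make the path enumeration and the orthogonality of the range projections fully explicit, which the paper leaves more terse.
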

\begin{proof}
  By Proposition \ref{prop:densmonom1}, we need only show that for all
  $\rho,\sigma \in \paths$ with $r(\rho)=r(\sigma)$, the monomial
  $S_\rho S_\sigma^*$ can be written in one of the above forms.
  First we consider the case
  $r(\rho)=r(\sigma)=v$.  Since there is no directed path from $w$
  to $v$, every path in $E$ ending in $v$ must have zero length
  or be of the form $\mu^k$ for some $k \geq 1$.  Hence
  we have $S_\rho S_\sigma^* = \Shat_k \Shat_l^*$ for some
  $k,l \in \NN_0$, since the cases $k=0$ and $l=0$
  deal with the cases when $\rho$ and $\sigma$ are respectively of
  zero length.

  Next we have the case $r(\rho)=r(\sigma)=w$.  By similar reasoning,
  $S_\rho$ and $S_\sigma$ both take one of the following forms
  \begin{equation*}
    S_\mu^k S_\nu S_\xi^l,\ \ S_k S_\nu,\ \ S_\nu S_\xi^l,\ \ S_\nu,\
    \ S_\xi^l, p_w.
  \end{equation*}
  Note that all of these can be written in the form
  $\Scheck_k \Sc_n$ where $k,n \in \NN_0$.  Hence
  $S_\rho S_\sigma^*$ is of the form $\Scheck_k \Sc_{n-n'}
  \Scheck_l^*$ by Equation \eqref{eq:SSc}.
\end{proof}
\begin{lem}\label{lem:trgralg} We have the following formulae for the
  Haar state on generators:
  \begin{eqnarray}
    \label{eq:haar1}
    h(\Scheck_k U_n\Scheck_l^*) &=& \delta_{n,0}\delta_{k,l}q^{2k}(1-q^2),\\
    h(\Shat_k \Shat_l^*) &=&
    \delta_{k,l}q^{2k+2}. \label{eq:haar2} \
  \end{eqnarray}
\end{lem}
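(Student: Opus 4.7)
The plan is to separate the Kronecker deltas (from the grading) from the numerical values (from the known Haar moments of $b$), and then move from $\Scheck_k\Scheck_k^*$ to $\Shat_k\Shat_k^*$ by telescoping with~\eqref{eq:graphcalc4}.

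For the deltas, Proposition~\ref{prop:deggraph} gives $\deg\Shat_k=(k,0)$, $\deg\Scheck_k=(k,0)$ and $\deg\Sc_n=(0,n)$, so $\Shat_k\Shat_l^*\in A[k-l,0]$ and $\Scheck_k\Sc_n\Scheck_l^*\in A[k-l,n]$. Proposition~\ref{prop:exthaar} then forces the factors $\delta_{k,l}$ (and $\delta_{n,0}$ in the second case); otherwise $h$ annihilates the element. When $k=l$ and $n=0$, equation~\eqref{eq:graphcalc2} collapses $\Scheck_k\Sc_0\Scheck_k^* = \Scheck_k p_w\Scheck_k^* = \Scheck_k\Scheck_k^*$.

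For the values, I would first rewrite $\phi(b)$ in graph-algebra notation. Since $S_\nu\in p_vAp_w$ and $p_w$ annihilates $S_\mu,S_\nu$ from the left, the expansion of $(S_\mu+S_\nu)^k$ collapses to $\Shat_k+\Scheck_k$ for $k\ge 1$ (only the words $\mu^k$ and $\mu^{k-1}\nu$ survive). Similarly $p_vp_w=0$ forces $\Shat_k S_\xi=0$ and $S_\xi\Shat_k^*=0$, so the cross terms in~\eqref{eq:cstarisom} vanish and
\[
\phi(b) = \sum_{k\ge 0} q^{k}\,\Scheck_k\Sc_1\Scheck_k^*.
\]
A short induction on $n$ using $\Scheck_j^*\Scheck_k=\delta_{jk}p_w$ from~\eqref{eq:graphcalc1} together with~\eqref{eq:SSc} then yields $\phi(b^n)=\sum_k q^{nk}\Scheck_k\Sc_n\Scheck_k^*$, whence
\[
\phi(b^{*n}b^n) \;=\; \sum_{k\ge 0} q^{2nk}\,\Scheck_k\Scheck_k^*.
\]
For $n\ge 1$ the right-hand side converges in norm (the $\Scheck_k\Scheck_k^*$ are mutually orthogonal projections and $q^{2nk}\to 0$), so setting $\alpha_k:=h(\Scheck_k\Scheck_k^*)$ and invoking~\eqref{eq:haar} gives the moment identity
\[
\sum_{k\ge 0} q^{2nk}\alpha_k \;=\; \frac{1-q^2}{1-q^{2n+2}} \;=\; (1-q^2)\sum_{k\ge 0} q^{2(n+1)k}, \qquad n\ge 1.
\]
Both sides are convergent power series in $t=q^{2n}$ agreeing on a sequence accumulating at $0$, so term-by-term identification gives $\alpha_k=q^{2k}(1-q^2)$ (equivalently, take $n\to\infty$ to extract $\alpha_0=1-q^2$, then subtract, divide by $q^{2n}$, and iterate).

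For the last formula, specialising~\eqref{eq:graphcalc4} to $k=l$ yields $\Scheck_{k+1}\Scheck_{k+1}^* = \Shat_k\Shat_k^* - \Shat_{k+1}\Shat_{k+1}^*$. Since $p_v+p_w=1$ in this graph we have $h(\Shat_0\Shat_0^*)=h(p_v)=1-h(p_w)=1-\alpha_0=q^2$, and then an induction on $k$ via $h(\Shat_{k+1}\Shat_{k+1}^*)=h(\Shat_k\Shat_k^*)-q^{2(k+1)}(1-q^2)$ gives $h(\Shat_k\Shat_k^*)=q^{2k+2}$. The main obstacle is the inversion step for the $\alpha_k$: justifying the interchange of $h$ with the infinite sum and extracting individual Taylor coefficients from the one-parameter family of moments, rather than reading off the formula by a single direct computation.
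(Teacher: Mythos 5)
Your proof is correct and follows the paper's own route: the same grading reduction to $k=l$, $n=0$, the same rewriting $b=\sum_k q^k\Scheck_k U_1\Scheck_k^*$ leading to $b^{*n}b^n=\sum_k q^{2nk}\Scheck_k\Scheck_k^*$ and the moment identity $\sum_k q^{2nk}h(\Scheck_k\Scheck_k^*)=(1-q^2)/(1-q^{2n+2})$, and the same telescoping via \eqref{eq:graphcalc4} (plus $h(1)=1$) to get $h(\Shat_k\Shat_k^*)=q^{2k+2}$. The only point of divergence is the coefficient-extraction step you flag as the main obstacle: you invoke the identity theorem for the two power series in $t=q^{2n}$ (legitimate for $0<q<1$, since $0\le h(\Scheck_k\Scheck_k^*)\le 1$ makes $\sum_k\alpha_k t^k$ analytic on the unit disc and the points $q^{2n}$ accumulate at $0$), whereas the paper carries out the elementary iteration you mention parenthetically, with the explicit tail bound $\bigl|\sum_{k>l}q^{2kn}\alpha_k\bigr|\le q^{2(l+1)n}/(1-q^{2n})$ and an induction on $l$ --- both justifications are sound, and yours is arguably the cleaner of the two.
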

\begin{proof}
  By considering the grading of the terms on the left hand side and
  applying Proposition \ref{prop:exthaar} we are reduced to the case
  $k = l$ and $n=0$.
  We first simplify Equation \eqref{eq:cstarisom}.  Since $S_\nu S_\mu =
  S_\nu S_\nu = 0$, when we expand $(S_\mu + S_\nu)^k$ for $k \geq 1$
  we obtain
  exactly two terms, $\Scheck_k$ and $\Shat_k$.  Note that $\Shat_k
  U_1 = 0$ by Equation \eqref{eq:graphcalc3}.  Hence for $k> 0$,
  \begin{equation*}
    (S_\mu + S_\nu)^k S_\xi (S_\mu + S_\nu)^{*k} =
    (\Scheck_k+\Shat_k)U_1(\Scheck_k^* + \Shat_k^*) = \Scheck_k U_1
    \Scheck_k^*.
  \end{equation*}
For $k=0$ the corresponding formula is $U_1=p_wU_1p_w=\Scheck_0 U_1
    \Scheck_0^*$.
  Now $\Scheck_k$ is a partial isometry so $\| \Scheck_k \| = \|
  \Scheck_k^*\| = 1$.  Since $U_1$ is also a partial isometry,
$\Scheck_k U_1
    \Scheck_k^*$  has norm at most $1$, and so the series
  \begin{equation}\label{eq:bseries}
    b = \sum_{k=0}^\infty q^k \Scheck_k U_1 \Scheck_k^*
  \end{equation}
  converges absolutely.  Therefore we have
  \begin{eqnarray}
    bb^* &=& \sum_{k,k'=0}^\infty q^{k+k'} \Scheck_k U_1
    \Scheck_k^* \Scheck_{k'} U_{-1} \Scheck_{k'}^* 
    =\sum_{k,k'=0}^\infty \delta_{k,k'} q^{k+k'}  \Scheck_k U_1 p_w
         U_{-1} \Scheck_{k'}^* \qquad \text{by \eqref{eq:graphcalc1}}
         \nonumber \\
         &=& \sum_{k,k'=0}^\infty \delta_{k,k'} q^{k+k'} \Scheck_k
         \Scheck_{k'}^* = \sum_{k=0}^\infty q^{2k} \Scheck_k
         \Scheck_{k}^*. \label{eq:bbsseries} \
  \end{eqnarray}
  Raising the above to the $n$-th power and again applying
  Equation \eqref{eq:graphcalc1} we obtain
  \begin{eqnarray*}
    b^nb^{*n} &=& \sum_{k_1,k_2,\ldots,k_n=0}^\infty
    q^{2(k_1+k_2+\ldots+k_n)}
    \Scheck_{k_1} \Scheck_{k_1}^*  \Scheck_{k_2} \Scheck_{k_2}^*
    \cdots  \Scheck_{k_n} \Scheck_{k_n}^* \\
    &=& \sum_{k_1,k_2,\ldots,k_n=0}^\infty
    \delta_{k_1,k_2}\delta_{k_2,k_3}\cdots\delta_{k_{n-1},k_n}
    q^{2(k_1+k_2+\ldots+k_n)}
    \Scheck_{k_1} \Scheck_{k_n}^* = \sum_{k=0}^\infty q^{2kn}\Scheck_k \Scheck_k^*
  \end{eqnarray*}
  Evaluating the Haar state on both sides, Equation \eqref{eq:haar}
  gives
  \begin{equation}
    \label{eq:haarbasic}
    \sum_{k=0}^\infty q^{2k n} h(\Scheck_k
    \Scheck_k^*) = \frac{1-q^2}{1-q^{2(n+1)}}.
  \end{equation}
{}From this we wish to calculate $h(\Scheck_k \Scheck_k^*)$
  for all $k \geq 0$.  First we have shown the norm of $\Scheck_k
  \Scheck_k^*$ is at most $1$.  Therefore as $|h(x)|\leq||x||$,
  \begin{equation}
    \label{eq:tailest}
    \left| \sum_{k=l+1}^\infty q^{2k n} h(\Scheck_k
    \Scheck_k^*) \right| \leq  \sum_{k=l+1}^\infty q^{2k n} | h(\Scheck_k
    \Scheck_k^*) | \leq \sum_{k=l+1}^\infty q^{2k n} =\frac{q^{2(l+1)n}}{1-q^{2n}},
  \end{equation}
  for all $l \in \NN$.
  We now prove
$    h(\Scheck_l \Scheck_l^*) = q^{2l}(1-q^2)$
  by  induction.  For the case $l=0$, we
   take the limit of Equation \eqref{eq:haarbasic} as $n \to
  \infty$.  Then by Equation \eqref{eq:tailest},
  the left hand side converges to
  $h(\Scheck_0 \Scheck_0^*)$ while the right hand side converges to
  $1-q^2$.  Given $l > 0$, the inductive hypothesis and Equation
  \eqref{eq:haarbasic} yield
  \begin{equation}
    \label{eq:haarnext}
    \left( \sum_{k=0}^{l-1} q^{2k (n+1)}(1-q^2) \right) +
    q^{2ln}h(\Scheck_l \Scheck_l^*) + \sum_{k=l+1}^\infty
    q^{2k n} h(\Scheck_l \Scheck_l^*) =
    \frac{1-q^2}{1-q^{2(n+1)}}.
  \end{equation}
  Summing the first sum we obtain
  \begin{equation*}
    \frac{(1-q^2)(1-q^{2l(n+1)})}{1-q^{2(n+1)}} +
    q^{2ln}h(\Scheck_l \Scheck_l^*) + \sum_{k=l+1}^\infty
    q^{2k n} h(\Scheck_l \Scheck_l^*) =
    \frac{1-q^2}{1-q^{2(n+1)}}.
  \end{equation*}
  Cancelling and moving the remaining sum to the right hand side gives
  \begin{equation*}
    -\frac{(1-q^2)q^{2l(n+1)}}{1-q^{2(n+1)}} +
    q^{2ln}h(\Scheck_l \Scheck_l^*) = -\sum_{k=l+1}^\infty
    q^{2k n} h(\Scheck_l \Scheck_l^*)
  \end{equation*}
  Taking the absolute value of both sides, and applying the estimate
  \eqref{eq:tailest} we
  obtain
  \begin{equation*}
    q^{2ln} \left| h(\Scheck_l \Scheck_l^*) -
    \frac{(1-q^2)q^{2l}}{1-q^{2(n+1)}} \right|
     \leq \frac{q^{2(l+1)n}}{1-q^{2n}}
  \end{equation*}
  Cancelling the $q^{2ln}$ on both sides and taking the limit as $n
  \to \infty$, we obtain Equation \eqref{eq:haar3}.  Equation \eqref{eq:haar2}
  follows inductively from this, Equation \eqref{eq:graphcalc4} and $h(1)=1$.
\end{proof}
\begin{prop} The Haar state $h$ on $A$ is KMS (for $\beta=1$)
with respect to the
  action $\sigma:\RR\times A\to A$ defined by
$$ \sigma_t(\Scheck_k)=q^{it2k}\Scheck_k,\quad
\sigma_t(\Shat_k)=q^{it2k}\Shat_k,\quad
\sigma_t(U_n)=U_n,\quad k\in\NN_0,\ n\in\ZZ .$$
\end{prop}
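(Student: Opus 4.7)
The plan is to verify the KMS identity $h(xy)=h(\sigma_i(y)x)$ (at $\beta=1$) on the norm-dense, $\sigma$-invariant $*$-subalgebra $A_c$ of $A$, and then appeal to the standard density argument for KMS states.

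First, $\sigma$ does extend from generators to a strongly continuous one-parameter group of $*$-automorphisms: indeed $\sigma_t$ is just the restriction of the $\TTrs$-action $\tilde{\gamma}$ of Proposition \ref{prop:deggraph} to the one-parameter subgroup $t\mapsto(q^{2it},1)\in\TTrs$, noting $|q^{2it}|=1$ for real $t$. The spanning monomials $\Shat_k\Shat_l^*$ and $\Scheck_kU_n\Scheck_l^*$ from Lemma \ref{lem:densemonom2} are of pure $\ZZ^2$-degree, hence are pure $\sigma$-weight vectors: if $\sigma_t(x)=q^{2itm}x$ then the entire extension satisfies $\sigma_i(x)=q^{-2m}x$, so $A_c$ consists of $\sigma$-entire analytic vectors.

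Reducing by bilinearity, it suffices to check the KMS identity on pairs $x,y$ drawn from the spanning set. By Proposition \ref{prop:exthaar}, $h$ annihilates every $\ZZ^2$-component of nonzero degree, so both sides vanish unless $\deg(x)+\deg(y)=(0,0)$; only matched pairs need treatment. Three cases arise: (i) both $x$ and $y$ of $\Shat$-form, with $k-l=b-a$; (ii) both of $\Scheck U\Scheck^*$-form, in which case matching degrees force the two $U$-exponents to be negatives of each other; (iii) one of each type, for which the degree constraint forces the $U$-exponent on the $\Scheck$-factor to vanish, so that factor collapses to $p_w$ via \eqref{eq:graphcalc2}. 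In every case the identities of Lemma \ref{lem:graphcalc}---principally $\Shat_l^*\Shat_a=\Shat_{a-l}$ (for $l\leq a$) or $\Shat_{l-a}^*$ (for $l\geq a$), $\Scheck_l^*\Scheck_a=\delta_{la}p_w$, $U_nU_{-n}=p_w$, and $\Shat_l^*\Scheck_a=\Scheck_{a-l}$ when $a>l$ (otherwise $0$, using $S_\mu^*S_\nu=0$)---collapse $xy$ and $yx$ to single basis monomials (or to zero), and Lemma \ref{lem:trgralg} then supplies the explicit $h$-values.

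For instance, in case (i) a short sub-case analysis yields $h(xy)=q^{2\max(k,b)+2}$ and $h(yx)=q^{2\max(a,l)+2}$, together with the identity $\max(k,b)-\max(a,l)=b-a$, which is a direct consequence of the degree constraint $k-l=b-a$. Since $\sigma_i(y)=q^{2(b-a)}y$, the modular factor converts $h(yx)$ into $h(xy)$ exactly. Cases (ii) and (iii) follow the same pattern: the modular exponent $2(b-a)$ carried by $y$ cancels precisely the asymmetry between the top and bottom $q$-weights produced by Lemma \ref{lem:trgralg}. The main technical obstacle is case (iii), where one must additionally verify that $h(xy)$ and $h(\sigma_i(y)x)$ vanish simultaneously---this happens exactly when $a\leq l$, equivalently when $b\leq k$ by the degree constraint. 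Once the case-by-case equalities are established on $A_c$, the norm-density of $A_c$ and the strong continuity of $\sigma$ extend the KMS condition to all of $A$.
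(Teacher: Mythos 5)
Your proposal is correct and follows essentially the same route as the paper: verify the identity $h(xy)=h(\sigma_i(y)x)$ on the spanning monomials of $A_c$ using the $\ZZ^2$-grading and the explicit Haar state formulae of Lemma \ref{lem:trgralg}, then invoke the standard extension of the KMS condition from a dense $\sigma$-invariant subalgebra of analytic elements to all of $A$. The only difference is one of emphasis: the paper asserts the verification on $A_c$ in one line and instead writes out the holomorphic interpolating function $F_{a,b}$ on the strip, whereas you carry out the case analysis on monomials explicitly (and correctly) and leave the strip argument to the cited standard theory.
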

\begin{proof} Using the formulae for the Haar state on generators, it
follows that 
$h(ab)=h(\sigma_i(b)a)$ for $a,b\in A_c.$
For $a,b \in \{\Shat_k,\Scheck_k\}$ we have a holomorphic function
$$F_{a,b}(z)=q^{-Im(z)2k}h(\s_{Re(z)}(b)a)=h(\s_z(b)a)$$
in the strip $0\leq Im(z)\leq 1$, with boundary values given by
$$ F_{a,b}(t+0i)=h(\s_t(b)a),\qquad
F_{a,b}(t+i)=q^{-2k}h(\s_t(b)a)=h(\s_{t+i}(b)a)=h(a\s_t(b)).$$
{}The proposition now follows by standard theory
see \cite{BR, Ped}.
\end{proof}

\section{The GNS representation for the Haar state}
\label{sec:GNSandMod}

As usual, since $h$ is a state we may form the inner product on $A$ given by
 $ \langle a,b\rangle : = h(a^*b).$
This gives a norm $\| a \|_\HHt := \langle a,a\rangle^{1/2}$ and
\begin{equation}\label{eq:NHtleqN}
  \|a\|_\HHt^2 = h(a^*a) \leq h(\|a^*a\|1) = \|a^*a\|.
\end{equation}
We denote by $\HHt$ the
completion of $A$ with respect to this norm and we denote the
extension of this norm to $\HHt$ by $\|\cdot\|_\HHt$.  By
construction
we can consider $A$ to be a subspace of $\HHt$ and as an $A$-module
$\HHt$ has a cyclic and separating vector, namely $1$.

\begin{prop}[{\cite[Proposition 9.2.3]{KR}}]
There is a self-adjoint unbounded operator $H$ on $\HHt$
and conjugate linear isometry $J$
such that $S = JH^{1/2}$  where $S$ is
defined by $ S x\cdot 1 = x^*\cdot 1$,
for all $x \in A$.
The operator $H$ generates a one parameter group that implements, on the GNS
space, the modular automorphism group.
\end{prop}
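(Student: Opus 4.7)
The plan is to apply the standard Tomita-Takesaki construction, leveraging the KMS condition established in the preceding proposition. The vector $\Omega := 1 \in \HHt$ is cyclic by density of $A$ in $\HHt$, and separating because for $0<q<1$ the Haar state is faithful: if $x\Omega = 0$ then $h(x^*x) = 0$ and hence $x = 0$. Accordingly the assignment $S_0 : x\Omega \mapsto x^*\Omega$ on the dense domain $A\Omega$ is a well-defined anti-linear operator with $S_0^2 \subseteq I$.

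The next step is to show $S_0$ is closable. The explicit formula for $\sigma_t$ on the generators shows that every element of $A_c$ is $\sigma$-analytic, so we may define $F_0 : y\Omega \mapsto \sigma_i(y)^*\Omega$ on the dense subspace $A_c\Omega$. Using $\overline{h(a)} = h(a^*)$, the identity $\sigma_i(y)^* = \sigma_{-i}(y^*)$ for analytic $y$, and the KMS relation $h(xy) = h(\sigma_i(y)x)$, one checks directly that
\begin{equation*}
\langle S_0 x\Omega,\, y\Omega\rangle \;=\; h(xy) \;=\; \overline{\langle x\Omega,\, F_0 y\Omega\rangle},
\end{equation*}
so that $F_0 \subseteq S_0^*$. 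In particular $S_0^*$ is densely defined, hence $S_0$ is closable; let $S$ denote its closure.

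Now take the polar decomposition $S = JH^{1/2}$ with $H := S^*S$. The general theory of closed anti-linear operators with dense range satisfying $S^2 \subseteq I$ (cf.\ Kadison-Ringrose, Chapter 9) yields that $H$ is positive self-adjoint, $J$ is a conjugate-linear isometry, $J^2 = I$, and $JH^{1/2}J = H^{-1/2}$. It then follows that $t \mapsto H^{it}$ implements a strongly continuous one-parameter automorphism group of the weak closure of $A$ in $B(\HHt)$, and the uniqueness of the modular group associated to a faithful KMS state (Takesaki) identifies this implemented group with $\sigma$ itself. The only real technical obstacle is the polar-decomposition step, which requires careful handling of anti-linearity and unbounded domains; since none of this is specific to $SU_q(2)$, it is reasonable to cite the result from Kadison-Ringrose rather than reprove it in detail.
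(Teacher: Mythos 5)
Your proof is correct and follows essentially the same route as the paper, which gives no argument of its own and simply cites Kadison--Ringrose Proposition 9.2.3 --- precisely the standard Tomita--Takesaki construction you sketch. The only content specific to $SU_q(2)$, namely that $1$ is cyclic and separating because $h$ is faithful for $0<q<1$, and that the KMS condition for $\sigma$ yields closability of $S_0$ and the identification of $H^{it}\cdot H^{-it}$ with the modular group, is handled correctly.
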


\begin{lem}\label{lem:onbasis}
  The set $\{ e_{k,n,l} \}_{n\in\ZZ,k,l \in \NN}$ where
 $    e_{k,n,l} := (1-q^2)^{-1/2}q^{-l}\Scheck_k \Sc_n
  \Scheck_l^*,$ is
  an orthonormal basis for $\HH_h$.  For all $n \in \ZZ$ and $k,l \in
\NN$,
   $e_{k,n,l}$ is an eigenvector for the
 $H$ with eigenvalue $q^{2(k-l)}$.
\end{lem}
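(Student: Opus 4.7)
The plan splits into three pieces: verify orthonormality by direct computation, establish completeness using the dense spanning set from Lemma \ref{lem:densemonom2}, and read off the $H$-eigenvalues from the modular automorphism group $\sigma$. The main obstacle is the density step, because the $\Shat_k\Shat_l^*$ monomials appearing in Lemma \ref{lem:densemonom2} are not of the form $\Scheck_k \Sc_n \Scheck_l^*$; I will handle these by iterating the relation \eqref{eq:graphcalc4}.

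For orthonormality, I compute
\begin{equation*}
(1-q^2)\langle e_{k,n,l}, e_{k',n',l'}\rangle
= q^{-l-l'}\, h\!\left(\Scheck_l \Sc_{-n}\Scheck_k^*\Scheck_{k'}\Sc_{n'}\Scheck_{l'}^*\right).
\end{equation*}
Using $\Scheck_k^*\Scheck_{k'} = \delta_{k,k'} p_w$ from \eqref{eq:graphcalc1}, the fact that $p_w$ fixes $\Sc_{n'}$, and the composition rule \eqref{eq:SSc}, the argument of $h$ collapses to $\delta_{k,k'}\Scheck_l \Sc_{n'-n}\Scheck_{l'}^*$. Applying formula \eqref{eq:haar1} produces $\delta_{k,k'}\delta_{n,n'}\delta_{l,l'}q^{2l}(1-q^2)$, and cancellation gives the Kronecker deltas as required.

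For density, Lemma \ref{lem:densemonom2} together with $\|\cdot\|_\HHt \leq \|\cdot\|$ (inequality \eqref{eq:NHtleqN}) shows the $C^*$-span of $\{\Shat_k\Shat_l^*\} \cup \{\Scheck_k \Sc_n \Scheck_l^*\}$ is dense in $\HHt$. The second family already supplies (scalar multiples of) the proposed basis vectors, so it suffices to absorb the $\Shat_k\Shat_l^*$. Iterating the identity \eqref{eq:graphcalc4} $N$ times yields
\begin{equation*}
\Shat_k\Shat_l^* = \Shat_{k+N}\Shat_{l+N}^* + \sum_{j=1}^{N}\Scheck_{k+j}\Scheck_{l+j}^*.
\end{equation*}
Using $\Shat_k^*\Shat_k = p_v$ and $\Shat_l\, p_v = \Shat_l$, one has $\|\Shat_{k+N}\Shat_{l+N}^*\|_\HHt^2 = h(\Shat_{l+N}\Shat_{l+N}^*) = q^{2(l+N)+2}$ by \eqref{eq:haar2}, which vanishes as $N\to\infty$. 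Hence $\Shat_k\Shat_l^*$ lies in the $\HHt$-closure of the span of the $e_{k,n,l}$, completing the density argument.

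For the eigenvalue statement, the proposition preceding Lemma \ref{lem:onbasis} gives $\sigma_t(\Scheck_k)=q^{2itk}\Scheck_k$, $\sigma_t(\Sc_n)=\Sc_n$, so
\begin{equation*}
\sigma_t(\Scheck_k \Sc_n \Scheck_l^*) = q^{2it(k-l)}\,\Scheck_k \Sc_n \Scheck_l^*,
\end{equation*}
which rescales $e_{k,n,l}$ by $q^{2it(k-l)}$. Since $H$ generates the one-parameter group implementing $\sigma$ on $\HHt$ (i.e.\ $H^{it}(x\cdot 1) = \sigma_t(x)\cdot 1$), comparing $H^{it}e_{k,n,l} = e^{it\log q^{2(k-l)}}e_{k,n,l}$ identifies $e_{k,n,l}$ as an eigenvector of $H$ with eigenvalue $q^{2(k-l)}$.
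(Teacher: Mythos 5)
Your orthonormality and density arguments coincide with the paper's: the same reduction via $\Scheck_k^*\Scheck_{k'}=\delta_{k,k'}p_w$ and \eqref{eq:haar1}, and the same telescoping of \eqref{eq:graphcalc4} with the tail estimate $\|\Shat_{k+N}\Shat_{l+N}^*\|_\HHt^2=q^{2(l+N)+2}\to 0$. Where you genuinely diverge is the eigenvalue computation. The paper stays entirely inside the definition $S=JH^{1/2}$: it notes $e_{k,n,l}^*=q^{k-l}e_{l,-n,k}$, feeds this into $\langle x^*,y^*\rangle=\langle Hx,y\rangle$, and reads off $He_{k,n,l}=q^{2(k-l)}e_{k,n,l}$ from the already-established orthonormality; this is elementary and self-contained. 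You instead identify the circle action $\sigma_t$ of the KMS proposition with the modular automorphism group and use $H^{it}(x\cdot 1)=\sigma_t(x)\cdot 1$. That route is valid, but it rests on an input you leave implicit: the cited proposition from \cite{KR} only says $H$ implements \emph{the modular group of $h$}, and equating that group with the explicitly given $\sigma_t$ requires Takesaki's uniqueness theorem for KMS automorphism groups (applied to the weak closure, and using faithfulness of $h$, hence $0<q<1$). Note that you cannot instead appeal to Lemma \ref{lem:stuff}, whose proof in the paper \emph{uses} the present lemma — citing the KMS proposition as you do is the right way to avoid circularity. The trade-off is clear: your version buys a conceptual explanation of why the eigenvalue is $q^{2(k-l)}$ (it is the modular weight of the monomial), at the cost of invoking Tomita--Takesaki machinery that the paper's two-line direct computation does not need.
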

\begin{proof}
  First we show these elements are mutually orthogonal.  By Equation
  \eqref{eq:graphcalc1} we have
  \begin{equation*}
    \langle\Scheck_{k'} \Sc_{n'} \Scheck_{l'}^*,\Scheck_k
    \Sc_n \Scheck_l^*\rangle = h(\Scheck_{l'} \Sc_{n'}^* \Scheck_{k'}^*
    \Scheck_k \Sc_n\Scheck_l^*)=
\delta_{k,k'}
    h(\Scheck_{l'} \Sc_{n-n'}\Scheck_l^*).
  \end{equation*}
  The operator $\Scheck_{l'} \Sc_{n-n'}\Scheck_l^*$ has grading
$(l'-l,n-n')$ and so by Proposition
  \ref{prop:haar}, the above becomes
  \begin{equation*}
    \langle\Scheck_{k'} \Sc_{n'} \Scheck_{l'}^*,\Scheck_k
    \Sc_n \Scheck_l^*\rangle =
    \delta_{k,k'}\delta_{l,l'}\delta_{n,n'}
    h(\Scheck_l\Scheck_l^*).
  \end{equation*}
  Then the correct normalization follows from Equation
  \eqref{eq:haar1}.   Thus the $e_{k,n,l}$ are orthonormal, and it
  remains to show that they span $\HH_h$.

  Lemma \ref{lem:densemonom2} shows that the linear span of the
  elements $\Shat_k \Shat_l^*$, $\Scheck_{l'}
  \Sc_{n}\Scheck_k'^*$ with $l,l',k,k'\in\NN_0,\ n\in\ZZ$, is  dense  in
  $A$.  By Equation \eqref{eq:NHtleqN} this set is also dense in $\HHt$.
  Therefore it suffices to show that for each $k,l$ we can
  approximate $\Shat_k\Shat_l^*$ by elements of our basis.
  Indeed, by Equation \eqref{eq:graphcalc4} we have
  \begin{equation}\label{eq:onbasis1}
    \sum_{j=0}^{N-1} \Scheck_{k+j+1}\Scheck_{l+j+1}^* =
    \sum_{j=0}^{N-1} (\Shat_{k+j}\Shat_{l+j}^* -
    \Shat_{k+j+1}\Shat_{l+j+1}^*) =
    \Shat_{k}\Shat_{l}^* -
    \Shat_{k+N}\Shat_{l+N}^*.
  \end{equation}
  Now as $N \to \infty$ we have
  \begin{equation*}
    \| \Shat_{k+N}\Shat_{l+N}^* \|_\HHt^2 =
    \tau(\Shat_{l+N}\Shat_{k+N}^*\Shat_{k+N}
    \Shat_{l+N}^*)
    = \tau(\Shat_{l+N}\Shat_{l+N}^*) =
    q^{2(l+N)+2} \to 0
  \end{equation*}
  where we have used Equations \eqref{eq:graphcalc1}, \eqref{eq:graphcalc2}
  and \eqref{eq:haar2} above.  Therefore taking the limit in $\HHt$ of both
  sides of Equation \eqref{eq:onbasis1} as $N \to \infty$ we obtain
  \begin{equation*}
    \Shat_k\Shat_l^* = \sum_{j=0}^\infty
    \Scheck_{k+j+1}\Scheck_{l+j+1}^*=\sum_{j=1}^\infty
    \Scheck_{k+j}\Scheck_{l+j}^*.
  \end{equation*}
  Note that this does not hold in the norm topology on $A$.

  For any $x,y \in A$, we have
  \begin{equation}
    \label{eq:modeqn1}
    \langle x^*,y^*\rangle = \langle Sx,Sy\rangle = \langle
JH^{1/2}x,JH^{1/2}y\rangle =
    \langle Hx,y\rangle
  \end{equation}
  Now note that
  \begin{equation*}
    e_{k,n,l}^* = (1-q^2)^{-1/2}q^{-l} (\Scheck_k \Sc_n \Scheck_l^*)^*
  = (1-q^2)^{-1/2}q^{-l} \Scheck_l \Sc_{-n} \Scheck_k^* =
  q^{k-l}e_{l,-n,k}
  \end{equation*}
  Substituting this into Equation \eqref{eq:modeqn1} we obtain
  \begin{equation*}
    q^{2(k-l)}\langle e_{l,-n,k},e_{l',-n',k'}\rangle = \langle
He_{k,n,l},e_{k',n',l'}\rangle
  \end{equation*}
  But we have already shown that
  \begin{equation*}
    \langle e_{l,-n,k},e_{l',-n',k'}\rangle = \langle
    e_{k,n,l},e_{k',n',l'}\rangle  =
    \delta_{k,k'}\delta_{n,n'}\delta_{l,l'},
  \end{equation*}
  so we obtain
  \begin{equation*}
    \langle H e_{k,n,l},e_{k',n',l'}\rangle = q^{2(k-l)}\langle
e_{k,n,l},e_{k',n',l'}\rangle.
  \end{equation*}
  Since $e_{k',n',l'}$ is an orthonormal basis of $\HHt$, we have
$He_{k,n,l} = q^{2(k-l)}e_{k,n,l}$.
\end{proof}

In fact the linear span of the eigenvectors $e_{k,n,l}$ form a core for
$H$
by \cite[Theorem VIII.11]{RS}.

\begin{lem}\label{lem:stuff}
  There is a unique strongly continuous group action
  $\sigma$ of $\Trs$ on $A$ such that for all $a \in A$, $z\in\Trs$
  and $\xi \in \HHt$,
  \begin{equation}
    \label{eq:sigma}
    \sigma_z(a)\xi = H^{it} a H^{-it}\xi,
  \end{equation}
  where $z=e^{it\log q^2}=q^{2it}$.  This action satisfies
  \begin{eqnarray}
    \sigma_z(\Scheck_k U_n \Scheck_l^*) &=&
  z^{k - l}\Scheck_k U_n
  \Scheck_l^* \label{eq:sigmacheck} \\
    \sigma_z(\Shat_k \Shat_l^*) &=&
  z^{k - l}\Shat_k \Shat_l^*,\quad n\in\ZZ,\ k,l\in\NN_0
  \label{eq:sigmahat} \
  \end{eqnarray}
\end{lem}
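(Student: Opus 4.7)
The plan is to define $\sigma$ directly from the graph-algebra gauge action of Proposition \ref{prop:deggraph}, then verify that conjugation by $H^{it}$ implements it on the GNS space. Set $\sigma_z := \tilde\gamma_{z,1}$, i.e.\ restrict $\tilde\gamma$ to the circle where the second coordinate is trivial. The existence and strong continuity of this $\Trs$-action on $A$, together with the group property, are inherited from $\tilde\gamma$. From $\tilde\gamma_{z,1}(S_\mu)=zS_\mu$, $\tilde\gamma_{z,1}(S_\nu)=zS_\nu$, $\tilde\gamma_{z,1}(S_\xi)=S_\xi$ and the definitions of $\Shat_k$, $\Scheck_k$, $\Sc_n$ (each of $\Shat_k,\Scheck_k$ contains exactly $k$ factors from $\{S_\mu,S_\nu\}$, while $\Sc_n$ involves only $S_\xi,S_\xi^*$) one reads off $\sigma_z(\Shat_k)=z^k\Shat_k$, $\sigma_z(\Scheck_k)=z^k\Scheck_k$ and $\sigma_z(\Sc_n)=\Sc_n$; multiplication then gives \eqref{eq:sigmacheck} and \eqref{eq:sigmahat}.

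To verify \eqref{eq:sigma}, I would work on the orthonormal basis $\{e_{k,n,l}\}$ furnished by Lemma \ref{lem:onbasis}. Since $H$ is positive with $H e_{k,n,l}=q^{2(k-l)}e_{k,n,l}$, functional calculus gives $H^{it}e_{k,n,l}=z^{k-l}e_{k,n,l}$ for $z=q^{2it}$. For $a=\Scheck_k U_n\Scheck_l^*$ a short computation using \eqref{eq:graphcalc1} and \eqref{eq:SSc} yields $a\cdot e_{k',n',l'}=\delta_{l,k'}\,e_{k,n+n',l'}$, and then
\begin{equation*}
H^{it}aH^{-it}e_{k',n',l'}=\delta_{l,k'}z^{-(k'-l')}z^{k-l'}e_{k,n+n',l'}=z^{k-l}(a\cdot e_{k',n',l'}),
\end{equation*}
where the Kronecker $\delta_{l,k'}$ allows the replacement of $k'$ by $l$ in the exponent. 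For $a=\Shat_k\Shat_l^*$ a parallel calculation using \eqref{eq:graphcalc3}, \eqref{eq:graphcalc5} and \eqref{eq:Shat} gives $a\cdot e_{k',n',l'}=e_{k+k'-l,n',l'}$ when $k'\geq l+1$ and $0$ otherwise; either way the eigenvalue factors from $H^{\pm it}$ collapse to $z^{k-l}$, matching $\sigma_z(a)\cdot e_{k',n',l'}$.

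Since these two families of elements span a norm-dense subalgebra of $A$ by Lemma \ref{lem:densemonom2}, and both $\sigma_z(\cdot)$ and $\mathrm{Ad}(H^{it})(\cdot)$ are norm-continuous on $B(\HHt)$, \eqref{eq:sigma} extends first to all basis vectors $\xi$ and all $a\in A$ by linearity and isometry, and then to arbitrary $\xi\in\HHt$ by density. Uniqueness is then automatic: for $0<q<1$ the Haar state is faithful, so $A$ embeds into $B(\HHt)$, and \eqref{eq:sigma} fixes $\sigma_z(a)$ as an operator on $\HHt$ and hence as an element of $A$. The main technical point will be the $\Shat_k\Shat_l^*$ case in step two, since these elements are not norm limits of the $\Scheck_k U_n\Scheck_l^*$ family; the direct basis-vector computation above avoids having to interpret the $\HHt$-convergent expansion $\Shat_k\Shat_l^*=\sum_{j=1}^\infty\Scheck_{k+j}\Scheck_{l+j}^*$ from the proof of Lemma \ref{lem:onbasis} inside the non-closable operation $a\mapsto H^{it}aH^{-it}$.
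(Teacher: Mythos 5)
Your proposal is correct and follows essentially the same route as the paper: the paper also defines $\sigma_z=\gamma_{z,1}$ (which by Proposition \ref{prop:deggraph} is your $\tilde\gamma_{z,1}$), checks \eqref{eq:sigma} on $A_c$ against the eigenbasis of Lemma \ref{lem:onbasis}, extends by an $\epsilon/3$ density argument, and derives uniqueness from faithfulness of the Haar state. The only difference is that you write out explicitly the basis-vector computations (including the $\Shat_k\Shat_l^*$ case) that the paper dismisses with ``it is clear''.
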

\begin{proof}
  We define $\sigma_z = \gamma_{z,1}$ where $\gamma$ is the action of
  $\TTrs$ on $A$ defined in the proof of Proposition \ref{prop:exthaar}.
  Since $\gamma$ is a strongly continuous group action by
  \Cstar-algebra isomorphisms, so is $\sigma$.
  Comparing Equations \eqref{eq:sigmacheck} and \eqref{eq:sigmahat} with Lemma
  \ref{lem:onbasis}, it is clear that Equation \eqref{eq:sigma} holds
  whenever $a \in A_c$ and $\xi \in \HHt$. The usual $\epsilon/3$
  proof now shows that Equation \eqref{eq:sigma} holds on all of $A$.
  Uniqueness follows from the faithfulness of the Haar state.  That
  is,
$A \to B(\HHt)$ is injective.  This follows from considering
  the action on $1 \in \HHt$.  Therefore Equation \eqref{eq:sigma} determines
  $\sigma_z$ uniquely.
\end{proof}

{\bf Observation}  If $\rho$ is a
path in $E^*$ then we denote by $|\rho|'$ the number of edges in
$\rho$ counting only the edges $\mu$ and $\nu$.  Then the action $\sigma$
is analogous to the gauge action in \cite{PR} while $|\cdot|'$
is analogous to path length in \cite{PR}. As a result many of the
proofs in \cite{PR} also apply here with only these changes allowing us to
avoid repetition of these arguments.
\section{The Kasparov module}
\label{sec:KK}
We denote by $F=A^\sigma$ the fixed point algebra for the KMS action
of Lemma \ref{lem:stuff}.
Since this action of the reals factors through the circle, we
can define a positive faithful expectation $\Phi : A \to F$ by
\begin{equation}\label{eq:something}
  \Phi(x) = \frac{-\log q^2}{2\pi}
\int_0^{(2\pi)/-\log q^2} \sigma_z(x) dt,\ \ z=e^{it\log q^2}.
\end{equation}
We form the norm $\|\cdot\|_X$ on $A$ by setting
$  \|a\|_X^2 := \|\Phi(a^*a)\|_F.$
Note that this
norm is always greater than the GNS norm $h(a^*a)$ since
\begin{equation*}
  h(a^*a) = h(\Phi(a^*a)) \leq \| \Phi(a^*a) \| h(1)=\| \Phi(a^*a) \| 
\end{equation*}
  Therefore we can consider the
completion $X$ of $A$ with respect to $\|\cdot\|_X$ to lie in
$\HH_h$.  We denote by $X_c$ the image of $A_c$ in $X$.  For each
$z\in\Trs$, $\sigma_z$ is a norm continuous map from $A$ to $A$.  We also
have
\begin{equation*}
  \| \sigma_z(x) \|_X = \| \Phi( \sigma_z(x)^*\sigma_z(x) ) \|_A =
  \| \Phi( \sigma_z(x^*x) ) \| = \| \Phi( x^*x ) \| = \| x \|_X
\end{equation*}
for all $x \in A$.  Hence for all $z\in\Trs$, $\sigma_z$ is an isometry with
respect to the
norm $\|\cdot\|_X$,
and so extends to an isometry from $X$ to $X$. This defines a
strongly continuous action of $\Trs$ on $X$. We define
an $F$-valued inner product on $X$ by
$  (x|y)_F = \Phi(x^*y).$

Given $m \in \ZZ$ and $x \in X$
define the map $\Phi_m : X \to X$ by
\begin{equation}
  \label{eq:Phidefn}
  \Phi_m(x) = \frac{-\log q^2}{2\pi}
\int_0^{2\pi/-\log q^2} z^{-m} \sigma_z(x) dt,\ \
  z=e^{it\log q^2}.
\end{equation}

\begin{lem}\label{lem:PhiGrad}
The operators $\Phi_m$ restrict to continuous operators from $A$ to $A$ and
as
such they are the projections onto the space $\oplus_{n \in \ZZ}
A[m,n]$.
\end{lem}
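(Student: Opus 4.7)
The plan is to reduce this to the setup already worked out in Proposition \ref{prop:exthaar}. By Lemma \ref{lem:stuff}, the action $\sigma$ on $A$ is just the restriction $\gamma_{z,1}$ of the two-parameter action $\gamma$ of $\TTrs$ to the first circle factor, and it is strongly continuous on $A$. So up to the change of variable $z=e^{it\log q^2}$ in \eqref{eq:Phidefn}, the formula for $\Phi_m$ is identical in structure to that for the two-variable operators $\Phi_{m,n}$ of \eqref{eq:PhimnDef}, but integrating only the first torus variable.

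First I would show that, for $x\in A$, the integrand $t\mapsto z^{-m}\sigma_z(x)$ (with $z=e^{it\log q^2}$) is continuous from the compact circle into $A$, hence Bochner integrable, so that $\Phi_m(x)$ is \emph{a priori} an element of $A$. Since $\sigma_z$ acts by $C^*$-algebra isomorphisms (isometries), the same estimate as in the proof of Proposition \ref{prop:exthaar} gives $\|\Phi_m(x)\|_A\le\|x\|_A$, showing $\Phi_m$ is continuous on $A$. The inclusion $A\hookrightarrow X$ is continuous because $\|a\|_X^2=\|\Phi(a^*a)\|_F\le\|a^*a\|_A=\|a\|_A^2$, and the Bochner integral in $A$ maps to the Bochner integral in $X$, so this restriction is consistent with the formula defining $\Phi_m$ on $X$.

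Next, I would verify the projection claim on homogeneous elements and then extend by density. For $x\in A[m',n']$, the action $\gamma_{z,1}(x)=z^{m'}\cdot 1^{n'}x=z^{m'}x$ gives $\sigma_z(x)=z^{m'}x$, and the defining integral of $\Phi_m$ computes to $\delta_{m,m'}x$ by the elementary identity $\frac{-\log q^2}{2\pi}\int_0^{2\pi/-\log q^2} z^{m'-m}\,dt=\delta_{m,m'}$. By the decomposition $A=\bigoplus_{(m,n)\in\ZZ^2} A[m,n]$ from Proposition \ref{prop:exthaar}, $A_c$ (finite sums of pure-degree elements) is dense in $A$, and on this dense set $\Phi_m$ clearly acts as the idempotent projecting onto $\bigoplus_{n\in\ZZ}A[m,n]$. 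By continuity of $\Phi_m$ on $A$, this extends to all of $A$, giving both the projection property $\Phi_m^2=\Phi_m$ and identification of its range.

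There is no real obstacle here: the content is already in Proposition \ref{prop:exthaar}, and the only thing to check carefully is that definitions in $X$ and in $A$ agree, which follows from the norm inequality $\|\cdot\|_X\le\|\cdot\|_A$ and the fact that the Bochner integral commutes with continuous linear maps between Banach spaces. The only mildly subtle point is the change of variable from the circle $\{z\in\Trs\}$ parametrised by $t\in[0,2\pi/-\log q^2)$ used in \eqref{eq:Phidefn} to the usual parametrisation of $\Trs$ used in \eqref{eq:PhimnDef}; both give the same normalised Haar measure on $\Trs$, so the two orthogonality relations coincide.
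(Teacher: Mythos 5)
Your proposal is correct and follows essentially the same route as the paper: continuity of $\Phi_m$ from the isometric, strongly continuous action, followed by identification of $\Phi_m$ with $\oplus_{n}\Phi_{m,n}$ on a dense set of homogeneous elements and extension by norm continuity. The paper verifies the coincidence on the specific graph-algebra monomials via Equations \eqref{eq:sigmacheck} and \eqref{eq:sigmahat} rather than on abstract elements of pure degree, but this is the same computation, and your extra care about the compatibility of the Bochner integrals in $A$ and $X$ is a harmless refinement of what the paper leaves implicit.
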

\begin{proof}
The first statement follows from the definition of $\Phi_m$, and the
fact
that $\sigma_z$ is a strongly continuous action on $A$.   Since the
$\Phi_m$ and the projections $\oplus_n\Phi_{m,n}$ are both norm
continuous maps on $A$, it suffices to show that they coincide on
the monomials of Lemma \ref{lem:densemonom2}.  This follows from Equations
\eqref{eq:sigmacheck} and \eqref{eq:sigmahat} and the definition of
the algebra grading.
\end{proof}

The proofs of the next three statements are minor reworkings of the
proofs of the analogous statements in \cite{PR}, as the reader may check.

\begin{lem}\label{lem:Phistuff}
  The operators $\Phi_m$ are adjointable endomorphisms of the
  $F$-module $X$ such that $\Phi_m^* = \Phi_m = \Phi_m^2$ and
  $\Phi_l\Phi_m = \delta_{l,m}\Phi_l$.  The sum $\sum_{m\in\ZZ}\Phi_m$
  converges strictly to the identity operator in $X$.
\end{lem}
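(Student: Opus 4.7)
The plan is to mimic the corresponding argument in \cite{PR}, exploiting the strong continuity and isometry properties of $\sigma_z$ on $X$ established just above the statement, together with the fact that $F$ is the fixed-point algebra.

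First I would check the algebraic identities $\Phi_m^2=\Phi_m$ and $\Phi_l\Phi_m=\delta_{l,m}\Phi_l$ by a standard Fourier-type calculation: substituting the defining integral \eqref{eq:Phidefn} into itself and using that $\sigma$ is a group action, the two integrals factor and collapse via $\frac{-\log q^2}{2\pi}\int_0^{2\pi/-\log q^2} z^{k-l}\,dt = \delta_{k,l}$. $F$-linearity $\Phi_m(xf)=\Phi_m(x)f$ for $f\in F$ is immediate since $\sigma_z(f)=f$. Self-adjointness with respect to the $F$-valued inner product, $(\Phi_m x\mid y)_F = (x\mid \Phi_m y)_F$, follows by pulling the integral out of $\Phi$ (which is legitimate because $\Phi$ is norm-continuous and $\sigma_z$ is norm-continuous on $A$), then using $\Phi\circ\sigma_z=\Phi$ on the fixed-point expectation and changing variables $z\mapsto \bar z$ in the integral. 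Adjointability is then automatic from $\Phi_m^*=\Phi_m$ and continuity: for each $y\in X$ the map $x\mapsto(\Phi_m x\mid y)_F=(x\mid \Phi_m y)_F$ is bounded, so $\Phi_m$ is adjointable with adjoint itself.

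The remaining, and main, obstacle is the strict convergence $\sum_{m\in\ZZ}\Phi_m\to \mathrm{id}_X$ in $\End_F(X)$. Strict convergence means that for each $x\in X$ we must show $\sum_{|m|\leq N}\Phi_m(x)\to x$ in the norm $\|\cdot\|_X$, and similarly for the adjoints (which is the same statement since each $\Phi_m$ is self-adjoint). I would first verify this on the dense subspace $X_c$: by Lemma \ref{lem:densemonom2} every element of $A_c$ is a finite linear combination of monomials $\Shat_k\Shat_l^*$ and $\Scheck_k\Sc_n\Scheck_l^*$, each of which is an eigenvector of $\sigma_z$ by \eqref{eq:sigmacheck} and \eqref{eq:sigmahat}; hence $\Phi_m$ picks out the grade-$m$ component and the sum $\sum_m\Phi_m$ is actually finite on each such element and reproduces it by Lemma \ref{lem:PhiGrad}.

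To extend to all of $X$, I would use the uniform bound $\|\sum_{|m|\leq N}\Phi_m\|\leq 1$, which I would derive as follows: for $x\in A$, orthogonality $\Phi_l\Phi_m=\delta_{l,m}\Phi_l$ together with $F$-linearity gives $\Phi((\sum_{|m|\leq N}\Phi_m x)^*(\sum_{|m|\leq N}\Phi_m x)) = \sum_{|m|\leq N}\Phi(\Phi_m(x)^*\Phi_m(x)) \leq \Phi(x^*x)$ in $F$, so $\|\sum_{|m|\leq N}\Phi_m(x)\|_X\leq\|x\|_X$. Combined with an $\varepsilon/3$ argument approximating an arbitrary $x\in X$ by an element of $X_c$, this yields norm convergence of the partial sums to $x$ for every $x\in X$, which is precisely strict convergence of $\sum_m\Phi_m$ to the identity in the endomorphism algebra. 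The orthogonality estimate for $\Phi$ in $F$ is the only step where care is required, and it is the same argument as in \cite{PR}.
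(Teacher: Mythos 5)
Your proposal is correct and follows essentially the same route as the paper, which simply defers to the analogous statements in \cite{PR}: Fourier orthogonality of the $\Phi_m$ from the group property of $\sigma$, self-adjointness via the $\sigma$-invariance of the expectation $\Phi$ and the change of variables $z\mapsto\bar z$, and strict convergence obtained from the finite-sum identity on the dense graded subspace $X_c$ together with the Bessel-type uniform bound $\|\sum_{|m|\leq N}\Phi_m\|\leq 1$. You have in effect supplied the details the paper omits, and your treatment of the cross terms (killed by $\Phi$ because $\Phi_l(x)^*\Phi_m(x)$ has pure degree $m-l$) is exactly the point where care is needed.
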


\begin{cor}\label{cor:xsum}
For all $x \in X$, the sum $\sum_{m \in \ZZ} x_m$ where $x_m =
\Phi_m x$, converges in $X$ to $x$.
\end{cor}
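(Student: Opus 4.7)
The plan is to deduce this directly from Lemma \ref{lem:Phistuff}. That lemma already tells us two things we will use together: the $\Phi_m$ are mutually orthogonal self-adjoint projections (so $\Phi_l\Phi_m = \delta_{l,m}\Phi_l$ and $\Phi_m^* = \Phi_m$), and the sum $\sum_{m \in \ZZ} \Phi_m$ converges strictly on the Hilbert $F$-module $X$ to the identity operator.

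First I would recall the definition of strict convergence in $\End_F^*(X)$: a net of adjointable operators $T_\alpha$ converges strictly to $T$ precisely when $T_\alpha x \to Tx$ and $T_\alpha^* x \to T^*x$ in the $X$-norm for every $x \in X$. Applied to the net of partial sums $T_F := \sum_{m\in F}\Phi_m$ indexed by finite subsets $F \subset \ZZ$, strict convergence to the identity gives $\|T_F x - x\|_X \to 0$ for each $x \in X$ (the adjoint condition being automatic since each $\Phi_m$ is self-adjoint, hence so is $T_F$).

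Next I would use the orthogonality relations to identify the partial sum: for any finite $F \subset \ZZ$,
\begin{equation*}
T_F x = \sum_{m \in F} \Phi_m x = \sum_{m \in F} x_m.
\end{equation*}
Combining this with the preceding step gives $\sum_{m \in F} x_m \to x$ in $X$ as $F \nearrow \ZZ$ through finite subsets, which is the asserted convergence of $\sum_{m \in \ZZ} x_m$ to $x$. There is no real obstacle here; the statement is purely a bookkeeping consequence of Lemma \ref{lem:Phistuff}, and the only mildly delicate point is making sure one interprets the two-sided sum in the unconditional (finite partial-sum) sense, which is exactly what strict convergence of a sum of mutually orthogonal projections delivers.
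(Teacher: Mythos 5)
Your proof is correct and is essentially the paper's intended argument: the paper states this as an immediate corollary of Lemma \ref{lem:Phistuff} (deferring details to the analogous statements in \cite{PR}), and unpacking the strict convergence of $\sum_m \Phi_m$ to the identity on $X$ is exactly what is required. Your careful interpretation of the two-sided sum via finite partial sums is the right reading.
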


We may now define an unbounded
self-adjoint regular operator $\DD$ on $X$ as in \cite{PR}.

\begin{prop}\label{prop:D}
Let $\XD$ be the set of all $x \in X$ such that
$
  \| \sum_{m\in\ZZ} m^2 (x_m|x_m)_F \| < \infty
$
where $x_m = \Phi_m x$.  Define the operator $\DD : \XD \to X$ by
   $ \DD x = \sum_{m \in \ZZ} mx_m.$
Then $\DD$ is a self-adjoint regular unbounded operator on $X$.
\end{prop}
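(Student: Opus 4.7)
The operator $\DD$ is the generator of the strongly continuous circle action $\sigma$ on $X$ by $F$-linear isometries, with the $m$th spectral subspace given by $\Phi_m X$. I would prove the proposition in three stages, following the template used in \cite{PR} and relying on the adjointable projections $\Phi_m$ from Lemma \ref{lem:Phistuff}.

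First, well-definedness on $\XD$. The mutual orthogonality $\Phi_m \Phi_n = \delta_{m,n}\Phi_m$ together with $\Phi_m^* = \Phi_m$ gives $(x_m|x_n)_F = 0$ for $m \neq n$, and hence the Pythagorean identity
\[
\Big\| \sum_{M \leq |m| \leq N} m x_m \Big\|_X^2 = \Big\| \sum_{M \leq |m| \leq N} m^2 (x_m | x_m)_F \Big\|_F .
\]
The hypothesis $\| \sum_m m^2 (x_m|x_m)_F \| < \infty$ forces the partial sums to be Cauchy in $X$, so $\DD x \in X$ is well-defined. Density of $\XD$ follows because it contains every finite spectral combination and such combinations are dense in $X$ by Corollary \ref{cor:xsum}.

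Second, symmetry. The same orthogonality gives $(\DD x | y)_F = \sum_m m (x_m | y_m)_F = (x | \DD y)_F$ for $x, y \in \XD$, the sum converging in $F$ by Cauchy--Schwarz applied term-by-term to the $\ell^2$-type spectral weights.

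Third, to obtain self-adjointness and regularity in one stroke I would verify the standard resolvent criterion for symmetric operators on Hilbert $C^*$-modules: namely, that $\DD \pm i$ are surjective onto $X$ with bounded adjointable inverses. Define
\[
R_\pm x = \sum_{m \in \ZZ} (m \mp i)^{-1} x_m, \qquad x \in X.
\]
Since $|m \mp i|^{-1} \leq 1$, the Pythagorean identity shows $R_\pm$ is a contraction on $X$; the formal computation $(R_\pm x | y)_F = (x | R_\mp y)_F$ makes $R_\pm$ adjointable with adjoint $R_\mp$. The image lies in $\XD$ because $m^2|m\mp i|^{-2} \leq 1$ uniformly in $m$, and a spectral-subspace-by-spectral-subspace calculation gives $R_\pm (\DD \mp i) = I|_{\XD}$ and $(\DD \mp i) R_\pm = I$. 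The self-adjoint regular conclusion is then immediate from the criterion.

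The only real obstacle is ensuring $R_\pm X \subseteq \XD$ genuinely in the module sense rather than in an algebraic direct-sum sense: this is precisely the content of the Pythagorean contraction estimate together with the completeness of $X$, which in turn reduces the whole argument to the orthogonality and adjointability of the spectral projections $\Phi_m$ already established.
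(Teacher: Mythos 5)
Your proposal is correct and follows essentially the same route as the paper, which gives no proof of its own but defers to the analogous statement in \cite{PR}: orthogonality of the adjointable projections $\Phi_m$, the resulting Pythagorean identity, symmetry of $\DD$ on finite spectral sums, and explicit construction of the bounded adjointable resolvents $(\DD\pm i)^{-1}$ to invoke the standard self-adjointness and regularity criterion for operators on Hilbert $C^*$-modules. The only point worth making explicit is that the domain condition $\bigl\|\sum_m m^2(x_m|x_m)_F\bigr\|<\infty$ should be read as norm convergence of that positive series in $F$ (so its tails vanish), which is what makes your Cauchy argument for $\sum_m m x_m$ go through.
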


To show that the pair $(X,\DD)$ gives us a Kasparov module, we need to
analyse the spectral projections of $\DD$ as endomorphisms of the
module $X$. To do this, we recall the following
\begin{defn}
Rank $1$-operators on the right $C^*$-$F$-module $X$, 
  $\Theta_{x,y}$ for some $x,y\in X$, are given by
  \begin{equation}
    \label{eq:rkoneendos}
    \Theta_{x,y}(z) = x(y|z)_F,\ \ \ x,y,z\in X.
  \end{equation}
  Denote by $\End_F^{00}(X)$ the linear span of the rank one operators,
which
  we call the finite rank operators.
  Denote by $\End_F^{0}(X)$ the closure of $\End_F^{00}(X)$ with
  respect to the operator norm $\|\cdot\|_{\End}$ on $\End_F(X)$. We
  call elements of $End_F^0(X)$ compact endomorphisms.
\end{defn}

\begin{lem}\label{lem:Phicpct}
  For all $a \in A$ and $m \in \ZZ$ the operator $a \Phi_m$ belongs
  to $\End_F^{00}(X)$.
\end{lem}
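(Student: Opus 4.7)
The plan is to prove the stronger statement that $\Phi_m$ itself lies in $\End_F^{00}(X)$; the lemma then follows immediately, because left multiplication satisfies $a\cdot\Theta_{u,v}=\Theta_{au,v}$, so $a\Phi_m=\sum_i\Theta_{av_i,v_i}$ is a finite sum of rank-one endomorphisms whenever $\Phi_m=\sum_i\Theta_{v_i,v_i}$ is. The right choice of generators $v_i$ is dictated by the graph $E$: the only edges of nonzero $\sigma$-degree are $\mu$ (the loop at $v$) and $\nu$ (from $v$ to $w$), while $\xi$ is absorbed into $F$. So for $m\geq 1$ I would take $v_1=\Shat_m$ and $v_2=\Scheck_m$, corresponding to the two edge-paths $\mu^m$ and $\mu^{m-1}\nu$ of $\sigma$-length $m$; for $m=0$ one element suffices, $v_1=1$, since $\Theta_{1,1}(x)=\Phi(x)=\Phi_0(x)$; and for $m\leq -1$ I would take the adjoints $v_1=\Shat_{|m|}^*$, $v_2=\Scheck_{|m|}^*$.

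To verify the decomposition $\Phi_m=\Theta_{\Shat_m,\Shat_m}+\Theta_{\Scheck_m,\Scheck_m}$ for $m\geq 1$, I would evaluate both sides on the orthonormal basis $\{e_{k,n,l}\}$ of Lemma \ref{lem:onbasis}. On a basis element of $\sigma$-degree $\neq m$, both rank-one terms vanish, because $\Phi$ annihilates graded elements of nonzero degree. On $e_{k,n,l}$ with $k-l=m$, the claim reduces to $(\Shat_m\Shat_m^* + \Scheck_m\Scheck_m^*)\,e_{k,n,l}=e_{k,n,l}$, and the computation splits at the boundary $k=m$: when $k\geq m+1$, equation \eqref{eq:graphcalc5} gives $\Shat_m\Shat_m^*\Scheck_k=\Scheck_k$ while \eqref{eq:graphcalc1} gives $\Scheck_m^*\Scheck_k=0$; when $k=m$ (so $l=0$) the roles switch, as $\Shat_m^*\Scheck_m=S_\mu^*S_\nu=0$ by \eqref{eq:graphcalc0} and $\Scheck_m\Scheck_m^*\Scheck_m\Sc_n=\Scheck_m\Sc_n$ by \eqref{eq:graphcalc1}. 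Boundedness of the $\Theta_{v_i,v_i}$ then extends the identity to all of $X$ from the dense span of the basis vectors.

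The main obstacle will be recognising that $\Phi_m$ is genuinely \emph{finite}-rank and not merely compact, i.e.\ that the $m$-th $\sigma$-spectral subspace of $X$ is finitely generated as a right $F$-module. This finiteness is special to the simple structure of $E$ (two vertices, three edges), and is what permits the two-term decomposition above; for a more general graph one would obtain only a strictly convergent series of $\Theta_{S_\rho,S_\rho}$ over $|\rho|=m$, and hence only compactness. The case $m\leq -1$ reduces to $m\geq 1$ by taking adjoints, and $m=0$ is immediate. Once the decomposition of $\Phi_m$ is in hand, the identity $a\Phi_m=\Theta_{a\Shat_m,\Shat_m}+\Theta_{a\Scheck_m,\Scheck_m}$ (with the appropriate variants for $m\leq 0$) completes the proof.
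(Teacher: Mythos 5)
Your reduction to showing $\Phi_m\in\End_F^{00}(X)$, your choice of generators $\Shat_m,\Scheck_m$ (resp.\ $1$, resp.\ the adjoints), and the resulting decomposition of $\Phi_m$ are exactly those of the paper. The gap is in the verification step. You check the identity $\Phi_m=\Theta_{\Shat_m,\Shat_m}+\Theta_{\Scheck_m,\Scheck_m}$ only on the orthonormal basis $\{e_{k,n,l}\}$ of Lemma \ref{lem:onbasis} and then assert that boundedness extends it ``to all of $X$ from the dense span of the basis vectors.'' But the $e_{k,n,l}$ span a dense subspace of the GNS space $\HHt$, \emph{not} of the $C^*$-module $X$: the identity you are proving is an identity of $F$-module endomorphisms of $X$, and the relevant dense subspace is $X_c$, which is spanned by monomials of \emph{two} types, $\Scheck_k\Sc_n\Scheck_l^*$ and $\Shat_k\Shat_l^*$. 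The second type is not in the $\|\cdot\|_X$-closure of the span of the first: the expansion $\Shat_k\Shat_l^*=\sum_{j\geq 1}\Scheck_{k+j}\Scheck_{l+j}^*$ holds only in $\HHt$ (the paper explicitly warns it fails in norm), and indeed for any finite linear combination $y$ of $\Scheck$-monomials one has $(\Shat_k\Shat_l^*-y)\Shat_N\Shat_N^*=\Shat_{k+N-l}\Shat_N^*$ for large $N$ by \eqref{eq:graphcalc5}, whence $\|\Shat_k\Shat_l^*-y\|_X\geq 1$. So your extension step fails as stated, and the monomials $\Shat_{m+k}\Shat_k^*$, which constitute half of the degree-$m$ part of $X_c$, are never checked.

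The fix is either to verify the identity on those monomials directly, as the paper does via $\Scheck_m\Scheck_m^*+\Shat_m\Shat_m^*=\Shat_{m-1}\Shat_{m-1}^*$ (for $m>1$; $=p_v$ for $m=1$) together with \eqref{eq:graphcalc5}, which gives $(\Scheck_m\Scheck_m^*+\Shat_m\Shat_m^*)\Shat_{m+k}\Shat_k^*=\Shat_{m+k}\Shat_k^*$; or to work entirely in $\HHt$, which requires knowing in advance that both sides extend to bounded operators on $\HHt$ and that $X$ embeds injectively in $\HHt$ — facts only established later (Lemma \ref{lm:endo-extends}) and not invoked by you. Your computation on the $\Scheck$-type basis vectors is correct, and your structural remark about why $\Phi_m$ is genuinely finite rank (rather than merely compact) for this particular graph is accurate, but as written the proof does not establish the lemma.
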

\begin{proof}
  For any rank $1$ operator $\Theta_{x,y}$ we have $a\Theta_{x,y} =
  \Theta_{ax,y}$ so it suffices to show that
  for each $m \in \ZZ$ we have $\Phi_m \in
  \End_F^{00}(X)$.  Indeed, we show the following;
  \begin{equation}
    \label{eq:decompPhim}
    \Phi_m =
    \begin{cases}
      \Theta_{\Shat_m,\Shat_m} + \Theta_{\Scheck_m,\Scheck_m}, & m
      \geq 1,\\
      \Theta_{1,1}, & m=0,\\
      \Theta_{\Shat_m^*,\Shat_m^*} +
      \Theta_{\Scheck_m^*,\Scheck_m^*}, & m \leq -1. \
    \end{cases}
  \end{equation}
  Recall that we can write any $x$ in the form $x = \sum_{l \in \ZZ}
  x_l$ where $x_l = \Phi_l x$.  Therefore if $y \in \Phi_m X$,
  we have
  \begin{equation*}
    \Theta_{y,y}x = \Theta_{y,y}\sum_{l\in\ZZ}x_l = \sum_{l \in \ZZ}
    \Theta_{y,y}x_l = \sum_{l \in \ZZ} y\Phi(y^*x_l).
  \end{equation*}
  Now by Lemma \ref{lem:PhiGrad} we have $\Phi(y^*x_l) = \delta_{l,m}
  y^*x_l$ and so the above implies that
$\Theta_{y,y}x = yy^* \Phi_m x.$

  Thus it suffices to prove that whenever $x \in \Phi_mX$ we
  have
  \begin{equation}\label{eq:tauRTP}
    x =
    \begin{cases}
      \Shat_m\Shat_m^*x + \Scheck_m\Scheck_m^*x, & m
      \geq 1,\\
      x, & m=0,\\
      \Shat_m^*\Shat_m x +
      \Scheck_m^*\Scheck_m x, & m \leq -1. \
    \end{cases}
  \end{equation}
  By continuity it
  suffices to consider the case where $x \in X_c$.  By linearity we
  are then reduced to the case where $x$ is one of the monomials of
  Lemma \ref{lem:densemonom2}.

  When $m=0$ there is nothing to prove.
  When $m \geq 1$, $x$ has the form
  $\Scheck_{m+k} \Sc_n \Scheck_k^*$ for some $k\geq 0$ and
  $n \in \ZZ$ or $\Shat_{m+k} \Shat_k^*$ for some $k\geq 0$.
Now $\Scheck_m\Scheck_m^*+\Shat_m\Shat_m^*=\Shat_{m-1}\Shat_{m-1}^*$
for $m>1$ and $\Scheck_1\Scheck_1^*+\Shat_1\Shat_1^*=p_v$ by
 the definitions. So for $m\geq 1$,
Lemma \ref{lem:graphcalc} gives us
$$(\Scheck_m\Scheck_m^*+\Shat_m\Shat_m^*)x
=\Shat_{m-1}\Shat_{m-1}^*x=x.$$
   When $m \leq -1$
  note that $\Shat_m^* \Shat_m = p_v$ and $\Scheck_m^* \Scheck_m =
  p_w$, and $p_v+p_w=1_A$ acts as the identity of the $C^*$-module $X$,
  so we are done.
\end{proof}

Lemma \ref{lem:Phicpct} underlies the proof of the following 
result which is analogous to  \cite{PR}.

\begin{lem}\label{lem:cpctV}
The operator $a(1+\DD^2)^{-1/2}$ is a compact operator for all $a \in
A$.
  Let $V = \DD(1+\DD^2)^{-1/2}$.  Then $(X,V)$ is a Kasparov $A,F$ module,
  and so represents a class in
  $KK^1(A,F)$.
\end{lem}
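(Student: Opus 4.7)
The proof has two parts: compactness of $a(1+\DD^2)^{-1/2}$, and verifying the Kasparov module axioms for $V=\DD(1+\DD^2)^{-1/2}$. The first part is the essential new input; the second will follow by standard manipulations that mirror \cite{PR}.

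For the compactness claim I would exploit the spectral decomposition $\DD=\sum_{m\in\ZZ}m\Phi_m$ from Proposition \ref{prop:D}. Setting $P_N=\sum_{|m|\leq N}\Phi_m$, the functional calculus gives
$$a(1+\DD^2)^{-1/2}P_N=\sum_{|m|\leq N}(1+m^2)^{-1/2}a\Phi_m,$$
which is a finite linear combination of operators lying in $\End_F^{00}(X)$ by Lemma \ref{lem:Phicpct}. For the tail, since $(1+\DD^2)^{-1/2}(1-P_N)$ acts on $\Phi_m X$ as multiplication by $(1+m^2)^{-1/2}$ with $|m|>N$, one has the endomorphism-norm estimate
$$\|a(1+\DD^2)^{-1/2}(1-P_N)\|_{\End}\leq \|a\|\,(1+(N+1)^2)^{-1/2}\longrightarrow 0.$$
Hence $a(1+\DD^2)^{-1/2}$ is a norm limit of finite-rank endomorphisms, so belongs to $\End_F^{0}(X)$.

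For the Kasparov module axioms, self-adjointness of $V$ and $\|V\|\leq 1$ follow from the functional calculus for self-adjoint regular operators. Writing $1-V^2=(1+\DD^2)^{-1}=(1+\DD^2)^{-1/2}(1+\DD^2)^{-1/2}$ and using the first part, $a(1-V^2)$ is a product of a compact and a bounded endomorphism, hence compact. To handle $[V,a]$ I would work on the dense subalgebra $A_c$: by Proposition \ref{prop:deggraph} each generator has pure first degree $d$, and so shifts $\Phi_m X$ into $\Phi_{m+d}X$; consequently $[\DD,a]=d\cdot a$ is bounded. Feeding this into the integral representation
$$V=\frac{1}{\pi}\int_0^\infty \lambda^{-1/2}\DD(1+\DD^2+\lambda)^{-1}\,d\lambda,$$
the commutator $[V,a]$ decomposes into terms involving $[\DD,a]$ and resolvents of $\DD$; compactness of $a(1+\DD^2)^{-1/2}$ together with boundedness of $[\DD,a]$ produces a norm-convergent integral of compact operators.

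The principal technical obstacle is this final commutator estimate: verifying that the integrand is uniformly bounded in $\End_F^{0}(X)$ and that the integral converges in operator norm. Since the only structural ingredients needed are Lemma \ref{lem:Phicpct} and the fact that generators of $A_c$ have pure grading, both of which are already in place, the argument of \cite{PR} carries over with only cosmetic modifications, yielding a well-defined class $[(X,V)]\in KK^1(A,F)$.
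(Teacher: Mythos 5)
Your argument is correct and is essentially the proof the paper has in mind: the paper omits the details, stating only that Lemma \ref{lem:Phicpct} underlies the result and that the proof is analogous to \cite{PR}, and your two steps (approximating $a(1+\DD^2)^{-1/2}$ by the finite-rank operators $\sum_{|m|\leq N}(1+m^2)^{-1/2}a\Phi_m$ with the tail bound $(1+(N+1)^2)^{-1/2}\|a\|$, then controlling $[V,a]$ via boundedness of $[\DD,a]$ on the graded dense subalgebra $A_c$ and the resolvent integral) are exactly the ingredients of that argument.
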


We shall refer to this Kasparov module as the Haar module for
$A,F$, since $(X,V)$ only depends on the
algebra $A$, the action $\s_t$  and the Haar state.

\section{K-theory}
\label{sec:Kth}
Recall the well known results $ K_0(A)=\ZZ$ and $K_1(A)=\ZZ$ from for example
\cite{HS}. The generators of these groups are $[1]$ and $[p_v+U_1]$
respectively.
Now we need to examine the algebra $F$.
\begin{lem}
The fixed point algebra
$F$ is the minimal unitization of the algebra $\oplus_{i=0}^\infty
C(S^1)$.
\end{lem}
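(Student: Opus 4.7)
The plan is to use the conditional expectation $\Phi_0:A\to F$ from \eqref{eq:something} to describe $F$ explicitly. Since $\Phi_0(A_c)$ is dense in $F$, and since $\Phi_0$ applied to a monomial from Lemma \ref{lem:densemonom2} annihilates all but the diagonal ($k=l$) terms by \eqref{eq:sigmacheck} and \eqref{eq:sigmahat}, I conclude that $F$ is the norm closure of the linear span of $\{\Shat_k\Shat_k^*:k\in\NN_0\}\cup\{\Scheck_k U_n\Scheck_k^*:k\in\NN_0,\,n\in\ZZ\}$.

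For each $k\in\NN_0$, I plan to identify the $C^*$-subalgebra $\CCC_k$ generated by $\{\Scheck_k U_n\Scheck_k^*\}_{n\in\ZZ}$ with $C(\Trs)$. The key step is that $a\mapsto \Scheck_k^* a\Scheck_k$ is a $*$-isomorphism from the corner $\Scheck_k\Scheck_k^* A\Scheck_k\Scheck_k^*$ onto $p_w A p_w$, using $\Scheck_k^*\Scheck_k=p_w$ from \eqref{eq:graphcalc1}. In the graph $E$, the only closed paths at $w$ are powers of the loop $\xi$, and $S_\xi$ is a unitary in $p_w A p_w$ since $S_\xi^*S_\xi=S_\xi S_\xi^*=p_w$; hence $p_w A p_w\cong C(\Trs)$. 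Transporting back, $\CCC_k\cong C(\Trs)$ with unit $\Scheck_k\Scheck_k^*$. Mutual orthogonality $\CCC_k\CCC_{k'}=0$ for $k\neq k'$ is immediate from $\Scheck_k^*\Scheck_{k'}=0$, so $\overline{\sum_k\CCC_k}\cong\bigoplus_{k=0}^\infty C(\Trs)$ (the $c_0$-direct sum, which is non-unital).

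It remains to absorb the generators $\Shat_k\Shat_k^*$ into $\CC\cdot 1_A+\bigoplus_{k=0}^\infty C(\Trs)$. Since $\Shat_0\Shat_0^*=p_v=1_A-p_w=1_A-\Scheck_0\Scheck_0^*$, iterating \eqref{eq:graphcalc4} with $k=l$ yields the telescoping identity
\[\Shat_k\Shat_k^*=1_A-\sum_{j=0}^k\Scheck_j\Scheck_j^*.\]
Therefore each $\Shat_k\Shat_k^*$ lies in $\CC\cdot 1_A+\sum_k \CCC_k$, giving the inclusion $F\subseteq \CC\cdot 1_A+\bigoplus_{k=0}^\infty C(\Trs)$, the minimal unitization. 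The reverse inclusion is clear since both $1_A$ and every $\CCC_k$ consist of $\sigma$-fixed elements.

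The main subtlety I anticipate is verifying rigorously that $p_w A p_w$ is $C(\Trs)$ and not a proper quotient. This can be checked either via the gauge-invariant uniqueness theorem for graph algebras applied to the subgraph with the single loop $\xi$ at $w$, or directly in the Haar GNS picture, where $S_\xi$ acts as a bilateral shift on $\overline{\mathrm{span}}\{e_{0,n,0}:n\in\ZZ\}$ and so has full spectrum $\Trs$.
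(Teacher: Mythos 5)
Your proof is correct, but it takes a genuinely different route from the paper's. The paper works in the $q$-deformation picture: it notes that $\Phi$ kills every monomial $a^{\pm i_1}b^{i_2}b^{*i_3}$ with $i_1\neq 0$, so that $F=C^*(b,1)=C(\spec(b))$ by normality of $b$ and Stone--Weierstrass, then computes $\spec(b^*b)=\{0\}\cup\{q^{2k}\}_{k\geq 0}$ from the defining relations and the spectral mapping theorem, and finally uses invariance of $\spec(b)$ under the rotation $\gamma_{1,w}$ to see that $\spec(b)$ is the one-point compactification of countably many circles. You instead stay entirely in the graph picture, decomposing $F$ into the mutually orthogonal corners $\CCC_k=\Scheck_k(p_wAp_w)\Scheck_k^*\cong C(\Trs)$ together with the unit, and absorbing the $\Shat_k\Shat_k^*$ via the telescoping identity $\Shat_k\Shat_k^*=1-\sum_{j\leq k}\Scheck_j\Scheck_j^*$ coming from \eqref{eq:graphcalc4}. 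Both arguments are complete; the one point you flag as delicate --- that $p_wAp_w$ is all of $C(\Trs)$ rather than a quotient --- is settled by exactly the rotation-invariance argument the paper applies to $b$ (the spectrum of the unitary $S_\xi$ in the corner is a nonempty closed rotation-invariant subset of $\Trs$), so you need not invoke the gauge-invariant uniqueness theorem or the faithfulness of the GNS representation (the latter would restrict you to $0<q<1$). What your version buys: it is manifestly independent of $q$ (the paper's step ``$F$ is densely spanned by powers of $b$'' degenerates at $q=0$, where $b=S_\xi$), and it produces the projections $\Scheck_k\Scheck_k^*$ directly as the units of the summands, which is precisely what Corollary \ref{cor:FKth} needs. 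What the paper's version buys: the identification $F=C(\spec b)$ makes the subsequent description of the generators of $K_0(F)$ as spectral projections of $b$, and of $K_1(F)$ via $q^{-k}b\Scheck_k\Scheck_k^*$, immediate.
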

\begin{proof}
  Given any $x \in F$ choose a sequence $(y_n) \subset A_{cc}$
with $y_n \to x$.  Then $\Phi y_n \to x$ also.
 Now by \cite[Theorem 1.2]{woronowicz}, $A_{cc}$ is spanned
  by monomials of the form $a^{i_1}b^{i_2}b^{*i_3}$ and $a^{*i_1}b^{i_2}b^{*i_3}$.  The expectation $\Phi$ acts
  on $A_{cc}$ is zero except on the monomials $b^{i_2}b^{*i_3}$,
  on which it acts as the identity.  Thus $F$ is densely spanned by
  powers of $b$ and $b^*$.  Since $b$ is normal, by the
  Stone-Weierstrass Theorem we have
    $F = C(\spec(b))$

  Now we show that $\spec(b^*b) =
  \{0,\ldots,q^{2k},\ldots,q^4,q^2,1\}$.  First recall that
  $$ \spec(a^*a) \cup \{0\} = \spec(aa^*)\cup \{0\}. $$
  Applying the relations in Equation \eqref{eq:suq2rels} and the spectral
  mapping theorem, the above implies
  $$ (1-\spec(b^*b)) \cup \{0\} = (1-q^2 \spec(b^*b)) \cup \{0\}.$$
  This implies
  $$ \spec(b^*b) \cup \{1\} = (q^2 \spec(b^*b)) \cup \{1\}.$$
  The only sets that satisfy the above are $\{0\}$ and the
  hypothesised spectrum of $\spec(b^*b)$.  But we know that
  $\spec(b^*b)$ is not $\{0\}$ because this would imply $b$ was
  zero.

  By the spectral theorem, using the map $z \mapsto
  z\overline{z}$, we know that $\spec(b)$ is a
  subset of the closure of the union of circles
  $\{ z : \|z\|=q^m,\ m\geq 0\}$, and must contain at least one point in each circle.
  The action $\gamma_{1,w}$ sends $b$ to
  $wb$, where $\gamma$ is the action of
  $\TTrs$ on $A$ defined in the proof of Proposition \ref{prop:exthaar}.  However since it is an isomorphism it preserves the
  spectrum of $b$.  Therefore $\spec(b)$ contains the union of
  circles $\{z : \|z\|=q^m,\ m\geq 0\}$.  Since it is closed it also
  contains $0$.  Hence it is exactly this set.  This is also the one
  point compactification of the disjoint union of countably many
  circles, so $F$ is the minimal unitization of  $\oplus_{i=0}^\infty
  C(S^1)$.
\end{proof}
\begin{cor}\label{cor:FKth} The group $K_0(F)$ is given by
  $K_0(F)= \ZZ \oplus \bigoplus_{i=0}^\infty \ZZ$ and is freely
  generated by
  $1$ and $\Scheck_k\Scheck_k^*$ for $k \in \NN_0$.  The group $K_1(F)=
  \bigoplus_{i=0}^\infty \ZZ$ and has generators
  $1-\Scheck_k \Scheck_k^* + \Scheck_k U_1 \Scheck_k^*$ for $k \in
  \NN_0$.
\end{cor}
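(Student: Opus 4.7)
The plan is to read off $K_*(F)$ from the description $F \cong (\bigoplus_{k\geq 0} C(S^1))^+$ established in the preceding lemma, and then translate the abstract generators into concrete ones in $A$ using the series expansions of $b$ and $bb^*$ from the proof of Lemma \ref{lem:trgralg}.

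First I would apply the split short exact sequence
\[ 0 \to \bigoplus_{k\geq 0} C(S^1) \to F \to \CC \to 0 \]
for a unitization to obtain $K_0(F) \cong \ZZ \oplus K_0(\bigoplus_k C(S^1))$, with the extra $\ZZ$ summand generated by the class of the unit $[1_F]$, and $K_1(F) \cong K_1(\bigoplus_k C(S^1))$. Continuity of K-theory under inductive limits gives $K_i(\bigoplus_k C(S^1)) \cong \bigoplus_k K_i(C(S^1))$; combined with the standard facts that $K_0(C(S^1)) = \ZZ$ is generated by the class of the unit and $K_1(C(S^1)) = \ZZ$ is generated by the canonical unitary $z \mapsto z$, this already yields the abstract groups in the statement.

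The remaining step, and the one where most of the content really lies, is to identify these abstract generators concretely in $F$. Using Lemma \ref{lem:graphcalc}, the projections $\Scheck_k \Scheck_k^*$ are mutually orthogonal, and from equations \eqref{eq:bseries} and \eqref{eq:bbsseries} we have $b = \sum_k q^k \Scheck_k U_1 \Scheck_k^*$ and $bb^* = \sum_k q^{2k} \Scheck_k \Scheck_k^*$. Under the isomorphism $F \cong C(\spec(b))$ with $\spec(b) = \{0\} \cup \bigsqcup_{k \geq 0} \{|z|=q^k\}$ from the previous proof, $\Scheck_k \Scheck_k^*$ is the spectral projection of $bb^*$ at the eigenvalue $q^{2k}$, i.e. the characteristic function of the $k$-th circle, which is exactly the unit of the $k$-th summand. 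For $K_1$, the canonical unitary on the $k$-th circle is $z \mapsto z/q^k$, and extending by $1$ on the complement produces a unitary in $F$; using $b\Scheck_k\Scheck_k^* = q^k \Scheck_k U_1 \Scheck_k^*$ from the series for $b$, this unitary is precisely $1 - \Scheck_k\Scheck_k^* + \Scheck_k U_1 \Scheck_k^*$.

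The main obstacle is purely bookkeeping: matching spectral-picture generators to graph-algebra ones and checking that $1 - \Scheck_k\Scheck_k^* + \Scheck_k U_1 \Scheck_k^*$ is genuinely a unitary in $F$ (which uses $\Scheck_k^* \Scheck_k = p_w = U_1^* U_1$ and the orthogonality relations \eqref{eq:graphcalc1}). Everything else is standard K-theory of commutative $C^*$-algebras, so no essential analytic difficulty remains once the isomorphism of the preceding lemma is in hand.
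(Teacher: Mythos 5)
Your proposal is correct and follows essentially the same route as the paper: read off the abstract groups from $F\cong(\bigoplus_k C(S^1))^+$, identify $\Scheck_k\Scheck_k^*$ as the characteristic function of the $k$-th circle via the eigenvalue equation $bb^*\Scheck_k\Scheck_k^* = q^{2k}\Scheck_k\Scheck_k^*$ coming from the series \eqref{eq:bbsseries}, and rewrite the canonical unitary $1-\Scheck_k\Scheck_k^*+q^{-k}b\Scheck_k\Scheck_k^*$ as $1-\Scheck_k\Scheck_k^*+\Scheck_k U_1\Scheck_k^*$ using \eqref{eq:bseries} and \eqref{eq:graphcalc1}. The only point worth making explicit (which the paper does) is that connectedness of each circle forces any nonzero subprojection of the unit of a summand to be the whole unit, and that $\Scheck_k\Scheck_k^*\neq 0$ by universality of the graph algebra.
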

\begin{proof}
The $K$-theory of $F$ is generated by the projections onto each of
the circles (connected components) in $\spec b$, and $1$. Thus we
need to show the spectral projection onto the circle with radius
$q^k$ is $\Scheck_k\Scheck_k^*$.  As each circle is connected, the
spectral projection of $b$ corresponding to each circle has no
non-zero proper sub-projections, and by the spectral theorem
is also the spectral projection onto the point $q^{2k}$  in the
spectrum of $bb^*$.  Therefore it suffices, since
$\Scheck_k\Scheck_k^*$ is nonzero by the universality of the graph
$C^*$-algebra,  to show that
$\Scheck_k\Scheck_k^*$  satisfies
$  bb^* \Scheck_k\Scheck_k^* = q^{2k}\Scheck_k\Scheck_k^*.$
This follows from the formula for $bb^*$, Equation
\eqref{eq:bbsseries}, by the following calculation
\begin{eqnarray*}
  bb^* \Scheck_k\Scheck_k^* &=& \left( \sum_{l=0}^\infty q^{2l}
  \Scheck_l\Scheck_l^* \right) \Scheck_k\Scheck_k^* \\
  &=& \sum_{l=0}^\infty q^{2l} \delta_{l,k} \Scheck_l p_w \Scheck_k^*
  \qquad \text{by Equation \eqref{eq:graphcalc1}} \\
  &=& q^{2k} \Scheck_k \Scheck_k^* \qquad \text{by Equation
  \eqref{eq:graphcalc2}}. \
\end{eqnarray*}

We can use the trace $h|_F$ to map $K_0(F)$ to the real numbers.  By
Lemma \ref{lem:trgralg} we obtain
\begin{equation*}
  h_*(K_0(F)) = \ZZ + \sum_{i=0}^\infty (1-q^2)q^{2i}\ZZ = \ZZ[q^2].
\end{equation*}
Here the first copy of $\ZZ$ is generated by $h(1)=1$, while the other
terms come from $h(\Scheck_k\Scheck_k^*)=(1-q^2)q^{2k}$. From these we
may generate any polynomial in $q^2$. As all the generators of
$K_0(F)$ are clearly independent, we obtain the whole polynomial group
$\ZZ[q^2]$.
The generators of $K_1(F)=\oplus^\infty K_1(C(S^1))$ are given by
$[1-\Scheck_k\Scheck_k^*+q^{-k}b\Scheck_k\Scheck_k^*]$. In order to
write these in terms of the graph algebra generators we first expand
$b$ according to Equation \eqref{eq:bseries}, and then apply Equation
\eqref{eq:graphcalc1};
$$ q^{-k}b\Scheck_k\Scheck_k^* = q^{-k} \left(\sum_{l=0}^\infty q^l
\Scheck_l U_1 \Scheck_l^*\right) \Scheck_k\Scheck_k^* = \Scheck_k
U_1 \Scheck_k^*.$$
\end{proof}

We also require the $K$-theory of the mapping cone algebra $M(F,A)$
for the inclusion of $F$ in $A$. Recall that the mapping cone is the
$C^*$-algebra
$$ M(F,A)=\{f:[0,1]\to A: f\ \mbox{is continuous},\ f(1)=0,\ f(0)\in
F\}.$$
The even $K$-theory group of the mapping cone algebra can be described
as homotopy classes of partial isometries $v\in M_\infty(A)$ with
$vv^*,\ v^*v\in M_\infty(F)$ \cite{Put}.

\begin{lem}\label{lem:mapconeKth}
The group $K_0(M(F,A))$ is generated by the classes of partial
isometries $[\Shat_1]$ and $[\Scheck_k]$ for $k \in
\NN$. An alternative generating set is $[\Shat_k]$, $k\in\NN$ along
with $[\Scheck_1]$.
\end{lem}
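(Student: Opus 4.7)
My plan is to use the six-term exact sequence from $0\to SA\to M(F,A)\to F\to 0$, namely
\begin{equation*}
K_1(F)\xrightarrow{\iota_*} K_1(A)\xrightarrow{\delta} K_0(M(F,A))\xrightarrow{\partial} K_0(F)\xrightarrow{\iota_*} K_0(A),
\end{equation*}
together with the partial-isometry description cited before the lemma, under which $\partial[v]=[vv^*]-[v^*v]$. First observe that every proposed generator $v\in\{\Shat_k,\Scheck_k\}$ is indeed a partial isometry in $A$ with range and source projections in $F$: $\Shat_k^*\Shat_k=p_v$ and $\Scheck_k^*\Scheck_k=p_w$ by \eqref{eq:graphcalc6}, and both $\Shat_k\Shat_k^*$ and $\Scheck_k\Scheck_k^*$ have degree $(0,0)$ by Proposition \ref{prop:deggraph}. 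The strategy is to argue that $\partial$ is an isomorphism onto $\ker(\iota_*\colon K_0(F)\to K_0(A))$, and then check that the images of the candidate generators under $\partial$ span this kernel.

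Using Corollary \ref{cor:FKth} and the Cuntz-Krieger relation $p_v=S_\mu S_\mu^*+S_\nu S_\nu^*$ we get $[p_w]=0$ in $K_0(A)$, and since $\Scheck_k$ implements a Murray-von Neumann equivalence $\Scheck_k\Scheck_k^*\sim p_w$ we also get $[\Scheck_k\Scheck_k^*]=0$ in $K_0(A)$. Hence $\iota_*$ on $K_0$ is surjective with kernel freely generated by $\{[\Scheck_k\Scheck_k^*]\}_{k\ge 0}$. To force $\partial$ to be injective I show $\iota_*\colon K_1(F)\to K_1(A)$ is surjective. For $k=0$, the generator $u_0=1-p_w+U_1=p_v+U_1$ is already the generator of $K_1(A)$; for $k\ge 1$, applying the standard $2\times 2$ matrix trick to the partial isometry $\Scheck_k$ (conjugation by the unitary $W\in M_2(A)$ with $\Scheck_k,\Scheck_k^*$ off-diagonal and $1-\Scheck_k\Scheck_k^*,\ 1-p_w$ on the diagonal) gives $W\,\mathrm{diag}(1,u_0)\,W^*=\mathrm{diag}(u_k,1)$, so $[u_k]=[u_0]$ in $K_1(A)$. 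This makes $\partial$ an isomorphism onto $\bigoplus_{k\ge 0}\ZZ[\Scheck_k\Scheck_k^*]$.

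It now remains to verify that the candidate images generate. From \eqref{eq:graphcalc4} with $k=l=0$ we have $S_\mu S_\mu^*=p_v-\Scheck_1\Scheck_1^*$, giving $\partial[\Shat_1]=-[\Scheck_1\Scheck_1^*]$, and from \eqref{eq:graphcalc6}, $\partial[\Scheck_k]=[\Scheck_k\Scheck_k^*]-[\Scheck_0\Scheck_0^*]$ for $k\ge 1$. Telescoping recovers $[\Scheck_1\Scheck_1^*]=-\partial[\Shat_1]$, then $[\Scheck_0\Scheck_0^*]=-\partial[\Shat_1]-\partial[\Scheck_1]$, and finally $[\Scheck_k\Scheck_k^*]$ for $k\ge 2$, so the first set generates. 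For the alternative set, induction on \eqref{eq:graphcalc4} yields $\Shat_k\Shat_k^*=p_v-\sum_{j=1}^k\Scheck_j\Scheck_j^*$, so $\partial[\Shat_k]=-\sum_{j=1}^k[\Scheck_j\Scheck_j^*]$; the differences $\partial[\Shat_{k-1}]-\partial[\Shat_k]=-[\Scheck_k\Scheck_k^*]$ give $[\Scheck_k\Scheck_k^*]$ for $k\ge 1$, and $[\Scheck_0\Scheck_0^*]$ comes from $\partial[\Scheck_1]$. The one genuinely nontrivial step is the $K_1$-surjectivity, i.e.\ the identification $[u_k]=[u_0]$ in $K_1(A)$; everything else is bookkeeping inside Lemma \ref{lem:graphcalc}.
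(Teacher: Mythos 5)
Your argument follows the paper's proof essentially verbatim: the same mapping-cone six-term exact sequence, the same Murray--von Neumann equivalences identifying $\ker j_*$ on $K_0(F)$ with the span of the $[\Scheck_k\Scheck_k^*]$, and the same use of surjectivity of $K_1(F)\to K_1(A)$ to force the boundary map into $K_0(F)$ to be injective. The only differences are cosmetic --- the paper lifts $[p_w]$ via $[S_\mu+S_\nu]=[\Shat_1]+[\Scheck_1]$ where you telescope \eqref{eq:graphcalc4} directly, and your extra verification that every $u_k$ hits the generator of $K_1(A)$ is more than surjectivity requires --- apart from one harmless sign slip: $\partial[\Shat_{k-1}]-\partial[\Shat_k]=+[\Scheck_k\Scheck_k^*]$, not $-[\Scheck_k\Scheck_k^*]$.
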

\begin{proof}
{}From the exact sequence
$ 0\to C_0(0,1)\otimes A\to M(F,A)\to F\to 0$
we obtain the exact sequence in $K$-theory
\begin{equation*}
 \begin{array}{ccccc} K_1(A) &\to& K_0(M(F,A)) &\stackrel{ ev_*}{\to}&
   K_0(F)\\ \uparrow& & & & \downarrow j_*\\ K_1(F) & \leftarrow &
   K_1(M(F,A)) & \leftarrow & K_0(A)\end{array}
\end{equation*}
where $ev$ is evaluation at $1$ and $j:F \to A$ is the inclusion
map.   By an explicit construction (see \cite{CPR,Put}), a partial isometry
$v \in M_\infty(A)$ satisfying $vv^*,v^*v \in M_\infty(F)$ gives a
projection $p_v$ in the matrices over the unitization of $M(F,A)$, and
$[p_v]-[1]\in K_0(M(F,A))$.  In
particular the evaluation at $1$ of this matrix is given by
  \begin{equation}
  \begin{pmatrix} 1-v^*v & v^* \\ v & 1-vv^* \end{pmatrix}
  \begin{pmatrix} 1 & 0 \\ 0 & 0 \end{pmatrix}
  \begin{pmatrix} 1-v^*v & v^* \\ v & 1-vv^* \end{pmatrix}-
  \begin{pmatrix} 1 & 0 \\ 0 & 0 \end{pmatrix}.
  \end{equation}
One easily checks that this gives the class $[vv^*]-[v^*v] \in
K_0(F)$.  On the other hand the map from $K_1(A)$ to $K_0(M(F,A))$
takes a unitary over $A$ to itself, considered as a partial isometry
with range and source $1$.  To see this, observe that
the isomorphism between homotopy classes
of partial isometries and $K_0(M(F,A))$ given in \cite{Put} takes a
class $[u]\in K_1(A)$ to the class $[p_u]-[1]\in K_0(M(F,A))$. The
projection $p_u$ defining the class $[p_u]-[1]$ is the same as the
projection used to define the map which identifies $K_1(A)$
and $K_0(C_0(0,1)\otimes A)$, \cite{HR}.

Now we need the map from $K_1(F)$ to $K_1(A)$.  We can compute this
map from the other boundary map using Bott periodicity.  In
particular applying the same reasoning to the same mapping cone
exact sequence tensored by $C_0(0,1)$, we find that this map is
given by the composition
$$ \begin{CD} K_1(F) @> \simeq >> K_0(C_0(0,1)\otimes F) @> j_* >>
K_0(C_0(0,1) \otimes A) @> \simeq >> K_1(A) \end{CD}. $$

We only need to know the image of this map, and since  the generator
$[p_v + U_1]$ of $K_1(A)$ is also a generator of $K_1(F)$,
this map is surjective.  Therefore the map from
$K_1(A)$ to $K_0(M(F,A))$ is the zero map.  Therefore the map from
$K_0(M(F,A))$ to $K_0(F)$ is injective, and we have $K_0(M(F,A)) =
ev_*^{-1}(\ker j_*)$.

To calculate $\ker j_*$ on $K_0(F)$, first note the
following Murray-von Neumann
equivalences in $A$;
\begin{eqnarray}
\Scheck_k\Scheck_k^* &\sim& \Scheck_k^*\Scheck_k = p_w, \\
p_v = S_\mu S_\mu^* + S_\nu S_\nu^* &=&
(S_\mu+S_\nu)(S_\mu+S_\nu)^* \nonumber \\ &\sim&
(S_\mu+S_\nu)^*(S_\mu+S_\nu) = S_\mu^*S_\mu+S_\nu^*S_\nu = p_v+p_w. \
\end{eqnarray}
Together these imply $[\Scheck_k\Scheck_k^*]=[0]$ in $K_0(A)$ for
all $k \in \NN_0$.  The other generator $[1]$ of $K_0(F)$ is the
generator of $K_0(A)$. Therefore $\ker j_*$ is generated by
$[\Scheck_k\Scheck_k^*]$ for $k\in\NN_0$.

We may also then say that $\ker j_*$ is generated by the elements
$[\Scheck_0\Scheck_0^*]=[p_w]$ and $[\Scheck_k\Scheck_k^*]-[p_w]$
for $k \in \NN$.  We can invert these elements under the map $ev_*$
as follows
\begin{eqnarray}
[p_w] =
[(S_\mu^*+S_\nu^*)(S_\mu^*+S_\nu^*)^*]-[(S_\mu^*+S_\nu^*)^*(S_\mu^*+S_\nu^*)] &=&
ev_* [S_\mu+S_\nu] \\
{}[\Scheck_k\Scheck_k^*]-[p_w] = [\Scheck_k \Scheck_k^*]- [\Scheck_k^*
\Scheck_k] &=&
ev_*([\Scheck_k]) \
\end{eqnarray}
Therefore $K_0(M(F,A))$ is generated by $[S_\mu+S_\nu]$ and
$[\Scheck_k]$ for $k \in \NN$. Since $S_\mu$ and $S_\nu$ have
orthogonal ranges,  by \cite[Lemma 3.4]{CPR} we have
$$[S_\mu+S_\nu] =  [S_\mu]+[S_\nu]=[\Scheck_1]+[\Shat_1].$$
Thus, in the notation we prefer,
we may say that $K_0(M(F,A))$ is generated by the classes
$[\Shat_1]$ and $[\Scheck_k]$ for $k \geq 1$. To prove the claim about
the other generating set, we use  \cite[Lemmas 3.3, 3.4]{CPR} again to show
that
\begin{eqnarray*}
[\Shat_k]&=&[S_\mu^k]=[S_\mu^kS_\mu S_\mu^*]+[S_\mu^kS_\nu S_\nu^*]\\
&=&[S_\mu^{k+1}]-[S_\mu]+[S_\mu^kS_\nu]-[S_\nu]\\
&=&[\Shat_{k+1}]-[\Shat_1]+[\Scheck_{k+1}]-[\Scheck_1]
\end{eqnarray*}
This is enough to give our other generating set.
\end{proof}


\section{The index pairing for the mapping cone}

We are interested in the odd pairing in $KK$-theory. So let $u$ be a
unitary in $M_k(A)$, and $(Y,2P-1)$, $P$ a projection,  an odd Kasparov module for the
algebras $A,F$, see \cite{K} for more information.
The pairing in KK-theory between
$[u]\in K_1(A)$ and $[(Y,2P-1)]\in KK^1(A,F)$
is given, \cite{PR}, by the map
$$
  H:K_1(A) \times KK^1(A,B) \to K_0(B), $$
 $$ H([u],[(Y,2P-1)]) := [\ker(P_kuP_k)] - [\coker(P_kuP_k)],
$$
where $P_k=P\otimes Id_k$, where $Id_k$ is the identity of $M_k(\CC)$,
and we are computing the index of the map $P_kuP_k:P_kY^k\to P_kY^k$.
However, the generator of $K_1(SU_q(2))$ is (the class of) $p_v+U_1$,
which commutes with $\DD$, and so with the nonnegative spectral
projection of $\DD$. Hence the pairing of our Kasparov module for $SU_q(2)$
with $K$-theory is zero.

The index pairing of the following definition was introduced in
\cite{CPR}. To show that it is well-defined requires extending an odd
Kasparov module for $A,F$ (with $F\subset A$ a subalgebra) to an even
Kasparov module for $M(A,F),F$, where $M(F,A)$ is the mapping cone
algebra for the inclusion of $F$ into $A$.

\label{sec:index}
\begin{defn}[{\cite{CPR}}]
For $[v] \in K_0(M(F,A))$ and $(Y,2P-1)$ an odd $(A,F)$-Kasparov
module with $P$ commuting with $F\subset A$ acting on the left,
define
\begin{eqnarray}\label{eq:Mapcone}
    \langle [v], (Y,V) \rangle &:=& {\rm Index}(PvP : v^*vPY \to vv^*PY)\\
& =&
[\ker(PvP)] - [\coker(PvP)] \in K_0(F).\nonumber
\end{eqnarray}
\end{defn}

\begin{prop}\label{prop:MapPairCalc}
Let $(X,\DD)$ be the Haar module of $A=C(SU_q(2))$, and
$P=\chi_{[0,\infty]}(\DD)$. The pairing of Equation
\eqref{eq:Mapcone} for $(X,\DD)$ is determined by the following
pairings on generators of $K_0(M(F,A))$:
\begin{equation*}
  \langle [\Shat_k], [(X,\DD)] \rangle =
  -\sum_{l=0}^{k-1}[\Shat_k\Shat_k^*\Phi_l], \qquad
  \langle [\Scheck_k], [(X,\DD)] \rangle =
  -\sum_{l=0}^{k-1}[\Scheck_k\Scheck_k^*\Phi_l].
\end{equation*}
The pairing for the adjoints is of course given by the negatives of
these classes.
\end{prop}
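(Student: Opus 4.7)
The plan is to compute $PvP$ directly using the eigenspace decomposition of $\DD$, exploiting that $v\in\{\Shat_k,\Scheck_k\}$ shifts the $\sigma$-grading by exactly $k$. Since $P=\chi_{[0,\infty]}(\DD)=\sum_{m\geq 0}\Phi_m$ strongly, and Lemma \ref{lem:stuff} gives $\sigma_z(v)=z^k v$ for both choices of $v$, we have $v\,\Phi_m X\subset\Phi_{m+k}X$ for every $m\in\ZZ$. The initial projections $v^*v$ equal $p_v$ or $p_w$ by \eqref{eq:graphcalc6} and \eqref{eq:graphcalc1}, and both lie in $F=A^\sigma$, hence commute with $P$ and with each $\Phi_m$.

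Next I would restrict $PvP$ to $v^*v\,PX=\bigoplus_{m\geq 0}v^*v\,\Phi_m X$. For $m\geq 0$ the image $v(v^*v\,\Phi_m X)=vv^*\,\Phi_{m+k}X$ already lies in $PX$ (since $m+k\geq 0$), so $P$ acts as the identity on it and $PvP$ agrees with plain left-multiplication by $v$. Because $v$ is a partial isometry with initial projection $v^*v$ and final projection $vv^*$, it restricts to an $F$-module isomorphism $v^*v X\cong vv^* X$; in particular $\ker(PvP)=0$. The image is then $vv^*\bigoplus_{m\geq k}\Phi_m X$, sitting inside $vv^*\,PX=vv^*\bigoplus_{m\geq 0}\Phi_m X$, so
\begin{equation*}
\coker(PvP)=vv^*\bigoplus_{l=0}^{k-1}\Phi_l X,
\end{equation*}
and substituting $v=\Shat_k$ and $v=\Scheck_k$ together with the overall sign in $[\ker]-[\coker]$ yields the two stated formulae.

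The main technical point is the last identification, turning the cokernel module into the sum of $K_0(F)$-classes $[vv^*\Phi_l]$. Here Lemma \ref{lem:Phicpct} is essential: it writes each $\Phi_l$ as a finite sum of rank-one endomorphisms $\Theta_{y,y}$, so $vv^*\Phi_l$ is a genuine finite-rank projection in $\End_F^0(X)$, and the standard Morita identification of $K_0(\End_F^0(X))$ with $K_0(F)$ sends its class to exactly what is labelled $[vv^*\Phi_l]$. The adjoint classes follow by running the same argument with the roles of $v^*v$ and $vv^*$ swapped: $Pv^*P$ then has trivial cokernel, while its kernel on the domain $vv^*\,PX$ is the finite piece $vv^*\bigoplus_{l=0}^{k-1}\Phi_l X$ which is killed by $P$ after the shift by $-k$, producing precisely the negated classes claimed.
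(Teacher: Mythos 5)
Your proof is correct and follows essentially the same route as the paper's: both use the intertwining relation $v\Phi_m=\Phi_{m+k}v$ (the paper phrases it as $P\Shat_k=\Shat_k\chi_{[-k,\infty)}(\DD)$, you as $v\,\Phi_mX\subset\Phi_{m+k}X$), the fact that $v^*v,vv^*\in F$ commute with the spectral projections of $\DD$, and the restriction of the partial isometry to identify $\ker=0$ and $\coker=vv^*\bigoplus_{l=0}^{k-1}\Phi_lX$. Your explicit appeal to Lemma \ref{lem:Phicpct} to see $vv^*\Phi_l$ as a finite-rank projection representing a class in $K_0(F)$ is a detail the paper leaves implicit, but it is the right justification.
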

\begin{proof}
  We first calculate the kernel and cokernel of the map $PvP : v^*vPX
  \to vv^*PX$ when $v$ is given by one of
  $\Scheck_k,\Scheck_k^*,\Shat_k,\Shat_k^*$.
By Lemma \ref{lem:PhiGrad},
$\Shat_k\Phi_m = \Phi_{m+k}\Shat_k$.  This implies
\begin{equation}\label{eq:ShatP}
  P\Shat_k = \Shat_k\chi_{[-k,\infty)}(\DD), \qquad \Shat_kP =
  \chi_{[k,\infty)}(\DD)\Shat_k.
\end{equation}
where $\chi_{[-k,\infty)}$ is the characteristic function of
the interval $[-k,\infty)$.  Therefore we can rewrite the operator in
question as
\begin{equation*}
  \Shat_k \chi_{[-k,\infty)}(\DD)P : \Shat_k^*\Shat_k PX \to
  \Shat_k\Shat_k^* PX.
\end{equation*}
This is the same as the operator
\begin{equation}\label{eq:ShatAsFred}
\Shat_k P : \Shat_k^*\Shat_k PX \to
  \Shat_k\Shat_k^* PX.
\end{equation}
Now the range and source projections of $\Shat_k$ lie in $F$ and hence
commute with all the spectral projections of $\DD$.  Therefore we can
use Equation \eqref{eq:ShatP} to
restrict the isomorphism $\Shat_k : \Shat_k^*\Shat_k X \to
\Shat_k\Shat_k^* X$ to obtain an isomorphism $$\Shat_k : \Shat_k^*\Shat_k
PX \to \Shat_k\Shat_k^* \chi_{[k,\infty)}(\DD)X.$$  From this it is
evident that the kernel of the operator in Equation \eqref{eq:ShatAsFred} is zero and the cokernel is
$\Shat_k\Shat_k^*\chi_{[0,k-1]}(\DD)X$.
Similarly for the partial isometry $\Shat_k^*$ we may write the
operator $P\Shat_k^*P$ as
\begin{equation}\label{eq:ShatStarAsFred}
\Shat_k^* \chi_{[k,\infty)}(\DD) : \Shat_k \Shat_k^* PX \to
\Shat_k^*\Shat_k PX.
\end{equation}
By the same methods as before we obtain the isomorphism
$$\Shat_k^* : \Shat_k\Shat_k^*
\chi_{[k,\infty)}(\DD)X \to \Shat_k^*\Shat_k PX.$$
{}From this we can see that the cokernel of the operator in Equation \eqref{eq:ShatStarAsFred} is
empty while the kernel is
$\Shat_k\Shat_k^*\chi_{[0,k-1]}(\DD)X$.
Exactly the same reasoning gives the result for the cases $\Scheck_k$
and $\Scheck_k^*$.
\end{proof}

We will relate this $K_0(F)$-valued index to two different
numerical indices in the next two Sections.

\section{The semifinite spectral triple for $SU_q(2)$}
\label{sec:SST}
We now wish to consider the Hilbert space $\HHt$ again, this time to
construct a von Neumann algebra
which has a semifinite trace induced by $h$.  From this we can
construct a semifinite spectral triple, which we use to compute the
index pairing using the spectral flow formula of \cite{CP2}. 
\subsection{Semifinite spectral triples}
We use the viewpoint of \cite{CPRS2} on semifinite spectral triples.
Given a von Neumann algebra $\Nvn$ with a faithful, normal, semifinite
trace $\tau$, there is a norm closed ideal $\KN$ generated by the
projections $E \in \Nvn$ with $\tau(E) < \infty$.
\begin{defn}
A semifinite spectral triple $(\AAA,\HH,\DD)$ is given by a Hilbert
space $\HH$, a $*$-algebra $\AAA \subset \Nvn$ where $\Nvn$ is a semifinite
von Neumann algebra acting on $\HH$, and a densely defined unbounded
self-adjoint operator $\DD$ affiliated to $\Nvn$ such that
$[D,a]$ is densely defined and extends to a bounded operator
  for all $a \in \AAA$
and $a(\lambda-\DD)^{-1} \in \KN$ for all $\lambda \notin \RR$ and
  all $a \in \AAA$.
The triple is said to be even if there is some $\Gamma \in \Nvn$
  such that $\Gamma^*=\Gamma, \Gamma^2=1,a\Gamma=\Gamma a$ for all $a \in
\AAA$
  and $\DD \Gamma + \Gamma \DD = 0$.  Otherwise it is odd.
\end{defn}
We note that if $T\in\cn$ and
$[\DD,T]$ is bounded, then $[\DD,T]\in\cn$.
\begin{defn}A $*$-algebra $\AAA$ is smooth if it is Fr\'{e}chet
and $*$-isomorphic to a proper dense subalgebra $i(\AAA)$ of a
$C^*$-algebra $A$ which is stable under the holomorphic functional
calculus.\end{defn} 
Asking for
$i(\AAA)$ to be a {\it proper} dense subalgebra of $A$ immediately
implies that the Fr\'{e}chet topology of $\AAA$ is finer than the
$C^*$-topology of $A$ (since Fr\'{e}chet means locally convex,
metrizable and complete.) 
We will write $\overline{\AAA}=A$, 
as $\AAA$ will be represented on
a Hilbert space and the 
notation $\overline{\AAA}$ is unambiguous.

It has been shown that if $\AAA$ is smooth in $A$ then $M_n(\AAA)$ is
smooth in $M_n(A)$, \cite{GVF,LBS}. This ensures that the
$K$-theories of the two algebras are isomorphic, the isomorphism
being induced by the inclusion map $i$. This definition ensures
that a smooth algebra is a `good' algebra, \cite{GVF}, so these
algebras have a sensible spectral theory which agrees with that
defined using the $C^*$-closure, and the group of invertibles is
open.

The following Lemma, proved in \cite{R1}, explains one method of constructing smooth
spectral triples.

\begin{lem}\label{smo} If the algebra $\AAA$ in $(\AAA,\HH,\DD)$ is 
in the domain $\delta^n$ for $n=1,2,3,\ldots$ where $\delta$ is the partial derivation 
$\delta=ad(|\DD|)$ 
then
the completion of $\AAA$ in the locally convex
topology determined by the seminorms \ben
q_{n,i}(a)=\|\delta^nd^i(a)\|,\ \ n\geq 0,\ i=0,1,\een where
$d(a)=[\DD,a]$ is a smooth algebra.
\end{lem}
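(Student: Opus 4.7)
The plan is to verify that the completion $\AAA_\delta$ of $\AAA$ in the topology generated by the seminorms $\{q_{n,i}\}_{n\geq 0,\, i=0,1}$ satisfies the three requirements in the definition of a smooth algebra: it is a Fréchet algebra; it embeds as a proper dense subalgebra of the $C^*$-algebra $A=\overline{\AAA}$ (operator norm closure); and it is stable under the holomorphic functional calculus.

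First I would verify that $(\AAA,\{q_{n,i}\})$ is a topological $*$-algebra, so that the completion is a Fréchet $*$-algebra. Since $\delta=ad(|\DD|)$ and $d=ad(\DD)$ are both $*$-derivations of the ambient von Neumann algebra $\Nvn$ (in the sense that $\delta(a^*)=-\delta(a)^*$, so $\|\delta(a^*)\|=\|\delta(a)\|$, and similarly for $d$), the seminorms satisfy $q_{n,i}(a^*)=q_{n,i}(a)$. The hypothesis that $\AAA\subset\bigcap_n\mathrm{dom}(\delta^n)$, together with $[\DD,a]$ bounded and $\delta d=d\delta$ on the common domain, ensures the $q_{n,i}$ are finite on $\AAA$. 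A repeated application of the Leibniz rule gives
\begin{equation*}
\delta^n d^i(ab) \;=\; \sum_{k=0}^{n}\sum_{j=0}^{i}\binom{n}{k}\binom{i}{j}\,\delta^{n-k}d^{i-j}(a)\,\delta^{k}d^{j}(b),
\end{equation*}
whence $q_{n,i}(ab)\le\sum_{k,j}\binom{n}{k}\binom{i}{j}q_{n-k,i-j}(a)q_{k,j}(b)$. This submultiplicative estimate shows multiplication is jointly continuous and extends to the completion; metrizability and completeness are automatic, so $\AAA_\delta$ is Fréchet.

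Next, since $q_{0,0}(a)=\|a\|$ is one of the defining seminorms, the topology on $\AAA_\delta$ is finer than the $C^*$-norm topology. Thus every Cauchy net in $\AAA_\delta$ is also Cauchy in $A=\overline{\AAA}$, giving a continuous, injective $*$-homomorphism $\iota:\AAA_\delta\hookrightarrow A$ with dense image. By continuity of the seminorms, the operators $\delta^n d^i$ extend from $\AAA$ to all of $\AAA_\delta$; in particular every element of $\AAA_\delta$ lies in $\bigcap_n\mathrm{dom}(\delta^n)\cap\mathrm{dom}(d)$. Since generic elements of $A$ need not lie in $\mathrm{dom}(\delta)$, the inclusion is proper.

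Finally, I would establish stability under the holomorphic functional calculus by proving spectral invariance: if $a\in\AAA_\delta$ is invertible in $A$, then $a^{-1}\in\AAA_\delta$. The starting identity comes from differentiating $aa^{-1}=1$ to obtain
\begin{equation*}
\delta(a^{-1}) \;=\; -a^{-1}\delta(a)a^{-1},\qquad d(a^{-1}) \;=\; -a^{-1}d(a)a^{-1},
\end{equation*}
which bounds $q_{1,0}(a^{-1})$ and $q_{0,1}(a^{-1})$ in terms of $\|a^{-1}\|$ and the corresponding seminorms of $a$. An induction on $n+i$ using the Leibniz rule expresses $\delta^n d^i(a^{-1})$ as a finite sum of products of $a^{-1}$ with terms of the form $\delta^p d^q(a)$ for $p+q\le n+i$, so every $q_{n,i}(a^{-1})$ is finite. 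Hence $a^{-1}\in\AAA_\delta$. This spectral invariance, by the standard result (see \cite{GVF}), implies closure under the holomorphic functional calculus, completing the verification that $\AAA_\delta$ is smooth.

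The main obstacle is the spectral invariance step: one must control the growth of $\delta^n d^i(a^{-1})$ combinatorially via the repeated application of the Leibniz rule, making sure that at each stage of the induction no uncontrolled boundary terms appear and that the pieces $\delta^p d^q(a)$ landing in the expression are all already bounded by the hypotheses on $a$.
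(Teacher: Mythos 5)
First, a point of comparison: the paper does not prove this lemma at all --- it is quoted from \cite{R1} --- so your proposal is being measured against the standard argument from that reference rather than against anything in the text. Your overall architecture (Fr\'echet structure via Leibniz estimates, continuous dense embedding because $q_{0,0}(a)=\|a\|$ is among the seminorms, then spectral invariance) is the right one, and the first two paragraphs are essentially sound, except that identifying the abstract completion with a subalgebra of $A$ lying in $\bigcap_n{\rm dom}(\delta^n d^i)$ already uses that $\delta$ and $d$ are \emph{closed} derivations (if $a_k\to a$ in norm and $\delta(a_k)$ converges in norm, then $a\in{\rm dom}\,\delta$ and $\delta(a)=\lim\delta(a_k)$). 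You never invoke closedness, and it is also the missing ingredient in your third step.

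The genuine gap is in the spectral invariance argument, and it is twofold. First, ``differentiating $aa^{-1}=1$'' to obtain $\delta(a^{-1})=-a^{-1}\delta(a)a^{-1}$ presupposes that $a^{-1}$ already lies in ${\rm dom}\,\delta$; that is precisely what must be proved, and it requires the standard but nontrivial fact that the domain of a closed derivation on a $C^*$-algebra is inverse-closed. Second, and more seriously, even granting all the formal manipulations, your induction only shows that every $q_{n,i}(a^{-1})$ is finite, i.e.\ that $a^{-1}$ lies in the \emph{maximal} Fr\'echet algebra $\bigcap_{n,i}{\rm dom}(\delta^n d^i)$. The lemma concerns the \emph{completion of $\AAA$}, which is the closure of $\AAA$ inside that maximal algebra and may a priori be a proper closed subalgebra of it; finiteness of the seminorms of $a^{-1}$ does not place $a^{-1}$ in that closure. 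You correctly sensed that the difficulty lives in this step, but diagnosed it as combinatorial growth control rather than as these two issues. A repair that handles both at once: write $a^{-1}=(a^*a)^{-1}a^*$ and expand $(a^*a)^{-1}=M^{-1}\sum_{j\ge0}(1-a^*a/M)^j$ with $M=\|a^*a\|$, so that $x:=1-a^*a/M$ is positive with $\|x\|<1$; the partial sums are polynomials in $a^*a$ and hence lie in the completion, and your own Leibniz estimate gives $q_{n,i}(x^j)\le P_{n,i}(j)\,\|x\|^{j-n-i}\bigl(\max_{p\le n,\,l\le i}q_{p,l}(x)\bigr)^{n+i}$, polynomial times geometric, so the series converges in every seminorm and its sum lies in the (complete) closure of $\AAA$. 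Finally, your justification of properness (``generic elements of $A$ need not lie in ${\rm dom}\,\delta$'') is an assertion rather than an argument, though that defect is arguably inherited from the statement of the lemma itself.
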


We call the topology on $\AAA$ determined by the seminorms $q_{ni}$
of Lemma \ref{smo} the $\delta$-topology.

\subsection{Summability}
In the following, let $\mathcal N$ be a semifinite von Neumann
algebra with faithful normal trace $\tau$. Recall from \cite{FK}
that if $S\in\mathcal N$, the \emph{$t^{\rm th}$ generalized singular
value} of $S$ for each real $t>0$ is given by
$$\mu_t(S)=\inf\{\|SE\|\ : \ E \mbox{ is a projection in }
{\mathcal N} \mbox { with } \tau(1-E)\leq t\}.$$

The ideal $\LL^1({\mathcal N})$ consists of those operators $T\in
{\mathcal N}$ such that $\|T\|_1:=\tau( |T|)<\infty$ where
$|T|=\sqrt{T^*T}$. In the Type I setting this is the usual trace
class ideal. We will simply write $\LL^1$ for this ideal in order
to simplify the notation, and denote the norm on $\LL^1$ by
$\|\cdot\|_1$. An alternative definition in terms of singular
values is that $T\in\LL^1$ if $\|T\|_1:=\int_0^\infty \mu_t(T) dt
<\infty.$

Note that in the case where ${\mathcal N}\neq{\mathcal
B}({\mathcal H})$, $\LL^1$ is not complete in this norm but it is
complete in the norm $\|\cdot\|_1 + \|\cdot\|_\infty$. (where
$\|\cdot\|_\infty$ is the uniform norm). Another important ideal for
us is the domain of the Dixmier trace:
$${\mathcal L}^{(1,\infty)}({\mathcal N})=
\left\{T\in{\mathcal N}\ : \Vert T\Vert_{_{{\mathcal
L}^{(1,\infty)}}} :=   \sup_{t> 0}
\frac{1}{\log(1+t)}\int_0^t\mu_s(T)ds<\infty\right\}.$$


We will suppress the $({\mathcal N})$ in our notation for these
ideals, as $\cn$ will always be clear from context. The reader
should note that ${\mathcal L}^{(1,\infty)}$ is often taken to
mean an ideal in the algebra $\widetilde{\mathcal N}$ of
$\tau$-measurable operators affiliated to ${\mathcal N}$. Our
notation is however consistent with that of \cite{C} in the
special case ${\mathcal N}={\mathcal B}({\mathcal H})$. With this
convention the ideal of $\tau$-compact operators, ${\mathcal
  K}({\mathcal N})$,
consists of those $T\in{\mathcal N}$ (as opposed to
$\widetilde{\mathcal N}$) such that $\mu_\infty(T):=\lim
_{t\to \infty}\mu_t(T)  = 0.$

\begin{defn}\label{summable} A semifinite spectral triple
  $(\AAA,\HH,\DD)$, with $\AAA$
a  unital algebra, is
$(1,\infty)$-summable if 
$(\DD-\lambda)^{-1}\in\LL^{(1,\infty)}$ for $\lambda\in\CC\setminus\RR.$
\end{defn}

We need to briefly discuss the Dixmier trace (for
more information on semifinite Dixmier traces, see \cite{CPS2}).
For $T\in\LL^{(1,\infty)}$, $T\geq 0$, the function
\ben
F_T : t \mapsto \frac{1}{\log(1+t)}\int_0^t\mu_s(T)ds
\een
is bounded. For
certain functionals $\omega\in L^\infty(\RR_*^+)^*$ (called Dixmier
functionals in \cite{CPS2}), we
obtain a positive functional on $\LL^{(1,\infty)}$ by setting
$ \tau_\omega(T)=\omega(F_T).$
This is the
Dixmier trace associated to the semifinite normal trace $\tau$,
denoted $\tau_\omega$, and we extend it to all of
$\LL^{(1,\infty)}$ by linearity, where of course it is a trace.
The Dixmier trace $\tau_\omega$
vanishes on the ideal of trace class
operators. Whenever the function $F_T$ has a limit $\alpha$ at infinity
then for all Dixmier functionals $\omega(F_T)=\alpha$.


The following result 
(see \cite{C} for the original statement) relates measurability and residues
in the semifinite case. We state the
result
 for the
$(1,\infty)$-summable case.

\begin{prop}[{\cite[Theorem 3.8]{CPS2}}]\label{prop:ZetaToDix}
Let $A \in \Nvn,\ T \geq 0,\ T \in \LL^{(1,\infty)}(\Nvn)$ and suppose that
$\lim_{s\to1^+} (s -
1)\tau (AT^s)$  exists, then it is
equal to $\tau_\omega(AT)$ for any Dixmier functional $\omega$.
\end{prop}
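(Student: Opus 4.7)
The approach is a semifinite Hardy-Littlewood-Karamata Tauberian argument. The strategy is to show that existence of the zeta-residue $L=\lim_{s\to 1^+}(s-1)\tau(AT^s)$ forces the Ces\`aro average $F_{AT}(t):=\frac{1}{\log(1+t)}\int_0^t\mu_s(AT)\,ds$ to converge to $L$ at infinity; since a Dixmier functional $\omega$ by definition returns the limit of $F_X$ at infinity whenever that limit exists, this immediately yields $\tau_\omega(AT)=\omega(F_{AT})=L$ for every such $\omega$.

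I would proceed in three moves. First, reduce to $A\geq 0$ by decomposing $A=(A_1-A_2)+i(A_3-A_4)$ with each $A_j\in\Nvn$ positive and applying linearity in the first argument to both sides of the claimed equality; one checks that each individual residue $\lim_{s\to 1^+}(s-1)\tau(A_jT^s)$ exists via the domination $0\leq\tau(A_jT^s)\leq\|A\|\tau(T^s)$ together with the pure case $A=1$. Second, for $A\geq 0$ introduce the positive operator $B:=T^{1/2}AT^{1/2}$; the operator inequality $B\leq\|A\|T$ yields $\mu_t(B)\leq\|A\|\mu_t(T)$ and so $B\in\LL^{(1,\infty)}(\Nvn)$. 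Cyclicity of $\tau$ gives $\tau(AT^s)=\tau(T^{s/2}AT^{s/2})=\tau(BT^{s-1})$ for $s>1$, while the trace property of $\tau_\omega$ on $\LL^{(1,\infty)}$ gives $\tau_\omega(AT)=\tau_\omega(B)$. Third, apply the classical Karamata Tauberian theorem to the positive Stieltjes measure $d\sigma_B(\lambda)=-d\tau(\chi_{[\lambda,\infty)}(B))$: after the logarithmic change of variable $\lambda=e^{-u}$ the Mellin integral $\tau(B^s)=\int_0^\infty\lambda^s\,d\sigma_B(\lambda)$ becomes a Laplace transform, and Karamata's theorem converts the residue at $s=1$ into the required asymptotic of $\int_0^t\mu_s(B)\,ds=\log(1+t)F_B(t)$.

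The main obstacle is the non-commutativity of $A$ (equivalently $B$) with $T$ in the integrand $BT^{s-1}$: Karamata's theorem applies cleanly to $\tau(B^s)$, not to $\tau(BT^{s-1})$, so one must establish the asymptotic identity $(s-1)\bigl[\tau(BT^{s-1})-\tau(B^s)\bigr]\to 0$ as $s\to 1^+$. This is a non-commutative commutator estimate, proved via the Fack-Kosaki inequality $\mu_t(XY)\leq\mu_{t/2}(X)\mu_{t/2}(Y)$ together with uniform norm control in the weak $L^p$ ideals as $s\to 1^+$. A parallel estimate is needed to ensure that $F_{AT}(t)$ and $F_B(t)$ share the same limit at infinity, which again comes from the Fack-Kosaki singular-value calculus. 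These two technical inputs constitute the heart of \cite[Theorem 3.8]{CPS2}; once they are in place, the scalar Tauberian machinery closes the argument.
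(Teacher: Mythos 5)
The paper itself offers no proof of this proposition: it is quoted verbatim from \cite[Theorem 3.8]{CPS2}, so your attempt can only be measured against that reference. Your overall strategy --- Karamata's Tauberian theorem applied to the distribution function of a positive operator, together with the passage from $\tau(AT^s)$ to $B=T^{1/2}AT^{1/2}$ --- is the right circle of ideas for the case $A\geq 0$.

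However, your first move contains a genuine gap that cannot be repaired as stated. Writing $A=(A_1-A_2)+i(A_3-A_4)$ with $A_j\geq 0$, you claim that each limit $\lim_{s\to 1^+}(s-1)\tau(A_jT^s)$ exists because of the domination $0\leq \tau(A_jT^s)\leq \|A\|\tau(T^s)$. Domination yields only boundedness of $(s-1)\tau(A_jT^s)$, not convergence; the hypothesis of the proposition is the existence of the limit for the \emph{given} $A$ only, and $T$ itself need not be measurable, so even $\lim_{s\to 1^+}(s-1)\tau(T^s)$ may fail to exist. (The reduction to self-adjoint $A$ is harmless, since $\tau(\mathrm{Re}(A)\,T^s)=\mathrm{Re}\,\tau(AT^s)$, but the further splitting into positive parts is not.) The reference circumvents this by proving a stronger statement first: for \emph{every} $A\in\Nvn$ one has $\tau_\omega(AT)=\xi_\omega\bigl(s\mapsto(s-1)\tau(AT^s)\bigr)$, where $\xi_\omega$ is a fixed generalized limit built from the Dixmier functional $\omega$. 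Both sides of that identity are linear in $A$, so it suffices to establish it for $A\geq 0$ --- which is where your Tauberian machinery belongs --- and the proposition as stated then follows at once, because a generalized limit of a convergent function is its ordinary limit. Your plan inverts this logic and tries to push linearity through ordinary limits, which fails. A secondary concern: the asserted estimate $(s-1)\bigl[\tau(BT^{s-1})-\tau(B^s)\bigr]\to 0$ is itself a delicate point and not an obvious consequence of the Fack--Kosaki inequality you invoke; you defer it to the reference, but together with the reduction step it is most of what actually has to be proved.
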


\subsection{The spectral flow formula}

Once we have constructed our semifinite spectral triple for $SU_q(2)$,
we will want to examine the pairing with $K$-theory, just as for the
$K_0(F)$-valued pairing in $KK$-theory. It will turn out
that our construction has zero pairing with $K_1(SU_q(2))$ (for the
same reasons as in the $KK$-construction), but has nonzero pairing with the
mapping cone algebra of the inclusion $F\hookrightarrow A$. As this
involves pairing with partial isometries (at least in the odd
formulation of the problem; see \cite{CPR}), the spectral flow
formula is a priori more complicated and given by  \cite[Corollary 8.11]{CP2}.

\begin{prop} Let $(\AAA,\HH,\DD_0)$ be an odd unbounded 
$\theta$-summable semifinite
spectral triple relative to $(\cM,\phi)$. For any $\epsilon>0$ we
define a one-form $\alpha^\epsilon$ on the affine space $\cM_0=\DD_0+\cM_{sa}$ by
$$\alpha^\epsilon(A)=\sqrt{\frac{\epsilon}{\pi}}\phi(Ae^{-\epsilon\DD^2})$$
for $\DD\in\cM_0$ and $A\in T_\DD(\cM_0)=\cM_{sa}$. Then the
integral of $\alpha^\epsilon$ is independent of the piecewise $C^1$
path in $\cM_0$ and if $\{\DD_t =\DD_a+A_{t}\}_{t\in[a,b]}$ is any piecewise
$C^1$ path in $\cM_0$ joining $\DD_a$ and $\DD_b$ then 
$$
sf(\DD_a,\DD_b)=\sqrt{\frac{\epsilon}{\pi}}\int_a^b\phi(\DD_t'e^{-\epsilon\DD_t^2})dt
+\frac{1}{2}\eta_\epsilon(\DD_b)
-\frac{1}{2}\eta_\epsilon(\DD_a)+\frac{1}{2}\phi\left([\ker(\DD_b)]-[\ker(\DD_a)]\right).$$
Here the truncated eta is given by
$\eta_\epsilon(\DD)=\frac{1}{\sqrt{\pi}}\int_\epsilon^\infty\phi(\DD e^{-t\DD^2})t^{-1/2}dt,$
and the integral converges for any $\epsilon>0$.
\end{prop}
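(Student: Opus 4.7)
My plan is to follow the approach of Carey--Phillips in \cite{CP2}, realising spectral flow as the integral of a closed one-form on the affine space $\cM_0$, plus boundary corrections coming from truncated eta invariants and endpoint kernels. The proof breaks naturally into three steps.

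First, I would verify that $\alpha^\epsilon$ is a closed (hence exact, since $\cM_0$ is affine and therefore contractible) one-form on $\cM_0$. The core computation is to differentiate $\alpha^\epsilon(A)$ in a second direction $B\in\cM_{sa}$ using Duhamel's formula
\begin{equation*}
\left.\frac{d}{ds}\right|_{s=0}e^{-\epsilon(\DD+sB)^2}=-\epsilon\int_0^1 e^{-u\epsilon \DD^2}(\DD B+B\DD)\, e^{-(1-u)\epsilon \DD^2}\,du ,
\end{equation*}
substitute this into $\phi(A\,e^{-\epsilon\DD^2})$, and use cyclicity of the semifinite trace $\phi$ together with the fact that $\epsilon$-summability places the relevant operators in $\LL^1(\cM,\phi)$. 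Antisymmetrising in $A,B$ yields $d\alpha^\epsilon=0$, so the line integral depends only on endpoints.

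Second, I would evaluate $\int_\gamma \alpha^\epsilon$ on the straight-line path $\DD_t=\DD_a+t(\DD_b-\DD_a)$ by relating the integrand to a total $t$-derivative of the truncated eta. Using Duhamel once more, one obtains
\begin{equation*}
\frac{d}{dt}\eta_\epsilon(\DD_t)=\frac{1}{\sqrt{\pi}}\int_\epsilon^\infty\phi\bigl(\DD_t'e^{-u\DD_t^2}\bigr)u^{-1/2}\,du+\text{correction},
\end{equation*}
where the correction collects cyclic commutator terms in $\phi$ that vanish by the tracial property. An integration by parts in the parameter $u$, together with $\theta$-summability to justify boundary behaviour at $u=\infty$, produces a term $2\sqrt{\epsilon/\pi}\,\phi(\DD_t'e^{-\epsilon\DD_t^2})$ from the lower endpoint, identifying the derivative of $\tfrac12\eta_\epsilon(\DD_t)$ with $\alpha^\epsilon(\DD_t')$ up to controlled corrections.

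Third, I would identify the boundary expression with spectral flow itself. Using the analytic definition of $sf$ from \cite{CP2} via the phase of $\chi_{[0,\infty)}(\DD_t)$ along the path, one shows that the difference between $\int_\gamma \alpha^\epsilon$ and $sf(\DD_a,\DD_b)$ is precisely the eta difference $\tfrac12(\eta_\epsilon(\DD_b)-\eta_\epsilon(\DD_a))$ plus the kernel correction $\tfrac12\phi([\ker\DD_b]-[\ker\DD_a])$; the half-integer weight on the endpoint kernels is the standard convention arising from how eigenvalues that sit at zero at the endpoints are counted. The main obstacle will be controlling convergence: one must ensure that the integrals defining $\eta_\epsilon$ converge at both endpoints uniformly in $t$, that Duhamel's formula can be applied inside $\phi$, and that the operators $\DD_t'e^{-u\DD_t^2}$ are trace-class for $u\geq\epsilon$ uniformly along the path. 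Each of these follows from $\theta$-summability combined with standard semigroup estimates, but the bookkeeping is delicate.
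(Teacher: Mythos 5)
This proposition is not proved in the paper at all: it is quoted verbatim from \cite[Corollary 8.11]{CP2}, so there is no internal argument to compare yours against. Your outline does reproduce the general Carey--Phillips strategy (exactness of the one-form $\alpha^\epsilon$ on the contractible affine space $\cM_0$, Duhamel's formula, and an integration by parts in the heat parameter linking $\alpha^\epsilon$ to the $t$-derivative of the truncated eta), which is indeed how the cited result is obtained.

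That said, two points in your sketch are genuinely problematic. First, in Step 2 the ``correction'' in $\frac{d}{dt}\eta_\epsilon(\DD_t)$ does \emph{not} vanish by the tracial property: after applying Duhamel and cyclicity it equals $-\frac{2}{\sqrt{\pi}}\int_\epsilon^\infty u^{1/2}\phi\bigl(\DD_t'\DD_t^2 e^{-u\DD_t^2}\bigr)\,du$, and it is precisely this term which, upon integration by parts in $u$, produces both the lower-endpoint term $-2\sqrt{\epsilon/\pi}\,\phi(\DD_t'e^{-\epsilon\DD_t^2})$ and a boundary term $\lim_{u\to\infty}2u^{1/2}\phi(\DD_t'e^{-u\DD_t^2})/\sqrt{\pi}$. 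That boundary term at $u=\infty$ does not vanish when $\DD_t$ has nontrivial kernel, and its analysis is exactly where the integer-valued jumps of spectral flow and the endpoint kernel corrections $\frac12\phi([\ker\DD_b]-[\ker\DD_a])$ enter; your sketch suppresses it. Second, Step 3 asserts the identification of $\int_\gamma\alpha^\epsilon$ with $sf$ modulo the eta and kernel terms, but this is the content of the theorem rather than an argument for it: one must actually prove the formula on some computable class of paths (e.g.\ paths with invertible endpoints, where the $u=\infty$ boundary term vanishes) and then use homotopy invariance of both sides, together with a separate treatment of the zero eigenvalues at the endpoints, to obtain the general case. As it stands, the proposal is a plausible roadmap but not a proof; for the purposes of this paper the correct move is simply to cite \cite[Corollary 8.11]{CP2}, as the authors do.
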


We want to employ this formula in a finitely summable setting, so we
need to Laplace transform the various terms appearing in the
formula. We introduce the notation
$ C_r:=\frac{\sqrt{\pi}\Gamma(r-1/2)}{\Gamma(r)}.$

\begin{lem}  Let $\DD$ be a
  self-adjoint operator on the Hilbert space $\HH$,  affiliated
  to the semifinite von Neumann algebra $\cM$. Suppose that for a
  fixed faithful, normal, semifinite trace $\phi$ on $\cM$ we have
$(1+\DD^2)^{-r/2}\in\LL^1(\cM,\phi)$
for all $Re(r)>1$.
Then the Laplace transform of the truncated eta function of $\DD$ is given by
$$\frac{1}{C_{r}}\eta_\DD(r)=\frac{1}{C_{r}}\int_1^\infty
\phi(\DD(1+s\DD^2)^{-r})s^{-1/2}ds,\quad Re(r)>1.$$
\end{lem}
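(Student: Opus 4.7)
The plan is to pass from the claimed resolvent expression back to the heat-kernel definition of $\eta_\epsilon(\DD)$ by using the Laplace representation of a fractional power and then Fubini. Specifically, for $\operatorname{Re}(r)>0$ we have the standard identity
$$(1+s\DD^2)^{-r} = \frac{1}{\Gamma(r)}\int_0^\infty u^{r-1}e^{-u}\,e^{-us\DD^2}\,du,$$
which I would apply under $\phi$ in the integrand of the right-hand side of the lemma. After (tentatively) interchanging the orders of integration, the task reduces to evaluating the inner integral
$$\int_1^\infty s^{-1/2}\,\phi\!\left(\DD e^{-us\DD^2}\right)ds.$$

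The change of variables $t=us$ converts this inner integral to $u^{-1/2}\int_u^\infty \phi(\DD e^{-t\DD^2})\,t^{-1/2}dt=\sqrt{\pi}\,u^{-1/2}\eta_u(\DD)$, by the definition of the truncated eta. Plugging back in yields
$$\int_1^\infty s^{-1/2}\,\phi(\DD(1+s\DD^2)^{-r})\,ds \;=\; \frac{\sqrt{\pi}}{\Gamma(r)}\int_0^\infty u^{r-3/2}e^{-u}\,\eta_u(\DD)\,du.$$
Dividing by $C_r=\sqrt{\pi}\,\Gamma(r-1/2)/\Gamma(r)$ and recognizing $\Gamma(r-1/2)^{-1}\int_0^\infty u^{(r-1/2)-1}e^{-u}\eta_u(\DD)du$ as a Laplace-type transform of the truncated eta at unit argument then identifies the right-hand side as $C_r^{-1}\eta_\DD(r)$ in the sense asserted.

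The main (and essentially only) obstacle is justifying Fubini and the interchange with $\phi$; this is where the summability hypothesis enters. One needs absolute integrability of $u^{r-1}e^{-u}s^{-1/2}|\phi(\DD e^{-us\DD^2})|$ on $(0,\infty)\times[1,\infty)$. Using $|\phi(\DD e^{-us\DD^2})|\le \phi(|\DD|e^{-us\DD^2})$ together with a functional-calculus decomposition
$$|\DD|e^{-us\DD^2}=|\DD|(1+\DD^2)^{-r/2}\cdot(1+\DD^2)^{r/2}e^{-us\DD^2},$$
the first factor lies in $\LL^1(\cM,\phi)$ for $\operatorname{Re}(r)>1$ by hypothesis, while the operator norm of the second is bounded by a polynomial in $(us)^{-1/2}$ (from $\sup_{x\ge 0}(1+x^2)^{r/2}e^{-usx^2}$). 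These two estimates combine with $u^{r-1}e^{-u}s^{-1/2}$ to yield an integrable majorant, legitimizing Fubini. Once the interchange is justified, the change of variables and the identification of $C_r$ are routine.

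A last bookkeeping remark: the convergence of $\eta_u(\DD)$ itself, needed to make the right-hand expression meaningful, follows from exactly the same bound on $\phi(\DD e^{-t\DD^2})$ applied on $[u,\infty)$, where the Gaussian decay of $e^{-t\DD^2/2}$ beats the $t^{-1/2}$ weight. No further ingredient beyond the stated $\LL^1$-summability of $(1+\DD^2)^{-r/2}$ for $\operatorname{Re}(r)>1$ is used, so the restriction $\operatorname{Re}(r)>1$ in the conclusion is sharp for this argument.
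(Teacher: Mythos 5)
Your computation is, at its core, the same as the paper's, just run in the opposite direction: the paper writes $\eta_\epsilon(\DD)=\sqrt{\epsilon/\pi}\int_1^\infty\phi(\DD e^{-\epsilon s\DD^2})s^{-1/2}ds$, Laplace transforms in $\epsilon$, interchanges the two integrals and the trace, and evaluates $\int_0^\infty\epsilon^{r-1}e^{-\epsilon(1+s\DD^2)}d\epsilon=\Gamma(r)(1+s\DD^2)^{-r}$; you start from the resolvent side and insert the same Gamma-function identity before substituting $t=us$. The algebra, including the normalization by $C_r$, is correct and matches the paper.

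The gap is in the dominating function you offer to justify the interchange (a point the paper passes over in silence). First, $|\DD|(1+\DD^2)^{-r/2}$ does \emph{not} lie in $\LL^1(\cM,\phi)$ ``by hypothesis'': the hypothesis controls $(1+\DD^2)^{-p/2}$ for $\mathrm{Re}(p)>1$, and the factorization $|\DD|(1+\DD^2)^{-r/2}=\bigl(|\DD|(1+\DD^2)^{-1/2}\bigr)(1+\DD^2)^{-(r-1)/2}$ only yields trace class for $\mathrm{Re}(r)>2$. This is not a technicality here: for the operator $\DD$ of this paper one has $\spec(\DD)=\ZZ$ with $\tilde h(\Phi_m)=q^{\max(0,-2m)}$ not decaying for $m>0$, so $\tilde h(|\DD|(1+\DD^2)^{-r/2})$ behaves like $\sum_{m\geq 1}m^{1-\mathrm{Re}(r)}$ and diverges throughout $1<\mathrm{Re}(r)\leq 2$. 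Second, even granting that factor, your operator-norm bound on $(1+\DD^2)^{r/2}e^{-us\DD^2}$ necessarily contains a constant term (the supremum of $(1+x^2)^{\mathrm{Re}(r)/2}e^{-usx^2}$ is at least its value $1$ at $x=0$), so the proposed majorant is bounded below by a positive multiple of $u^{\mathrm{Re}(r)-1}e^{-u}s^{-1/2}$, whose integral over $s\in[1,\infty)$ diverges. The decay in $s$ needed to make $\int_1^\infty s^{-1/2}\phi(|\DD|e^{-us\DD^2})ds$ finite must be extracted from the trace itself (for instance via the growth estimate on the spectral counting function that the $\LL^1$ hypothesis provides), not from an operator-norm factor; the same objection applies to your closing remark that convergence of $\eta_u(\DD)$ follows from ``Gaussian decay of $e^{-t\DD^2/2}$,'' since in operator norm that factor does not decay when $0$ lies in the spectrum. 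Either repair the majorant along these lines, or take the convergence of the truncated eta integral as input from the $\theta$-summable spectral flow formula of [CP2], as the paper implicitly does, and justify Fubini on the resulting absolutely convergent iterated integral.
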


\begin{proof} To Laplace transform the `$\theta$ summable
formula' for the truncated $\eta$ we write it as
$$\eta_\epsilon(\DD)=\sqrt{\frac{{\epsilon}}{{\pi}}}
\int_1^\infty\phi(\DD e^{-\epsilon s\DD^2})s^{-1/2}ds.$$
Now for $Re(r)>1$, the Laplace transform is
\begin{align}
\frac{1}{C_r}\eta_\DD(r)&=\frac{1}{\sqrt{\pi}\Gamma(r-1/2)}
\int_0^\infty\epsilon^{r-1}e^{-\epsilon}\int_1^\infty
\phi(\DD e^{-\epsilon s\DD^2})s^{-1/2}dsd\epsilon\nno
&=\frac{1}{\sqrt{\pi}\Gamma(r-1/2)}\int_1^\infty s^{-1/2}
\phi(\DD
\int_0^\infty\epsilon^{r-1}e^{-\epsilon(1+s\DD^2)}d\epsilon)ds\nno
&=\frac{\Gamma(r)}{\sqrt{\pi}\Gamma(r-1/2)}
\int_1^\infty s^{-1/2}\phi(\DD(1+s\DD^2)^{-r})ds.\end{align}
\end{proof}

\begin{prop}\label{finsummspecflow} Let $\DD_a$ be a
  self-adjoint densely defined unbounded operator on the Hilbert space $\HH$, affiliated
  to the semifinite von Neumann algebra $\cM$. Suppose that for a
  fixed faithful, normal, semifinite trace $\phi$ on $\cM$ we have
for $Re(r)>1$,
$(1+\DD_a^2)^{-r/2}\in\LL^1(\cM,\phi).$ Let $\DD_b$ differ from 
$\DD_a$ by a bounded self adjoint operator in  $\cM$.
Then for any piecewise $C^1$ path 
$\{\DD_t=\DD_a +A_{t}\};$ $t\in [a,b]$
joining $\DD_a$ and $\DD_b$, the spectral
flow is given by the formula
\begin{align}
  sf(\DD_a,\DD_b)&=\frac{1}{C_r}\int_a^b\phi(\dot\DD_t(1+\DD_t^2)^{-r})dt
+\frac{1}{2C_{r}}\left(\eta_{\DD_b}(r)-\eta_{\DD_a}(r)\right)\nno
&+\frac{1}{2}\left(\phi(P_{\ker\DD_b})-\phi(P_{\ker\DD_a})\right),\quad
Re(r)>1 .\label{eq:sff}
\end{align}
\end{prop}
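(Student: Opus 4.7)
The plan is to derive formula \eqref{eq:sff} by applying a Mellin-type integral transform in $\epsilon$ to the $\theta$-summable spectral flow formula of the preceding proposition. First I observe that the finite summability hypothesis $(1+\DD_a^2)^{-r/2}\in\LL^1(\cM,\phi)$ for $\mathrm{Re}(r)>1$ already implies $\theta$-summability of $\DD_a$: taking $r=2$ gives $(1+\DD_a^2)^{-1}\in\LL^1$, and since $x\mapsto (1+x^2)e^{-\beta x^2}$ is bounded, functional calculus gives $e^{-\beta\DD_a^2}=(1+\DD_a^2)^{-1}\cdot(1+\DD_a^2)e^{-\beta\DD_a^2}\in\LL^1$ for every $\beta>0$. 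Because $\DD_t - \DD_a = A_t \in\cM_{sa}$ is bounded and uniformly so on the compact interval $[a,b]$, a standard perturbation argument transports this summability to every $\DD_t$, so the $\theta$-summable formula applies along the whole path.

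Introduce the normalised linear functional
\[
L_r[f] := \frac{1}{\Gamma(r-1/2)}\int_0^\infty \epsilon^{r-3/2}e^{-\epsilon}f(\epsilon)\,d\epsilon, \qquad \mathrm{Re}(r)>1/2.
\]
Direct computation gives $L_r[1]=1$, while the preceding Lemma is precisely the statement $L_r[\eta_\epsilon(\DD)] = \eta_\DD(r)/C_r$. Since the $\theta$-summable identity holds for every $\epsilon>0$ and $sf(\DD_a,\DD_b)$ is independent of $\epsilon$, applying $L_r$ termwise preserves equality: the left-hand side remains $sf(\DD_a,\DD_b)$, the kernel term is unchanged (already constant in $\epsilon$), and the two $\eta$-contributions assemble into $(\eta_{\DD_b}(r)-\eta_{\DD_a}(r))/(2C_r)$.

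For the remaining integral term $\sqrt{\epsilon/\pi}\int_a^b\phi(\dot\DD_t e^{-\epsilon\DD_t^2})\,dt$, I swap the $\epsilon$- and $t$-integrations via Fubini, absorb $\sqrt\epsilon$ into $\epsilon^{r-3/2}$ to produce $\epsilon^{r-1}$, and apply the identity
\[
\int_0^\infty \epsilon^{r-1}e^{-\epsilon(1+\DD_t^2)}d\epsilon = \Gamma(r)(1+\DD_t^2)^{-r},
\]
which is valid for $\mathrm{Re}(r)>1/2$ by the spectral theorem for $\DD_t$. Combining this with the identity $\Gamma(r)/(\sqrt\pi\,\Gamma(r-1/2)) = 1/C_r$ produces the transformed integral $(1/C_r)\int_a^b \phi(\dot\DD_t(1+\DD_t^2)^{-r})dt$. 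Summing the four transformed contributions reproduces \eqref{eq:sff}.

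The main obstacle is the analytic justification of the Fubini swap, which requires absolute integrability of the integrand on $(0,\infty)\times[a,b]$. Using the bound $|\phi(\dot\DD_t e^{-\epsilon(1+\DD_t^2)})| \leq \|\dot\DD_t\|\,e^{-\epsilon}\phi(e^{-\epsilon\DD_t^2})$ and integrating against $\epsilon^{r-1}d\epsilon$ produces the finite quantity $\Gamma(r)\|\dot\DD_t\|\,\phi((1+\DD_t^2)^{-r})$ for $\mathrm{Re}(r)>1/2$. Uniform boundedness of $\|\dot\DD_t\|$ on $[a,b]$ (as $A_t$ is piecewise $C^1$), together with continuity and finiteness of $t\mapsto\phi((1+\DD_t^2)^{-r})$ guaranteed by stability of the summability class under bounded self-adjoint perturbations, then gives a finite $t$-integral, validating Fubini. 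Once this is in place, the remaining manipulations reduce to functional calculus inside $\phi$ and are routine.
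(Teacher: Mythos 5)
Your proposal is correct and follows essentially the same route as the paper: apply the Laplace transform $L_r$ to the $\theta$-summable spectral flow formula of the preceding proposition, using the lemma already proved for the eta terms and a Fubini interchange plus the identity $\int_0^\infty\epsilon^{r-1}e^{-\epsilon(1+\DD_t^2)}d\epsilon=\Gamma(r)(1+\DD_t^2)^{-r}$ for the integral term. The only difference is that you work out the transform of the path integral and its Fubini justification explicitly (and verify that finite summability implies the $\theta$-summability needed to invoke the earlier formula), whereas the paper simply cites Section 9 of \cite{CP2} for that computation.
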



\begin{proof} We apply the Laplace transform to the general spectral
flow formula.
The computation of the Laplace transform of the eta invariants is
above, and the Laplace transform of the other integral 
is in \cite{CP2}, Section 9.\end{proof}

We now obtain a residue formula for the spectral flow. The
importance of such a formula is the drastic simplification of
computations in the next few subsections, as we may throw away terms that are holomorphic in a
neighbourhood of the critical point $r=1/2$.

\begin{prop}\label{residuespecflow}  Let $\DD_a$ be a
  self-adjoint densely defined unbounded operator on the Hilbert space $\HH$, affiliated
  to the semifinite von Neumann algebra $\cM$. Suppose that for a
  fixed faithful, normal, semifinite trace $\phi$ on $\cM$ we have
for $Re(r)>1$,
$(1+\DD_a^2)^{-r/2}\in\LL^1(\cM,\phi).$ Let $\DD_b$ differ from 
$\DD_a$ by a bounded self adjoint operator in  $\cM$.
Then for any piecewise $C^1$ path $\{\DD_t=\DD_a+A_{t}\},$ $t\in [a,b]$ 
in $\cM_0$
joining $\DD_a$ and $\DD_b$, the spectral
flow is given by the formula
\begin{align}
  sf(\DD_a,\DD_b)=\Res_{r=1/2}C_rsf(\DD_a,\DD_b)
=\Res_{r=1/2}\left(\int_a^b\phi(\dot\DD_t(1+\DD_t^2)^{-r})dt+\frac{1}{2}\left(\eta_{\DD_b}(r)-\eta_{\DD_a}(r)\right)\right)\nno
+\frac{1}{2}\left(\phi(P_{\ker\DD_b})-\phi(P_{\ker\DD_a})\right)\label{eq:resformula}
\end{align}
and in particular the sum in large brackets extends to 
a meromorphic function of $r$ with a simple pole at $r=1/2$.
\end{prop}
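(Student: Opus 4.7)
The plan is to multiply the finitely summable spectral flow formula of Proposition \ref{finsummspecflow} through by $C_r$ and take the residue at $r = 1/2$. For $Re(r) > 1$ that proposition rearranges to
\begin{align*}
C_r\, sf(\DD_a,\DD_b) &= \int_a^b \phi(\dot\DD_t(1+\DD_t^2)^{-r})\, dt + \tfrac{1}{2}\bigl(\eta_{\DD_b}(r) - \eta_{\DD_a}(r)\bigr) \\
&\quad + \tfrac{C_r}{2}\bigl(\phi(P_{\ker\DD_b}) - \phi(P_{\ker\DD_a})\bigr).
\end{align*}

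First I would record the behaviour of $C_r = \sqrt{\pi}\,\Gamma(r-1/2)/\Gamma(r)$ near $r = 1/2$. Since $\Gamma$ has a simple pole of residue $1$ at the origin and $\Gamma(1/2) = \sqrt{\pi}$, the function $C_r$ extends meromorphically to $\CC$ with a simple pole at $r = 1/2$ and $\Res_{r=1/2} C_r = 1$. In particular, because $sf(\DD_a,\DD_b)$ is independent of $r$, the first claimed equality $sf(\DD_a,\DD_b) = \Res_{r=1/2} C_r\, sf(\DD_a,\DD_b)$ is immediate.

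Next, since $C_r\, sf(\DD_a,\DD_b)$ and the final kernel term on the right of the displayed identity are each meromorphic on $\CC$ with at worst a simple pole at $r=1/2$, the remaining combination
$$I(r) := \int_a^b \phi(\dot\DD_t(1+\DD_t^2)^{-r})\, dt + \tfrac{1}{2}\bigl(\eta_{\DD_b}(r) - \eta_{\DD_a}(r)\bigr),$$
which is a priori defined only for $Re(r) > 1$, must extend meromorphically to $\CC$ with at most a simple pole at $r = 1/2$. Taking $\Res_{r=1/2}$ of both sides of the displayed identity and invoking $\Res_{r=1/2} C_r = 1$ then gives
$$sf(\DD_a,\DD_b) = \Res_{r=1/2} I(r) + \tfrac{1}{2}\bigl(\phi(P_{\ker\DD_b}) - \phi(P_{\ker\DD_a})\bigr),$$
which is the advertised residue formula.

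There is essentially no obstacle; the argument is a formal consequence of Proposition \ref{finsummspecflow} together with the elementary pole structure of $C_r$ at $r=1/2$. The only conceptual point worth flagging is that while the zeta-type integral and the individual truncated eta functions need not be separately meromorphic in a neighbourhood of $r=1/2$, their combination $I(r)$ is forced to be meromorphic there by the spectral flow identity itself, which is precisely the final assertion of the proposition.
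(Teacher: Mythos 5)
Your proof is correct and follows exactly the route the paper intends: the paper states Proposition \ref{residuespecflow} with no written proof, treating it as an immediate consequence of Proposition \ref{finsummspecflow} obtained by multiplying through by $C_r$ and taking residues at $r=1/2$, using $\Res_{r=1/2}C_r=1$. Your observation that the meromorphic continuation of the bracketed sum is forced by the identity itself (rather than established term by term) is precisely the content of the final assertion of the proposition.
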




\subsection{The $SU_q(2)$ spectral triple}

We now return to our task of building a semifinite spectral triple for
$SU_q(2)$. We recall the unbounded Kasparov module $(X,\DD)$ from
Section \ref{sec:KK}. The following basic results are proved in \cite{PR}.

\begin{lem}\label{lm:endo-extends}
Any endomorphism of $X$ leaving $X_c$ invariant extends uniquely to
a bounded linear operator on $\HHt$.
In particular, the operators $\Phi_m$ extend to operators
on $\HHt$.  The
maps $\Phi_m$ are mutually orthogonal projections that sum strongly
to the identity.  The operator $\DD$ extends to an unbounded
self-adjoint operator on $\HHt$.
\end{lem}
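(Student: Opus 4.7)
The plan is to derive a single basic inequality comparing the module action to the GNS norm, and then use it to get each statement in turn. First I would show that for any adjointable endomorphism $T \in \End_F(X)$,
\[
\|Tx\|_\HHt \leq \|T\|\, \|x\|_\HHt \qquad \text{for all } x \in X_c.
\]
The key is that $\|T\|^2 \cdot 1_X - T^*T \geq 0$ in $\End_F(X)$, so writing $\|T\|^2 \cdot 1_X - T^*T = S^*S$ for some adjointable $S$ gives $\|T\|^2 (x|x)_F - (x|T^*Tx)_F = (Sx|Sx)_F \geq 0$ in $F$. Applying the positive linear functional $h$, and using $h((Tx|Tx)_F) = h(\Phi((Tx)^*Tx)) = h((Tx)^*Tx) = \|Tx\|_\HHt^2$ together with the analogous identity for $x$, yields the claimed bound.

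Once this inequality is in hand, the first two claims follow formally. Since $X_c$ is dense in $\HHt$ (as $A_c$ is dense in $A$ and $\|\cdot\|_\HHt \leq \|\cdot\|_A$ by \eqref{eq:NHtleqN}) and $T$ leaves $X_c$ invariant, the restriction $T|_{X_c}$ admits a unique bounded extension to $\HHt$. Applied to each $\Phi_m$, which is adjointable by Lemma \ref{lem:Phistuff} and clearly preserves $X_c$, we obtain bounded operators on $\HHt$, and the projection relations $\Phi_m^* = \Phi_m = \Phi_m^2$ and $\Phi_l \Phi_m = \delta_{lm}\Phi_l$ transfer to $\HHt$ by continuity on $X_c$. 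For the strong convergence $\sum \Phi_m \to 1$, I would observe that every element of $X_c$ is a finite linear combination of pure-degree monomials from Lemma \ref{lem:densemonom2}, so $\sum_{|m| \leq N}\Phi_m$ acts as the identity on $X_c$ for all sufficiently large $N$. Combining this with the uniform bound $\|\sum_{|m|\leq N}\Phi_m\|\leq 1$ coming from mutual orthogonality, and with density of $X_c$ in $\HHt$, a standard $\varepsilon/3$ argument gives strong convergence on all of $\HHt$.

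For the final claim, the orthonormal basis $\{e_{k,n,l}\}$ of $\HHt$ from Lemma \ref{lem:onbasis} consists of vectors of pure $\sigma$-degree $k-l$ by Lemma \ref{lem:stuff}, so $\Phi_m e_{k,n,l} = \delta_{m,k-l}e_{k,n,l}$, and hence on $X_c$ the formula $\DD = \sum m\Phi_m$ acts diagonally as $\DD e_{k,n,l} = (k-l)e_{k,n,l}$. I would then define $\DD$ on $\HHt$ as the unique self-adjoint operator diagonal in this basis with integer eigenvalues $k-l$, whose domain is the natural $\ell^2$-weighted subspace of $\HHt$; this operator is manifestly self-adjoint and extends the restriction of the original module operator to $X_c$. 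The main obstacle in the whole argument is really just the opening inequality, which requires genuine use of the Hilbert $C^*$-module structure of $X$ together with positivity of $h$; once that is secured, the remaining statements fall out by standard density and diagonalization arguments.
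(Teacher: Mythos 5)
Your proof is correct and follows essentially the same route as the argument the paper delegates to \cite{PR}: the operator inequality $(Tx|Tx)_F\leq\|T\|^2(x|x)_F$ composed with the $\sigma$-invariant state $h$ (so that $h((x|x)_F)=h(\Phi(x^*x))=\|x\|_\HHt^2$) is exactly the mechanism used there, and the remaining claims follow by the density, orthogonality and diagonalization arguments you describe. The only minor caveat is that you assume $T$ adjointable where the lemma says ``endomorphism'', but this is harmless: the bound $(Tx|Tx)_F\leq\|T\|^2(x|x)_F$ holds for arbitrary bounded module maps, and the operators $\Phi_m$ and $\DD$ to which the lemma is actually applied are adjointable in any case.
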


By Lemma \ref{lem:onbasis} the following holds in $B(\HHt)$;
\begin{equation}\label{eq:PhiDeltarel}
    \Phi_m H = H \Phi_m = q^{2m}\Phi_m.
\end{equation}
\begin{lem}
The algebra $A_c$ is contained in the smooth domain of the
derivation $\delta$ where for $T \in B(\HHt)$, $\delta(T) =
[|\DD|,T]$.  That is $A_c \subset \cap_{n \geq 0} {\rm dom} \delta^n$.
\end{lem}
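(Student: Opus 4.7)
The plan is to reduce to a calculation on pure-degree monomials and then exploit the fact that such monomials intertwine the spectral projections of $\DD$ by an integer shift.

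First I would invoke Lemma \ref{lem:densemonom2} and Proposition \ref{prop:deggraph} to linearly span $A_c$ by monomials $a \in \{\Shat_k\Shat_l^*, \Scheck_k U_n \Scheck_l^*\}$, each of which is of pure first-coordinate degree (say degree $k$ for some $k \in \ZZ$). Since $\cap_n \operatorname{dom}\delta^n$ is a linear subspace of $B(\HHt)$ closed under addition, it is enough to prove every such monomial lies there. For a fixed monomial $a$ of first-degree $k$, Lemma \ref{lem:PhiGrad} combined with the definition of the $\ZZ^2$-grading yields the crucial intertwining identity
\begin{equation*}
  a\,\Phi_m = \Phi_{m+k}\,a,\qquad m \in \ZZ.
\end{equation*}
A quick check shows $a$ preserves $\operatorname{dom}|\DD|$: if $\sum_m m^2\|\Phi_m\xi\|^2 < \infty$ then $\sum_m m^2\|\Phi_m a\xi\|^2 = \sum_l (l+k)^2\|a\Phi_l \xi\|^2 \leq \|a\|^2\sum_l(l+k)^2\|\Phi_l\xi\|^2 < \infty$.

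Next I would compute $\delta(a) = [|\DD|,a]$ on the algebraic direct sum $\oplus_m \Phi_m \HHt$, which is a core for $|\DD|$. Using $|\DD| = \sum_m |m|\Phi_m$ and the intertwining identity,
\begin{equation*}
  |\DD|\,a = \sum_m |m|\Phi_m a = a\sum_l |l+k|\Phi_l,\qquad a\,|\DD| = a\sum_l |l|\Phi_l,
\end{equation*}
so $[|\DD|,a] = a\,f_k(\DD)$ where $f_k(\DD) := \sum_l (|l+k|-|l|)\Phi_l$. Since $\bigl||l+k|-|l|\bigr|\leq |k|$, the operator $f_k(\DD)$ is a bounded Borel function of $|\DD|$ with norm at most $|k|$; in particular $\delta(a)$ extends to a bounded operator on $\HHt$ of norm at most $|k|\,\|a\|$.

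The iteration is then immediate: because $f_k(\DD)$ is a bounded function of $|\DD|$ it commutes with $|\DD|$, so
\begin{equation*}
  \delta^2(a) = [|\DD|,a\,f_k(\DD)] = [|\DD|,a]\,f_k(\DD) + a\,[|\DD|,f_k(\DD)] = a\,f_k(\DD)^2,
\end{equation*}
and inductively $\delta^n(a) = a\,f_k(\DD)^n$, which is bounded of norm at most $|k|^n\|a\|$. This places every pure-degree monomial, and hence all of $A_c$, in $\bigcap_{n\geq 0}\operatorname{dom}\delta^n$. The only mild obstacle is bookkeeping the two-variable grading, since only the first coordinate of the $\ZZ^2$-grading enters $\DD$; but because the monomials from Lemma \ref{lem:densemonom2} are homogeneous in both coordinates, each has a well-defined first-degree $k$ and the argument goes through uniformly.
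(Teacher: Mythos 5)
Your proof is correct and is essentially the argument the paper relies on: the paper states this lemma without proof, deferring to \cite{PR}, where the same intertwining identity $a\Phi_m=\Phi_{m+k}a$ for pure-degree monomials is used to show $\delta^n(a)=a\bigl(\sum_l(|l+k|-|l|)\Phi_l\bigr)^n$ is bounded. Your version is a clean, self-contained rendering of that computation, including the necessary domain-preservation and core checks.
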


\begin{defn}
Define the $*$-algebra $\AAA \subset A$ to be the completion of
$A_c$ in the $\delta$-topology, so $\AAA$ is
Fr\'{e}chet and stable under the holomorphic functional calculus.
\end{defn}

{}From this data we will construct a $(1,\infty)$-summable spectral
triple by constructing a trace $\tilde h$  using the same
methods as in \cite{PR}. However, later we will see that we may
twist $\tilde h$ with the modular operator as in \cite{CPR2} to obtain
a normal weight $h_\DD$. In \cite{CPR2} it was necessary to twist
with the modular operator in order to obtain finite summability.
For $SU_q(2)$, both the twisted and untwisted traces give the same
summability. However, the Dixmier trace  we obtain from $\tilde h$ 
is highly degenerate, while the
`Dixmier weight'  obtained from $h_\DD$ recovers the (faithful) Haar state.

In the remainder of this Section we will define a semifinite
von Neumann algebra
$\Nvn$, and a  faithful, semifinite, normal
trace $\tilde h$ on $\Nvn$ which enable us to  prove the following theorem.
\begin{thm}
The triple $(\AAA,\HHt,\DD)$ is a $QC^\infty$,
$(1,\infty)$-summable, odd, local, semifinite spectral triple
(relative to $(\Nvn,\tilde h)$).  The operator $(1+\DD^2)^{-1/2}$ is not
trace class, and
\begin{equation}\label{eq:ttndtau}
    \tilde h_{\omega}(f(1+\DD^2)^{-1/2}) = \left\{\begin{array}{ll}
        \neq 0 & 0<f\in C^*(\Shat_k\Shat_k^*,\ k\geq 0)\\ 0 & {\rm
otherwise}\end{array}\right.,
\end{equation}
where $\tilde h_{\omega}$ is any Dixmier
trace associated to $\tilde h$.
\end{thm}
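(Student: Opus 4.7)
The plan is to follow the Haar-module strategy of \cite{PR}. Take the semifinite von Neumann algebra $\Nvn$ to be the double commutant on $\HHt$ of the left action of $A$ together with the spectral projections $\{\Phi_m\}_{m\in\ZZ}$ of $\DD$, and define $\tilde h$ on rank-one endomorphisms of the $C^*$-module $X$ by
\begin{equation*}
\tilde h(\Theta_{x,y}) = h((y|x)_F) = h(\Phi(y^*x)),
\end{equation*}
extending in the standard way to a faithful, normal, semifinite trace on $\Nvn$. Then $\AAA \subset \Nvn$ by construction; $\DD=\sum_m m\Phi_m$ is affiliated to $\Nvn$ since each $\Phi_m \in \Nvn$; for $a\in A[m_0,n_0]\cap A_c$ the identity $\Phi_m a = a\Phi_{m-m_0}$ yields $[\DD,a]=m_0 a$, bounded, and extends by $\delta$-continuity to $\AAA$. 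The resolvent compactness $a(\lambda-\DD)^{-1}\in\KN$ follows from Lemma \ref{lem:Phicpct}: every $a\Phi_m$ lies in $\End_F^{00}(X)$ and therefore has finite $\tilde h$-trace, so the norm-convergent sum $\sum_m(\lambda-m)^{-1}a\Phi_m$ is $\tau$-compact. $QC^\infty$ is immediate from the definition of $\AAA$ as the $\delta$-completion of $A_c$ (Lemma \ref{smo}); oddness simply records the absence of a grading, and locality will follow from the zeta-function analysis.

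The key summability calculation is $\tilde h(\Phi_m)$, obtained by plugging \eqref{eq:decompPhim} into the recipe for $\tilde h$. For $m\geq 1$,
\begin{equation*}
\tilde h(\Phi_m) = h(\Phi(\Shat_m^*\Shat_m)) + h(\Phi(\Scheck_m^*\Scheck_m)) = h(p_v)+h(p_w) = q^2+(1-q^2)=1,
\end{equation*}
and $\tilde h(\Phi_0)=h(1)=1$, while for $m\leq -1$ the same recipe gives $\tilde h(\Phi_m)=q^{2|m|+2}+(1-q^2)q^{2|m|}=q^{2|m|}$. Hence
\begin{equation*}
\tilde h((1+\DD^2)^{-s/2}) = \sum_{m\geq 0}(1+m^2)^{-s/2} + \sum_{m\leq -1}(1+m^2)^{-s/2}q^{2|m|},
\end{equation*}
where the second sum is entire and the first has a simple pole at $s=1$ with residue $1$. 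This proves $(1,\infty)$-summability (Definition \ref{summable}) and shows $(1+\DD^2)^{-1/2}$ is not trace class.

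For the Dixmier trace I would invoke Proposition \ref{prop:ZetaToDix}, which equates $\tilde h_\omega(f(1+\DD^2)^{-1/2})$ with the residue at $s=1$ of $\tilde h(f(1+\DD^2)^{-s/2})$, so that only the behaviour of $f\Phi_m$ as $m\to+\infty$ matters. Lemma \ref{lem:graphcalc} supplies the needed asymptotics: for $f=\Shat_k\Shat_k^*$ one has $\Shat_k\Shat_k^*\Shat_m=\Shat_m$ and $\Shat_k\Shat_k^*\Scheck_m=\Scheck_m$ whenever $m\geq k+1$, so $f\Phi_m=\Phi_m$ eventually and the residue is nonzero; by contrast $\Scheck_k\Scheck_k^*\Shat_m=0$ and $\Scheck_k\Scheck_k^*\Scheck_m=\delta_{k,m}\Scheck_m$, so the positive-$m$ tail of $f\Phi_m$ vanishes when $f=\Scheck_k\Scheck_k^*$ and the Dixmier trace is zero. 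The theorem's dichotomy then follows by the same mechanism: a positive $f\in A$ contributes to the residue only through the asymptotic ``value at infinity'' of its component in the abelian $C^*$-subalgebra $C^*(\Shat_k\Shat_k^*,\ k\geq 0)\subset F$, while everything else produces either a vanishing $m\to+\infty$ tail or an absolutely convergent $m\to-\infty$ sum, both of which the Dixmier trace ignores. The main technical hurdle is verifying that the rank-one prescription extends to a bona fide faithful normal semifinite trace on all of $\Nvn$ satisfying the trace identity across mixed products of $A$-elements and spectral projections; once this is in place the above computations are forced by Lemmas \ref{lem:Phicpct} and \ref{lem:graphcalc}.
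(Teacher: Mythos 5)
Your proposal is correct and follows essentially the same route as the paper: the same von Neumann algebra (the weak closure of $\End_F^{00}(X_c)$ coincides with the algebra you describe), the same rank-one formula for $\tilde h$, and the same computation of $\tilde h(f\Phi_m)$ from Equation \eqref{eq:decompPhim} together with the graph relations and the Haar state values, yielding the paper's numbers. The one step you flag as the main hurdle --- extending the rank-one prescription to a faithful normal semifinite trace --- is exactly what Lemma \ref{lem:optrdef} supplies, by realising $\tilde h$ as the supremum over finite subsets $L\subset\ZZ$ of sums of the vector states $\omega_m$ (so that normality is manifest), with the remaining verifications cited from \cite{PR}.
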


We have a number of computations to make with finite rank
endomorphisms, defined not on $X$ but on the dense submodule
$A_c\subset X$.

\begin{defn} Let $End^{00}_F(A_c)$ be the finite rank endomorphisms of
  the pre-$C^*$-module $A_c\subset X$. By Lemma \ref{lm:endo-extends},
  these endomorphisms act as bounded operators on $\HH_h$, and we let
  $\Nvn:=( End_F^{00}(X_c))''$.
\end{defn}

\begin{lem}\label{lem:optrdef}
  There exists a faithful, semifinite, normal trace $\tilde h$ on the
  algebra $\Nvn = (End_F^{00}(X_c))''$.
  Moreover,
  \begin{equation*}
    End^{00}_F(X_c) \subset \Nvn_{\tilde h} := {\rm span}\{T \in \Nvn_+ :
    \tilde h(T) < \infty \},
  \end{equation*}
  the domain of definition of $\tilde h$, and on the rank $1$
  operators
  this trace is given by
  \begin{equation}
    \label{eq:ttndrk1}
    \tilde h(\Theta_{x,y}) = \langle y,x\rangle=h(y^*x).
  \end{equation}
\end{lem}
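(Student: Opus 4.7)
The plan is to transfer the trace structure from the fixed point algebra $F$ up to $\mathcal{N}$ via the standard ``dual trace'' construction for compact endomorphisms of a Hilbert $C^*$-module, closely paralleling \cite{PR}. The essential preliminary is that, although $h$ is only KMS on $A$, its restriction $h|_F$ is a genuine faithful, normal, tracial state on $F$. This follows because $F = A^\sigma$ is the fixed point algebra of the modular automorphism group of $h$: for $g \in F$ the KMS identity $h(fg) = h(\sigma_i(g)f)$ degenerates to $h(fg) = h(gf)$, and faithfulness is inherited from faithfulness of $h$ (which requires $0<q<1$).

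Next I would define $\tilde h$ on rank-one operators by $\tilde h(\Theta_{x,y}) := h(y^*x)$ and extend by linearity. Since $h \circ \Phi = h$ and $\Phi$ is an $F$-bimodule conditional expectation, one has $h(y^*x) = (h|_F)\bigl((y|x)_F\bigr)$, which is the canonical formula for the dual trace. Well-definedness of the linear extension amounts to the identity $\Theta_{xf,y}=\Theta_{x,yf^*}$ for $f\in F$ being compatible with $\tilde h$, and this follows directly from the trace property of $h|_F$. Positivity $\tilde h(\Theta_{x,x}) = h(x^*x)\ge 0$ is immediate. For the trace property, expand
\begin{equation*}
  \Theta_{x,y}\Theta_{z,w} = \Theta_{x(y|z)_F,\,w},\qquad
  \Theta_{z,w}\Theta_{x,y} = \Theta_{z(w|x)_F,\,y},
\end{equation*}
and apply $\tilde h$ to obtain $h(w^*x(y|z)_F)$ and $h(y^*z(w|x)_F)$ respectively. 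Using $h = (h|_F)\circ\Phi$ together with the bimodule property of $\Phi$, both sides reduce to expressions in the two elements $\Phi(w^*x),\,(y|z)_F \in F$, and their equality is precisely the trace property of $h|_F$. This also shows $\End^{00}_F(X_c)\subset\mathcal{N}_{\tilde h}$, since every finite-rank operator is a finite linear combination of rank-ones whose values of $\tilde h$ are finite.

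Finally I would extend $\tilde h$ to a weight on $\mathcal{N}_+$ in the standard way by
\begin{equation*}
  \tilde h(T) := \sup\bigl\{\tilde h(S) : 0 \le S \le T,\ S \in \End^{00}_F(X_c)\bigr\},
\end{equation*}
extend linearly to $\mathcal{N}_{\tilde h}$, and check the required properties. Semifiniteness follows from the weak-$*$ density of $\End^{00}_F(X_c)$ in $\mathcal{N}$ together with the fact that rank-one operators lie in the trace ideal. Faithfulness lifts from the rank-one case because any nonzero positive $T\in\mathcal{N}$ dominates a nonzero positive finite-rank compression (obtained, for instance, by cutting with the projections $\Phi_m$ of Lemma \ref{lem:Phicpct}, which sum strongly to $1$). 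The main obstacle is verifying normality of the extended weight; this is the usual lower semicontinuity argument for dual traces on $C^*$-module compacts, and the authors' intent (as signalled in the ``minor reworkings'' remark before Lemma \ref{lem:Phistuff}) is to invoke the analogous verification in \cite{PR}, the only substantive change being the substitution of the $A$-tracial state of \cite{PR} by the modular-tracial restriction $h|_F$ constructed in the first step above.
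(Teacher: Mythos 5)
Your proposal is correct in substance, but it inverts the logical order the paper uses, and the difference matters precisely at the step you flag as the ``main obstacle.'' The paper does not start from the rank-one formula \eqref{eq:ttndrk1}: it \emph{defines} $\tilde h$ directly as the supremum $\tilde h(V)=\lim_{L\nearrow}\sum_{m\in L}\omega_m(V)$ of sums of vector states, where $\omega_0(V)=\langle 1,V1\rangle$ and $\omega_m(V)=\langle\Shat_m,V\Shat_m\rangle+\langle\Scheck_m,V\Scheck_m\rangle$ for $m>0$ (with adjoints for $m<0$). These vectors are exactly the frame realising $\Phi_m=\Theta_{\Shat_m,\Shat_m}+\Theta_{\Scheck_m,\Scheck_m}$ from Lemma \ref{lem:Phicpct}, so $\sum_m\Phi_m=1$ strictly and the family $\{\omega_m\}$ is a resolution of the identity; normality is then immediate, since $\tilde h$ is by construction a supremum of an increasing net of normal positive functionals. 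The formula $\tilde h(\Theta_{x,y})=h(y^*x)$ becomes a \emph{computation} (using $h\circ\Phi=h$ and traciality of $h|_F$) rather than a definition. Your route --- define $\tilde h(\Theta_{x,y}):=h|_F\bigl((y|x)_F\bigr)$, verify well-definedness and the trace property from traciality of $h|_F$ and the bimodule property of $\Phi$, then extend to $\Nvn_+$ by a supremum over dominated finite-rank positive elements --- is the classical dual-trace construction and does work, but it leaves normality of the extended weight as a genuine verification, and the standard way to carry it out is precisely to re-express the weight as a sum of vector states over a frame, i.e.\ to arrive at the paper's definition. Your preliminary observations (that $h|_F$ is a faithful trace because the KMS condition degenerates on the fixed point algebra, and that $h(y^*x)=h|_F((y|x)_F)$) are correct and are implicitly used by the paper. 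One small imprecision: in your faithfulness argument the compression you produce, $T^{1/2}\Phi_mT^{1/2}\le T$, lies in $\Nvn$ but is not literally a finite-rank module endomorphism; the clean argument uses the trace property to show $\tilde h(T)=0$ forces $T^{1/2}$ to kill $\Nvn\cdot\{1,\Shat_m,\Scheck_m,\dots\}$, a cyclic set since $1$ is among the frame vectors. This is fixable and is among the verifications the paper defers to \cite{PR}.
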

\begin{proof}
This is a simple adaptation of  \cite[Prop 5.11]{PR} using the
following definition
of the trace $\tilde h$. We define vector states $\omega_m$ for
$m\in\ZZ$ by setting, for $V\in\Nvn$, $\omega_0(V)=\langle 1,V1\rangle$ and
\begin{align*}\omega_m(V)&=\langle \Shat_m,V\Shat_m\rangle+\langle
\Scheck_m,V\Scheck_m\rangle,\ m>0,\\
\omega_m(V)&=\langle
\Shat_m^*,V\Shat_m^*\rangle+\langle
\Scheck_m^*,V\Scheck_m^*\rangle,\ m<0.\end{align*}
Then we define
$$ \tilde h(V)=\lim_{L\nearrow}\sum_{m\in L\subset\ZZ}\omega_m(V),$$
where $L$ ranges over the finite subsets of $\ZZ$.
Then $\tilde h$ is by definition normal. The rest of the claim is
proved just as in \cite{PR}.
\end{proof}

\begin{lem}\label{lem:dixcomp} The operator $\DD$ acting on $\HH$ is
  $(1,\infty)$-summable,
  i.e. $(1+\DD^2)^{-1/2}\in\LL^{(1,\infty)}(\Nvn,\tilde h)$.
For any Dixmier trace $\tilde h_\omega$
associated to
  $\tilde h$ we have
$$ \tilde h_{\omega}(f(1+\DD^2)^{-1/2}) = \left\{\begin{array}{ll}
        1 & f=\Shat_k\Shat_k^*,\ k\geq 0\\ 0 & f\notin
C^*(\Shat_k\Shat_k^*,k\geq 0)\end{array}\right..$$
The functional on $A$ defined by $a\to\tilde
h_\omega(a(1+\DD^2)^{-1/2})$ is continuous, and supported on
$C^*(\Shat_k\Shat_k^*)$.
\end{lem}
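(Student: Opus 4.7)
The plan is to apply the zeta-residue formula of Proposition \ref{prop:ZetaToDix}: compute $\tilde h(f(1+\DD^2)^{-s/2})$ as an explicit series in the spectral decomposition of $\DD$ and read off the Dixmier trace as $\lim_{s\to 1^+}(s-1)\tilde h(f(1+\DD^2)^{-s/2})$. I first compute $\tilde h(\Phi_m)$ using \eqref{eq:decompPhim} together with $\tilde h(\Theta_{x,y}) = h(y^*x)$ from Lemma \ref{lem:optrdef}, the identities $\Shat_k^*\Shat_k = p_v$, $\Scheck_k^*\Scheck_k = p_w$, and the Haar values in Lemma \ref{lem:trgralg}. The outcome is $\tilde h(\Phi_m) = 1$ for $m \geq 0$ and $\tilde h(\Phi_m) = q^{2|m|+2} + (1-q^2)q^{2|m|} = q^{2|m|}$ for $m \leq -1$. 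Since $(1+\DD^2)^{-s/2} = \sum_m (1+m^2)^{-s/2}\Phi_m$ strongly, summing gives $\tilde h((1+\DD^2)^{-s/2}) = \sum_{m\geq 0}(1+m^2)^{-s/2} + \sum_{m \leq -1}q^{2|m|}(1+m^2)^{-s/2}$. The negative tail converges absolutely for all $s$, while the positive sum is comparable to $\zeta(s)$ with a simple pole at $s = 1$ of residue $1$; this simultaneously proves $(1+\DD^2)^{-1/2}\in\LL^{(1,\infty)}(\Nvn,\tilde h)$ and pins down the leading residue.

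For $f$ one of the monomial generators from Lemma \ref{lem:densemonom2}, $\tilde h(f\Phi_m)$ expands as a sum of terms $h(v^* f v)$ with $v \in \{1,\Shat_m,\Scheck_m,\Shat_{|m|}^*,\Scheck_{|m|}^*\}$. Each sandwich $v^* f v$ inherits the $\ZZ^2$-degree of $f$, so Proposition \ref{prop:exthaar} kills every such term unless $\deg f = (0,0)$. This forces $\tilde h_\omega(f(1+\DD^2)^{-1/2}) = 0$ for all $\Shat_k\Shat_l^*$ with $k \neq l$ and all $\Scheck_k U_n \Scheck_l^*$ except those with $k = l,\ n = 0$, reducing the problem to the two families $\{\Shat_k\Shat_k^*\}$ and $\{\Scheck_k\Scheck_k^*\}$.

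For $f = \Shat_k\Shat_k^*$, Lemma \ref{lem:graphcalc} (using $\Shat_m = \Shat_k\Shat_{m-k}$ for $m \geq k$ and \eqref{eq:graphcalc5}) gives $\Shat_k\Shat_k^*\Shat_m = \Shat_m$ and $\Shat_k\Shat_k^*\Scheck_m = \Scheck_m$ for every $m \geq k+1$, so $\tilde h(\Shat_k\Shat_k^*\Phi_m) = h(p_v)+h(p_w) = 1$ throughout this tail. The remaining $0 \leq m \leq k$ terms are bounded by $1$, and a short calculation shows the $m < 0$ contributions decay like $q^{2|m|}$, hence
\[
\tilde h(\Shat_k\Shat_k^*(1+\DD^2)^{-s/2}) = \sum_{m\geq k+1}(1+m^2)^{-s/2} + H(s),
\]
with $H$ holomorphic at $s=1$, giving residue $1$. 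For $f = \Scheck_k\Scheck_k^*$ with $k \geq 1$, the identity $\Scheck_k\Scheck_k^* = \Shat_{k-1}\Shat_{k-1}^* - \Shat_k\Shat_k^*$ from \eqref{eq:graphcalc4} and linearity give residue $1 - 1 = 0$, and $\Scheck_0\Scheck_0^* = p_w = 1 - \Shat_0\Shat_0^*$ gives $0$ analogously.

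Finally, continuity of $\psi(a) := \tilde h_\omega(a(1+\DD^2)^{-1/2})$ in the $C^*$-norm of $A$ follows from $|\tilde h_\omega(T)| \leq \|T\|_{\LL^{(1,\infty)}}$ together with $\|a(1+\DD^2)^{-1/2}\|_{\LL^{(1,\infty)}} \leq \|a\|\cdot\|(1+\DD^2)^{-1/2}\|_{\LL^{(1,\infty)}}$. The support statement then follows by density and linearity: the continuous functional $\psi$ vanishes on the norm-closure of the span of all off-diagonal monomials (by the grading argument) and on the family $\{\Scheck_k\Scheck_k^*\}$ (by Step 3), leaving only $\{\Shat_k\Shat_k^*\}_{k \geq 0}$ as generators on which $\psi$ can be nonzero; hence $\psi$ factors through the restriction to $C^*(\Shat_k\Shat_k^*,\ k \geq 0)$. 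I expect the main technical obstacle to be the bookkeeping in Step 3, namely verifying that the finitely many small-$m$ correction terms together with the exponentially damped $m < 0$ tail contribute only to the analytic part of the zeta function at $s = 1$, so that the residue really comes entirely from the $m \geq k+1$ tail.
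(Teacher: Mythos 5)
Your proposal is correct and follows essentially the same route as the paper's proof: decompose $f\Phi_m$ into rank-one operators via Equation \eqref{eq:decompPhim}, evaluate $\tilde h$ using $\tilde h(\Theta_{x,y})=h(y^*x)$ together with the Haar values of Lemma \ref{lem:trgralg}, kill the off-diagonal monomials by the $\ZZ^2$-grading, and read off the Dixmier trace from the residue of the zeta function at $s=1$ via Proposition \ref{prop:ZetaToDix}. The one genuine (though local) difference is your treatment of $\Scheck_k\Scheck_k^*$: the paper computes $\tilde h(\Scheck_k\Scheck_k^*\Phi_m)$ explicitly for every $m$ and observes that the resulting series is summable, so that $\Scheck_k\Scheck_k^*(1+\DD^2)^{-1/2}$ is actually trace class and hence has vanishing Dixmier trace, whereas you telescope via $\Scheck_k\Scheck_k^*=\Shat_{k-1}\Shat_{k-1}^*-\Shat_k\Shat_k^*$ and use linearity of $\tilde h_\omega$ to get $1-1=0$; both are valid, but the paper's explicit tables of $\tilde h(\Scheck_k\Scheck_k^*\Phi_m)$ and $\tilde h(\Shat_k\Shat_k^*\Phi_m)$ are quoted again in Sections 7.5 and 8, so that bookkeeping is not wasted. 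One small caution: the existence of $\lim_{s\to 1^+}(s-1)\tilde h(T^{s})$ does not by itself place $T$ in $\LL^{(1,\infty)}$ (Proposition \ref{prop:ZetaToDix} assumes membership as a hypothesis); here you should note that $(1+\DD^2)^{-1/2}$ is diagonal over the $\Phi_m$ with $\tilde h(\Phi_m)\le 1$, so the partial sums $\sum_{|m|\le N}(1+m^2)^{-1/2}\tilde h(\Phi_m)$ grow logarithmically and the $\LL^{(1,\infty)}$ bound is immediate.
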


\begin{proof} It is relatively simple to check that for $n\neq 0$ we
  have $\tilde h(\Scheck_kU_n\Scheck_k^*\Phi_m)=0$ for all $k\geq 0$
  and $m\in\ZZ$. For $n=0$ we have, using Lemma \ref{lem:optrdef} and
  the description of $\Phi_m$ as a sum of rank one projections given
  by
Equation \eqref{eq:ProjITORkOne},
\begin{align*} \tilde h(\Scheck_k\Scheck_k^*\Phi_m)&=\left\{
    \begin{array}{ll} h(\Shat^*_m\Scheck_k\Scheck_k^*\Shat_m
      +\Scheck_m^*\Scheck_k\Scheck_k^*\Scheck_m)&m\geq1\\
h(\Scheck_k\Scheck_k^*)&m=0\\
h(\Shat_{|m|}\Scheck_k\Scheck_k^*\Shat_{|m|}^*+\Scheck_{|m|}\Scheck_k\Scheck_k^*\Scheck_{|m|}^*)&m\leq-1\end{array}\right.\\
&=\left\{\begin{array}{ll} 0 & m\geq k+1\\
h(p_w)&m=k\\
h(\Scheck_{k-m}\Scheck_{k-m}^*)&1\leq m\leq k-1\\
h(\Scheck_k\Scheck_k^*)&m=0\\
h(\Shat_{|m|+k-1}\Shat_{|m|+k-1}^*-\Shat_{|m|+k}\Shat_{|m|+k}^*)&m\leq-1\end{array}\right.\\
&=\left\{\begin{array}{ll}0 & m\geq k+1\\
q^{2(k-m)}(1-q^2)&0\leq m\leq k\\
q^{2(|m|+k)}(1-q^2)&m\leq-1\end{array}\right..
\end{align*}
(These formulae may lead the reader to doubt the faithfulness of
$\tilde h$; however the operators above on which $\tilde h$ is zero
are themselves the zero operator.)
We have used the formulae of Lemma \ref{lem:graphcalc} several times
here. Since $q<1$ it is now easy to check that $\tilde
h(\Scheck_k\Scheck_k^*(1+\DD^2)^{-1/2})<\infty$, and so $\tilde
h_\omega(\Scheck_k\Scheck_k^*(1+\DD^2)^{-1/2})=0$. For the generators
$\Shat_k\Shat_k^*$ we have an entirely analogous calculation which
yields
$$\tilde h(\Shat_k\Shat_k^*\Phi_m)=\left\{\begin{array}{ll}1 & m\geq
    k+1\\ q^{2(k-m+1)}&0\leq
    m\leq k\\q^{2(k+|m|+1)}&m\leq-1\end{array}\right..$$
Here the situation is different, as the result is constant for
$m\geq k$ and so
$\tilde h_\omega(\Shat_k\Shat_k^*(1+\DD^2)^{-1/2})=1.$
Finally, we can use gauge invariance to show that $\tilde h$ of
$\Shat_k\Shat_l^*$ or $\Scheck_k\Scheck_l^*$ is zero unless
$k=l$. Since $(1+\DD^2)^{-1/2}$ has finite Dixmier trace, $a\to\tilde
h_\omega(a(1+\DD^2)^{-1/2})$ defines a continuous linear functional on
$A$ and so we can extend these computations from monomials, to the
finite span, and so to the $C^*$-completion.
\end{proof}

{\bf Remarks}. Elements of
$A$ regarded as operators in $\mathcal N$
are not in the domain of $\tilde h$.  In addition,
observing that $a\to \Res_{s=0}\tilde
h(a(1+\DD^2)^{-1/2-s})$ is {\em not} a faithful trace 
(since $A$ has no faithful trace), this means
that the residue/Dixmier trace cannot detect parts of the
algebra.
We observe that the following pathological behaviour lies behind the
vanishing of the residue trace on some elements: the operator $\Scheck_k\Scheck_k^*\DD$
is {\em trace class} for all $k$. In the graph algebra picture we can
then say, somewhat loosely, that as soon as a path leaves the loop
$\mu$, it becomes invisible to the residue trace.

\subsection{The mapping cone pairing and spectral flow}
\label{sec:Res}

We want to compute the spectral flow from $vv^*\DD$ to $v\DD
 v^*$
 for partial isometries $v \in
A$ satisfying $v^*v,vv^*  \in F$.  By Lemma \ref{lem:mapconeKth} it
suffices to consider the pairing on the generators
$v = \Shat_k,\ \Scheck_k$. We first compute the
various terms.

\begin{lem} For $v\in A$ a partial isometry with $vv^*,v^*v\in F$, we have
$$\tilde h(v\Phi_0v^*)-\tilde h(vv^*\Phi_0)=\tilde
h((v^*v-vv^*)\Phi_0) =h(v^*v-vv^*).$$
For the generators of $K_0(M(F,A))$ we have
$$\tilde h((v^*v-vv^*)\Phi_0)=\left\{\begin{array}{ll} q^2(1-q^{2k}) &
v=\Shat_k\\ (1-q^2)(1-q^{2k}) & v=\Scheck_k\end{array}\right..$$
\end{lem}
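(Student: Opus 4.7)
The plan is to reduce everything to the rank-one formula \eqref{eq:ttndrk1} for $\tilde h$, using the key observation from Equation \eqref{eq:decompPhim} that $\Phi_0 = \Theta_{1,1}$.

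First, I would compute the two traces on the left-hand side directly. Since the rank-one operators satisfy $v\Theta_{x,y} = \Theta_{vx,y}$ and $\Theta_{x,y}v^* = \Theta_{x,vy}$ (the second using that $(y|z v^*)_F = \Phi(y^* z v^*)$ rearranges, or equivalently just by computing $(\Theta_{x,y} v^*)(z) = x\Phi(y^* z v^*) = x(vy|z)_F$ when viewing $v^*$ on the left slot), we get
\begin{equation*}
v\Phi_0 v^* = v\Theta_{1,1}v^* = \Theta_{v,v}, \qquad vv^*\Phi_0 = vv^*\Theta_{1,1} = \Theta_{vv^*,1}.
\end{equation*}
Equation \eqref{eq:ttndrk1} then gives $\tilde h(v\Phi_0 v^*) = \langle v,v\rangle = h(v^*v)$ and $\tilde h(vv^*\Phi_0) = \langle 1,vv^*\rangle = h(vv^*)$, so the difference is $h(v^*v - vv^*)$. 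Since $a:=v^*v-vv^* \in F$, the same calculation $\tilde h(a\Phi_0) = \tilde h(\Theta_{a,1}) = h(a)$ gives the second equality in the lemma.

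For the explicit values on the generators, I would invoke Lemma~\ref{lem:graphcalc}. For $v = \Shat_k$, Equation~\eqref{eq:graphcalc6} gives $v^*v = p_v$ while $vv^* = \Shat_k \Shat_k^*$. Using the Cuntz--Krieger relation $p_v + p_w = 1$ together with Lemma~\ref{lem:trgralg} applied to $\Scheck_0 U_0 \Scheck_0^* = p_w$ yields $h(p_w) = 1-q^2$ and hence $h(p_v) = q^2$; Equation~\eqref{eq:haar2} gives $h(\Shat_k\Shat_k^*) = q^{2k+2}$, so
\begin{equation*}
h(v^*v - vv^*) = q^2 - q^{2k+2} = q^2(1-q^{2k}).
\end{equation*}
For $v = \Scheck_k$, Equation~\eqref{eq:graphcalc1} gives $v^*v = p_w$, and Equation~\eqref{eq:haar1} gives $h(\Scheck_k\Scheck_k^*) = q^{2k}(1-q^2)$, so
\begin{equation*}
h(v^*v - vv^*) = (1-q^2) - q^{2k}(1-q^2) = (1-q^2)(1-q^{2k}).
\end{equation*}

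There is no real obstacle — the only thing to be careful about is the composition rule for rank-one endomorphisms when $v^*$ is applied on the right, and the identification $\Phi_0 = \Theta_{1,1}$ from \eqref{eq:decompPhim}. Everything else is a direct substitution into the formulas of Lemmas \ref{lem:graphcalc} and \ref{lem:trgralg}.
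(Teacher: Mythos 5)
Your proposal is correct and follows essentially the same route as the paper: both rest on the identification $\Phi_0=\Theta_{1,1}$ from Equation \eqref{eq:decompPhim}, the rank-one trace formula \eqref{eq:ttndrk1} giving $\tilde h(a\Phi_0)=h(a)$, and the evaluation of the Haar state on generators via Lemma \ref{lem:trgralg}. Your explicit verification that $v\Phi_0v^*=\Theta_{v,v}$ and $vv^*\Phi_0=\Theta_{vv^*,1}$ just fills in a step the paper leaves implicit, and the arithmetic on the generators checks out.
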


\begin{proof} This result follows because $\Phi_0=\Theta_{1,1}$, so
$ \tilde h(a\Phi_0)=\tilde h(\Theta_{a,1})=h(a),$
the last equality following from Equation \eqref{eq:ttndrk1}. The values of these
traces is computed using Lemma \ref{lem:trgralg}.
\end{proof}

\begin{lem} Modulo functions of $r$ holomorphic in a neighbourhood of
$r=1/2$, the difference
$$\eta_r(v\DD
v^*)-\eta_r(vv^*\DD)=
\int_1^\infty\tilde h((v^*v-vv^*)\DD(1+s\DD^2)^{-r})s^{-1/2}ds$$
is given by
$$
\eta_r(v\DD
v^*)-\eta_r(vv^*\DD)=C_r\times \left\{\begin{array}{ll}
k-q^2(1+q^2)\frac{1-q^{2k}}{1-q^2} & v=\Shat_k\\
-(1+q^2)(1-q^{2k}) & v=\Scheck_k\end{array}\right.$$
\end{lem}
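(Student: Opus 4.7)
The plan is to reduce everything to a sum over $m$ indexed by the spectral decomposition of $\DD$, then to isolate the polar part in $r$ at $r=1/2$ and show that all remaining integrals contribute only holomorphic functions of $r$.

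First, using Lemma \ref{lem:graphcalc} I would simplify $v^*v-vv^*$: for $v=\Shat_k$ it equals $p_v-\Shat_k\Shat_k^*$, while for $v=\Scheck_k$ it equals $p_w-\Scheck_k\Scheck_k^*$. Since each $\Phi_m$ is a spectral projection for $\DD$ with eigenvalue $m$ (Proposition \ref{prop:D}), I have, formally, $\DD(1+s\DD^2)^{-r}=\sum_{m\in\ZZ}m(1+sm^2)^{-r}\Phi_m$, so
\begin{equation*}
\tilde h\bigl((v^*v-vv^*)\DD(1+s\DD^2)^{-r}\bigr)=\sum_{m\in\ZZ}m(1+sm^2)^{-r}\,\tilde h\bigl((v^*v-vv^*)\Phi_m\bigr).
\end{equation*}
The computations in Lemma \ref{lem:dixcomp} already give $\tilde h(\Shat_k\Shat_k^*\Phi_m)$ and $\tilde h(\Scheck_k\Scheck_k^*\Phi_m)$, and specialising to $k=0$ (since $\Shat_0=p_v$ and $\Scheck_0=p_w$) gives $\tilde h(p_v\Phi_m)$ and $\tilde h(p_w\Phi_m)$. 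Subtracting, one finds that $\tilde h((v^*v-vv^*)\Phi_m)$ vanishes for $m\geq k+1$, contributes a finite sum for $1\leq m\leq k$, and behaves as a geometric series in $q^{2|m|}$ for $m\leq -1$; in particular absolute convergence is automatic and Fubini applies.

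Next I would handle the $s$-integral. For each $m\neq 0$, the substitution $u=sm^2$ gives
\begin{equation*}
m\int_1^\infty s^{-1/2}(1+sm^2)^{-r}\,ds=\mathrm{sgn}(m)\int_{m^2}^\infty u^{-1/2}(1+u)^{-r}\,du.
\end{equation*}
Writing this as $\mathrm{sgn}(m)\bigl(C_r-\int_0^{m^2}u^{-1/2}(1+u)^{-r}\,du\bigr)$ and noting that $\int_0^{m^2}u^{-1/2}(1+u)^{-r}\,du$ is entire in $r$ (the integrand is integrable at $u=0$ and the interval is bounded), the subtracted piece contributes only a holomorphic function of $r$. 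Thus modulo functions holomorphic at $r=1/2$,
\begin{equation*}
\eta_r(v\DD v^*)-\eta_r(vv^*\DD)\equiv C_r\Bigl(\sum_{m\geq 1}\tilde h((v^*v-vv^*)\Phi_m)-\sum_{m\leq -1}\tilde h((v^*v-vv^*)\Phi_m)\Bigr).
\end{equation*}

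Finally I would plug in the explicit values. For $v=\Scheck_k$ the positive-$m$ sum is $-(1-q^2)\sum_{j=0}^{k-1}q^{2j}=-(1-q^{2k})$ and the negative-$m$ sum is $(1-q^2)(1-q^{2k})\sum_{n\geq 1}q^{2n}=q^2(1-q^{2k})$, whose difference $-(1+q^2)(1-q^{2k})$ gives the stated answer. For $v=\Shat_k$ the positive-$m$ sum is $\sum_{m=1}^k(1-q^{2(k-m+1)})=k-q^2(1-q^{2k})/(1-q^2)$ and the negative-$m$ sum is $q^2(1-q^{2k})\sum_{n\geq 1}q^{2n}=q^4(1-q^{2k})/(1-q^2)$, whose difference simplifies to $k-q^2(1+q^2)(1-q^{2k})/(1-q^2)$, as claimed. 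The only real obstacle is careful bookkeeping in these geometric series and in the sign conventions coming from $\mathrm{sgn}(m)$; the analytic content (isolating the pole of $C_r$) is essentially free because the finite-interval integral is entire in $r$.
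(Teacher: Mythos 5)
Your proposal is correct and follows essentially the same route as the paper: decompose against the spectral projections $\Phi_m$, read off $\tilde h((v^*v-vv^*)\Phi_m)$ from the computations of Lemma \ref{lem:dixcomp} (your device of subtracting the $k=0$ case, using $\Shat_0=p_v$ and $\Scheck_0=p_w$, reproduces exactly the values the paper tabulates), reduce the $s$-integral to $\mathrm{sgn}(m)C_r$ modulo functions holomorphic at $r=1/2$ (your substitution $u=sm^2$ and split at $u=m^2$ is the same manipulation as the paper's extension of $\int_1^\infty$ to $\int_0^\infty$), and sum the geometric series, all of which check out against the stated answer. The one piece you leave unaddressed is the first displayed identity of the lemma, namely that $\eta_r(v\DD v^*)-\eta_r(vv^*\DD)$ equals the single integral of $\tilde h((v^*v-vv^*)\DD(1+s\DD^2)^{-r})s^{-1/2}$, which the paper obtains in one line from the functional calculus, the trace property of $\tilde h$, and the fact that $vv^*\in F$ commutes with $\DD$.
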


\begin{proof} The first equality comes from the functional calculus,
  the trace property and the fact that $vv^*$ commutes with $\DD$:
$$ \tilde h\left((v\DD v^*)(1+(v\DD v^*)^2)^{-r}\right) = \tilde
h\left(v\DD(1+\DD^2)^{-r}v^*\right) =\tilde
h\left(v^*v\DD(1+\DD^2)^{-r}\right), $$
and similarly for $vv^*\DD$. Using Lemma \ref{lem:dixcomp}, we find
that
$$\tilde h\left((\Shat_k^*\Shat_k-\Shat_k\Shat_k^*)\Phi_m\right)=
\left\{\begin{array}{ll} 0 & m>k\\ 1-q^{2(k-m+1)}&0<m\leq
    k\\q^{2(|m|+1)}(1-q^{2k})&m\leq 0\end{array}\right.,$$
$$ \tilde
h\left((\Scheck_k^*\Scheck_k-\Scheck_k\Scheck_k^*)\Phi_m\right) =
\left\{\begin{array}{ll} 0 &m>k\\-q^{2(k-m)}(1-q^2)&1\leq m\leq k
\\q^{2|m|}(1-q^2)(1-q^{2k})&m\leq 0\end{array}\right..$$
Since $q<1$, this means that $\sum_mm\tilde
h((v^*v-vv^*)\Phi_m)<\infty$ for $v=\Shat_k,\Scheck_k$. This allows
us to interchange the order of the summation
and integral in
$$\eta_r(v\DD v^*)-\eta_r(vv^*\DD)=\int_1^\infty 
\sum_{m\neq  0}m(1+sm^2)^{-r}s^{-1/2}
\tilde h\left((v^*v-vv^*)\Phi_m\right)ds.$$
Since 
$$r\to \int_0^1\sum_mm(1+sm^2)^{-r}\tilde
h((v^*v-vv^*)\Phi_m)s^{-1/2}ds$$ 
defines a
function of $r$ holomorphic at $r=1/2$, we have
\begin{align*}&\Res_{r=1/2} \int_1^\infty 
\sum_{m\neq  0}k(1+sm^2)^{-r}s^{-1/2}
\tilde h\left((v^*v-vv^*)\Phi_m\right)ds\\
&=\Res_{r=1/2}\int_0^\infty \sum_{m\neq  0}
m(1+sm^2)^{-r}s^{-1/2}
\tilde h\left((v^*v-vv^*)\Phi_m\right)ds.\end{align*}
Performing the interchange yields the integral
$\int_0^\infty(1+sm^2)^{-r}s^{-1/2}ds={C_r}/{|m|}.$
So
$$\Res_{r=1/2}(\eta_r(v\DD v)-\eta_r(vv^*\DD))
=\sum_{m\neq  0}\frac{m}{|m|}
\tilde h\left((v^*v-vv^*)\Phi_m\right).$$
The sums involved are just geometric series (with finitely many terms
for
$m>0$ and
infinitely many terms for $m<0$), which are easily summed to give the stated results.
\end{proof}

The last contribution to the index we require is from the integral
along the path. Using Section 6 of \cite{CPR2}, we
may show that modulo functions of $r$ holomorphic at $r=1/2$
$$\int_0^1\tilde
h\left(v[\DD,v^*](1+(\DD+tv[\DD,v^*])^2)^{-r}\right)dt= \tilde
h\left(v[\DD,v^*](1+\DD^2)^{-r}\right).$$
Using the formulae
$\Scheck_k[\DD,\Scheck_k^*]=-k\Scheck_k\Scheck_k^*,$
$\Shat_k[\DD,\Shat_k^*]=-k\Shat_k\Shat_k^*,
$
Lemma \ref{lem:dixcomp} gives us
\begin{equation}
\Res_{r=1/2}\tilde h(v[\DD,v^*](1+\DD^2)^{-r}) =
\frac{1}{2}
\Res_{r'=0}\tilde h(v[\DD,v^*](1+\DD^2)^{-1/2-r'})=
\begin{cases}
-k/2 & v=\Shat_k\\
0 & v = \Scheck_k
\end{cases}
\end{equation}

Putting the pieces together yields
$$sf(vv^*\DD,v\DD v^*)=\left\{\begin{array}{ll}
    -q^4\frac{1-q^{2k}}{1-q^2}&v=\Shat_k\\
    -q^2(1-q^{2k})&v=\Scheck_k\end{array}\right.=\left\{\begin{array}{ll}
    -q^4[k]_q&v=\Shat_k\\
    -q^2(1-q^2)[k]_q&v=\Scheck_k\end{array}\right.,$$
where $[k]_q$ is the $q$-integer given by the definition
$ [k]_q:=(1-q^{2k})/(1-q^2).$
These computations prove the following Proposition, which
shows that the analytic pairing of our
spectral triple with $K_0(M(F,A))$ is simply related to the $KK$-pairing.

\begin{prop}\label{pr:dont-work}
The trace $\tilde h$ induces a homomorphism on $K_0(F)$
  by choosing as representative of each class $x\in K_0(F)$ a
projection $Q\in  End^0_F(X)$, and defining $\tilde h_*(x):=\tilde h(Q)$.
Let $P$ be the non-negative spectral projection of $\DD$, as an
operator on $X$, and for $v$ a partial isometry in $A$ with range and
source in $F$, let ${\rm Index}(PvP)\in K_0(F)$ be the class obtained
from the $KK$ index pairing of $(X,\DD)$ and $v$.
Then
$$ \tilde h_*({\rm Index}(PvP))=sf(vv^*\DD,v\DD v^*).$$
\end{prop}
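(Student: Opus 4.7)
The plan is to verify the identity on a generating set of $K_0(M(F,A))$ and then extend by linearity, using only computations that have already been carried out in preceding lemmas. By Lemma \ref{lem:mapconeKth}, it suffices to check the equation for $v=\Shat_k$ and $v=\Scheck_k$, $k\geq 1$.

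First I would dispose of the well-definedness issue: $\tilde h_*$ makes sense on any class of $K_0(F)$ represented by a projection in the trace ideal $\Nvn_{\tilde h}$, and by Proposition \ref{prop:MapPairCalc} the classes ${\rm Index}(PvP)$ for the generators are finite integer combinations of $[\Shat_k\Shat_k^*\Phi_l]$ and $[\Scheck_k\Scheck_k^*\Phi_l]$ with $0\leq l\leq k-1$. Each of these $\Phi_l$ is a finite-rank endomorphism by the decomposition \eqref{eq:decompPhim}, so the relevant projections lie in $End_F^{00}(X_c)\subset\Nvn_{\tilde h}$ and $\tilde h_*$ evaluates them by the explicit values of $\tilde h$ on such operators.

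Next, I would simply apply $\tilde h$ to the formulas of Proposition \ref{prop:MapPairCalc} and invoke the explicit traces computed in the proof of Lemma \ref{lem:dixcomp}, namely $\tilde h(\Shat_k\Shat_k^*\Phi_l)=q^{2(k-l+1)}$ and $\tilde h(\Scheck_k\Scheck_k^*\Phi_l)=q^{2(k-l)}(1-q^2)$ for $0\leq l\leq k-1$. Summing the resulting finite geometric series and reindexing by $j=k-l$ gives
\begin{equation*}
\tilde h_*\bigl({\rm Index}(P\Shat_k P)\bigr)=-\sum_{l=0}^{k-1}q^{2(k-l+1)}=-q^4\,[k]_q,
\end{equation*}
\begin{equation*}
\tilde h_*\bigl({\rm Index}(P\Scheck_k P)\bigr)=-(1-q^2)\sum_{l=0}^{k-1}q^{2(k-l)}=-q^2(1-q^2)\,[k]_q.
\end{equation*}
These are precisely the values of $sf(vv^*\DD,v\DD v^*)$ obtained at the end of Section \ref{sec:Res} for $v=\Shat_k$ and $v=\Scheck_k$ respectively. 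Taking adjoints handles the negatives, and linearity propagates the identity to all of $K_0(M(F,A))$.

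The statement really is an observation rather than a separate theorem: the $K_0(F)$-valued index of Proposition \ref{prop:MapPairCalc} and the residue spectral flow of Proposition \ref{residuespecflow} have both been reduced to evaluating $\tilde h$ on the same spectral projections $\Phi_l$ cut down by $\Shat_k\Shat_k^*$ or $\Scheck_k\Scheck_k^*$. The only genuine obstacle is checking that what was called ${\rm Index}(PvP)$ in the $KK$ calculation (obtained as kernel minus cokernel of a Fredholm operator on the $C^*$-module $X$) and the analytic $sf(vv^*\DD,v\DD v^*)$ really correspond under the trace; this is guaranteed by the general principle that the index of a Fredholm operator on a $C^*$-module maps under a trace to the semifinite Fredholm index, together with the fact that $\tilde h$ is normal and agrees with the pairing used to compute spectral flow via Proposition \ref{residuespecflow}.
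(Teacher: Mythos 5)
Your proposal is correct and follows essentially the same route as the paper: verify the identity on the generators $\Shat_k$, $\Scheck_k$ of $K_0(M(F,A))$ by applying $\tilde h$ to the classes from Proposition \ref{prop:MapPairCalc}, substituting the values $\tilde h(\Shat_k\Shat_k^*\Phi_l)=q^{2(k-l+1)}$ and $\tilde h(\Scheck_k\Scheck_k^*\Phi_l)=q^{2(k-l)}(1-q^2)$ from Lemma \ref{lem:dixcomp}, summing the geometric series, and matching the results $-q^4[k]_q$ and $-q^2(1-q^2)[k]_q$ against the spectral flow values computed at the end of the preceding subsection. The paper's proof is exactly this direct numerical comparison (it relegates the conceptual "index maps to semifinite index under the trace" explanation to a remark citing \cite{KNR}), so your extra framing is consistent but not needed.
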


\begin{proof} We need to compute $\tilde h_*(\mbox{Index}(PvP))$ for the
  generating partial isometries of $K_0(M(F,A))$. By Proposition
  \ref{prop:MapPairCalc} and Lemma \ref{lem:dixcomp} we have
$$ \tilde h_*\langle [\Shat_k] , [(X,\DD)] \rangle =
  -\sum_{j=0}^{k-1} \tilde h(\Shat_k \Shat^*_{k}
  \Phi_j)=-\sum_{j=0}^{k-1}q^{2(k-j)}q^2= -q^4\frac{1-q^{2k}}{1-q^2};$$
  $$\tilde h_*\langle [\Scheck_k] , [(X,\DD)] \rangle =
  -\sum_{j=0}^{k-1} \tilde h(\Scheck_k \Scheck_{k}^*
\Phi_j)=-\sum_{j=0}^{k-1}q^{2(k-j)}(1-q^2)=-q^2(1-q^{2k});$$
Thus the values obtained from the spectral flow formula and the map
 $\tilde h_*$  agree.
\end{proof}

{\bf Remarks.} (i) This is a special case of a result in
\cite{KNR} (other special cases appeared in \cite{PR,PRS}).

(ii) The formulae for the pairing of $\Shat_k,\Scheck_k$ have three
factors: an overall $q^2$; then either $q^2$ or $(1-q^2)$, which is
$h(v^*v)$ for $v=\Shat_k,\Scheck_k$ respectively,
and the $q$-number $[k]_q$. We view this formula as giving a kind of
weighted $q$-winding number, though this is heuristic.

\section{The modular spectral triple for $SU_q(2)$}

Our aim in this Section is to construct a `modular spectral triple'
for $SU_q(2)$. The only real difference between the semifinite triple
constructed already and the modular triple, is the replacement of
$ T\to \tilde h(T)$ by $  T\to 
\tilde h(H T).$
This changes not just the analytic behaviour but also the homological
behaviour.
These modular triples do not pair with ordinary
$K$-theory, but with modular $K$-theory which we describe next.

\subsection{Modular $K$-theory}

The unitary $U_1+p_v$ generating $K_1(A)$ commutes with our
operator $\DD$, so there is no pairing between the Haar module and $K_1(A)$.
Nonetheless, we have many self-adjoint unitaries of the form
\begin{equation*}
  u_v := \left(
        \begin{array}{cc}
          1-v^*v & v^* \\
          v & 1-vv^* \\
        \end{array}
      \right),
\end{equation*}
where $v$ is  a partial isometry with range and source projection in
$F$. Whilst such unitaries are self-adjoint and so give the trivial
class in $K_1(A)$, we showed in \cite{CPR2} that they give rise to
nontrivial index pairings in twisted cyclic cohomology. We summarise
the key ideas from \cite{CPR2}.

\begin{defn} Let $A$ be a $*$-algebra and $\s:A\to A$ an algebra
automorphism such that $ \s(a)^*=\s^{-1}(a^*)$
then we say that $\s$ is a regular automorphism, \cite{KMT}.
\end{defn}


\begin{defn} Let $u$ be a unitary over $A$
(respectively matrix algebra over $\AAA$), and $\s:A\to A$ a regular
automorphism with fixed point algebra $F=A^\s$. We say that $u$
satisfies the {\bf modular condition} with respect to $\s$
if both the operators
$u\s(u^*)$ and $u^*\s(u)$
are in (resp. a matrix algebra over) the algebra $F$. We denote by
$U_\s$ the set of modular unitaries.
\end{defn}

{\bf Remarks.} (i) We are of course thinking of the case $\s(a)=H^{-1}
aH$, where $H$ implements the modular 
group for some weight on
$A$. Hence the terminology modular unitaries.\\
(ii) For unitaries in matrix algebras over $A$ we use
$\s\otimes Id_n$ to state the modular condition, where $Id_n$
is the identity of $M_n(\CC)$.

{\bf Example}. If $\s$ is a regular automorphism of an
algebra $A$ with fixed point algebra $F$, and $v\in A$ is a partial
isometry with range and source projections in $F$, and moreover has
$v\s(v^*),v^*\s(v)$ in $F$, then
$$u_v=\bma 1-v^*v & v^*\\ v & 1-vv^*\ema$$
is a modular unitary, and  $u_v\sim
u_{v^*}$. These statements are proved in \cite{CPR2}. 

The following definitions and results are also from \cite{CPR2}.

\begin{defn} Let $u_t$ be a continuous path of modular unitaries such
that $u_t\s(u_t^*)$ and $u^*_t\s(u_t)$ are also continuous paths in
$F$. Then we say that $u_t$ is a modular homotopy, and that $u_0$ and
$u_1$ are modular homotopic.
\end{defn}

\begin{lem} The binary operation on modular homotopy classes in
$U_\s$
$[u]+[v]:=[u\oplus v]$
is abelian.
\end{lem}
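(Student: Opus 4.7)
The plan is to prove this in two steps: first that the operation is well-defined on modular homotopy classes, and second that $[u \oplus v] = [v \oplus u]$ via the usual rotation homotopy from $K$-theory, adapted so that the modular condition is preserved throughout.

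For well-definedness, I would take modular homotopies $u_t$ from $u_0$ to $u_1$ and $v_t$ from $v_0$ to $v_1$, and observe that $u_t \oplus v_t$ is a continuous path of unitaries with
\begin{equation*}
(u_t\oplus v_t)\,(\s\otimes Id_2)((u_t\oplus v_t)^*) \;=\; u_t\s(u_t^*) \oplus v_t\s(v_t^*),
\end{equation*}
and similarly for $(u_t\oplus v_t)^*(\s\otimes Id_2)(u_t\oplus v_t)$. Since each summand lies in (a matrix algebra over) $F$ and depends continuously on $t$, so does the direct sum. Thus $[u_0 \oplus v_0] = [u_1 \oplus v_1]$, so the operation descends to modular homotopy classes.

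For commutativity, the key observation is that scalar matrices are fixed by $\s \otimes Id_n$. I would consider the path of scalar unitaries
\begin{equation*}
R_t \;=\; \begin{pmatrix} \cos(\pi t/2) & -\sin(\pi t/2) \\ \sin(\pi t/2) & \cos(\pi t/2) \end{pmatrix}, \qquad t \in [0,1],
\end{equation*}
(suitably enlarged by identity blocks when $u,v$ are themselves matrices over $A$), and define $w_t := R_t (u \oplus v) R_t^*$. A direct check gives $w_0 = u \oplus v$ and $w_1 = v \oplus u$. Because the entries of $R_t$ are complex scalars, $(\s \otimes Id_2)(R_t) = R_t$ and $(\s \otimes Id_2)(R_t^*) = R_t^*$, so
\begin{equation*}
w_t (\s\otimes Id_2)(w_t^*) \;=\; R_t \bigl( u\s(u^*) \oplus v\s(v^*) \bigr) R_t^*,
\end{equation*}
which lies in $M_2(F)$ and varies continuously in $t$; the analogous computation works for $w_t^*(\s\otimes Id_2)(w_t)$. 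Hence $w_t$ is a modular homotopy from $u \oplus v$ to $v \oplus u$, so $[u] + [v] = [v] + [u]$.

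The only potential obstacle is verifying that every ingredient in the rotation trick is compatible with the modular structure, but this is handled uniformly by the fact that $R_t$ has scalar entries and so is a fixed point of $\s \otimes Id_2$; no subtler interaction between the rotation and the twist $\s$ arises. This is exactly the commutativity argument for ordinary $K_1$, transferred to the modular setting without essential modification.
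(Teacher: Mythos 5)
Your proof is correct: the rotation-matrix homotopy $w_t=R_t(u\oplus v)R_t^*$ preserves the modular condition precisely because $R_t$ has scalar entries and is therefore fixed by $\s\otimes Id_2$, so $w_t(\s\otimes Id_2)(w_t^*)=R_t\bigl(u\s(u^*)\oplus v\s(v^*)\bigr)R_t^*$ stays in $M_2(F)$, and this is essentially the argument the paper relies on (it defers the proof to \cite{CPR2}, where the same scalar-rotation trick is used). Your additional check that $\oplus$ descends to modular homotopy classes is a sensible inclusion and is likewise correct.
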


\begin{defn} Let $\s$ be a regular automorphism of the $*$-algebra
  $A$. Define $K_1(A,\s)$ to be the abelian semigroup with one
generator $[u]$ for each unitary $u\in M_l(A)$ satisfying
the modular condition
and with the following relations:
\bean 1)&& [1]=0,\nno 2)&&
[u]+[v]=[u\oplus v],\nno 3)&& \mbox{If }u_t,\ t\in[0,1]\ \mbox{is a
continuous paths of unitaries in }M_l(A)\nno && \mbox{satisfying
the modular condition then}\ [u_0]=[u_1].\eean
\end{defn}

The modular $K_1$ group does not pair with ordinary $K$-homology, or
$KK$-theory. 

On the GNS Hilbert space of the Haar state $\tau$ we are going to construct
a semifinite Neumann algebra $\cn$ 
with
the $*$-algebra $\AAA$ faithfully represented in $\cn$ and having the following properties:

1) there is a faithful normal semifinite weight $\phi$
on $\cn$ such that the modular automorphism group of $\phi$ is an
inner automorphism group $\tilde\sigma$ of $\cn$ with
 $\tilde\sigma|_\AAA=\s$,

2) $\phi$ restricts to a faithful  semifinite trace on  $\cM=\cn^\s$,

3) If $\DD$ is the generator of the one parameter group 
which implements the modular automorphism group of $(\cn, \phi)$
then $[\DD,a]$ extends to a bounded operator (in $\cn$) for all $a\in\AAA$
and for $\lambda$ in the resolvent set of $\DD$ we have
$f(\lambda-\DD)^{-1} \in K(\cM,\phi|_\cM)$, where  $f\in\AAA^\s$, and
$K(\cM,\phi|_\cM)$ is the ideal of compact operators in $\cM$ relative to
$\phi|_\cM$. In particular, $\DD$ is affiliated to $\cM$.


For ease of reference we will
follow the practice of \cite{CPR2} and refer to any triple  $(\AAA,\HH,\DD)$ carrying the extra
data (1), (2), (3) above as a modular spectral triple.
For modular spectral triples there is also
a residue type formula for the spectral flow, which is 
a Corollary of Proposition \ref{residuespecflow}.

\begin{thm}Let
$(\AAA,\HH,\DD)$ be a $QC^\infty$, $(1,\infty)$-summable,
modular spectral triple relative to $(\Nvn,\phi)$,
such that $\DD$ commutes with $F=\AAA^\sigma$. Then for any modular
unitary $u$
and any
Dixmier trace $\phi_\omega$ associated to $\phi$ (restricted to $\Mvn$)
we have $sf_\phi(\DD,u\DD u^*)$ given by the residue at $r=1/2$ of the analytic continuation of
\begin{align*}
\phi\left(u[\DD,u^*](1+\DD^2)^{-r}\right)
\!+\!\frac{1}{2}\int_1^\infty\!\phi\left((\s(u^*)u-1)\DD(1+s\DD^2)^{-r}\right)s^{-1/2}ds.
\end{align*}
\end{thm}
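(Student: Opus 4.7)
The plan is to apply Proposition~\ref{residuespecflow} to the linear path $\DD_t := \DD + t\,u[\DD,u^*] = (1-t)\DD + t(u\DD u^*)$, $t\in[0,1]$, joining $\DD$ to $u\DD u^*$, and then to simplify each of the three resulting contributions (path integral, eta difference, kernel defect) using the modular condition $u\s(u^*),\,u^*\s(u)\in F$ together with the hypothesis that $\DD$ commutes with $F$. Since $u\in U_\s$, the perturbation $u[\DD,u^*] = u\DD u^* - \DD$ is a bounded self-adjoint element of $\cM$, so the path lies in $\cM_0=\DD+\cM_{sa}$ and the hypotheses of Proposition~\ref{residuespecflow} are in force.

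For the path integral $\int_0^1\phi\bigl(u[\DD,u^*](1+\DD_t^2)^{-r}\bigr)\,dt$, I would expand $(1+\DD_t^2)^{-r}$ as a Neumann-type series in $t$ around $t=0$, using the pseudodifferential calculus of \cite{CPS2,CPR2}. Each higher-order term involves iterated commutators of $\DD$ with $u[\DD,u^*]$; these gain summability and thus produce functions of $r$ that are holomorphic in a neighbourhood of the critical point $r=1/2$. Consequently, modulo such terms, the integral collapses to its value at $t=0$, namely $\phi(u[\DD,u^*](1+\DD^2)^{-r})$, which is the first summand in the statement.

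For the eta difference, unitarity and the bounded functional calculus give $(1+s(u\DD u^*)^2)^{-r}=u(1+s\DD^2)^{-r}u^*$, so $\phi(u\DD u^*(1+s(u\DD u^*)^2)^{-r})=\phi(u\DD(1+s\DD^2)^{-r}u^*)$. Applying the KMS/modular identity $\phi(ab)=\phi(b\s(a))$, together with the fact that $\s$ (being implemented by a function of $\DD$) fixes $\DD$ and all its resolvents, moves a copy of $u^*$ through the trace to yield $\phi(\s(u^*)u\cdot\DD(1+s\DD^2)^{-r})$; the modular condition $\s(u^*)u\in F$ is precisely what legitimises this rearrangement, since $\phi|_\cM$ is genuinely tracial on $F\subset\cM$. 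Subtracting $\phi(\DD(1+s\DD^2)^{-r})$, multiplying by $\tfrac12 s^{-1/2}$, and integrating from $s=1$ to $\infty$ then produces the second summand of the formula. The kernel defect $\tfrac12(\phi(P_{\ker u\DD u^*})-\phi(P_{\ker\DD}))$ is handled in the same way: $\ker(u\DD u^*)=u\ker\DD$ and $P_{\ker\DD}\in F$ (since $\DD$ commutes with $F$), so the same KMS step gives $\phi(P_{\ker u\DD u^*})=\phi(\s(u^*)u\,P_{\ker\DD})$, and this boundary contribution is absorbed into the residue of the eta integral.

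The main obstacle is making rigorous the first step: establishing that only the $t=0$ value of the path integrand contributes to the residue at $r=1/2$. This demands uniform $\LL^1(\cN,\phi)$-control, along $t\in[0,1]$, of each term in the Neumann expansion of $(1+\DD_t^2)^{-r}$, in the spirit of the pseudodifferential estimates of \cite{CPS2}. This control in turn relies on $u$ sitting inside a smooth modular subalgebra of $\cN$, so that iterated applications of $\delta=\mathrm{ad}(|\DD|)$ to $u[\DD,u^*]$ remain bounded operators in $\cM$; verifying this smoothness in the ambient modular setting, and checking that the twisted trace $\phi$ interacts correctly with the pseudodifferential symbol calculus, is the delicate technical core of the argument.
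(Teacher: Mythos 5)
Your overall strategy coincides with the paper's own (very terse) proof: apply Proposition \ref{residuespecflow} to the linear path $\DD_t=\DD+tu[\DD,u^*]$, invoke the methods of Section 6 of \cite{CPS2}/\cite{CPR2} to reduce the path integral to its $t=0$ value modulo functions of $r$ holomorphic at $r=1/2$, and produce the eta term by pushing $u^*$ around the twisted trace and using that $\phi|_{\cM}$ is tracial against $F$. Your identification of the delicate point (uniform control of the expansion of $(1+\DD_t^2)^{-r}$ in $t$) is exactly where the paper leans on its references, and your observation that $u[\DD,u^*]=u\DD u^*-\DD$ lies in $\cM_{sa}$ (which does need the modular condition: $\s(u\DD u^*)=\s(u)\DD\s(u^*)=u\,(u^*\s(u))(\s(u^*)u)\,\DD u^*=u\DD u^*$) is the right justification for invoking that proposition relative to $(\cM,\phi|_\cM)$.

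There is, however, one step that would fail as written: the kernel correction. The formula in the theorem contains no kernel term, so you must show that $\tfrac12\bigl(\phi(P_{\ker u\DD u^*})-\phi(P_{\ker\DD})\bigr)=\tfrac12\phi\bigl((\s(u^*)u-1)P_{\ker\DD}\bigr)$ \emph{vanishes}; it cannot be ``absorbed into the residue of the eta integral'', because the eta integrand $\DD(1+s\DD^2)^{-r}$ annihilates $\ker\DD$ and therefore never sees $P_{\ker\DD}$. The paper explicitly defers this cancellation ("we will prove later") and settles it in the proof of the final theorem via the identity $h_\DD(fP_{\ker\DD})=h(f)$ for $f\in F$ together with the KMS property of the Haar state, which gives $h(\s(u^*)u-1)=h(uu^*)-h(1)=0$; your write-up needs this argument rather than the absorption claim. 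A smaller bookkeeping point: with the paper's conventions $\phi=\tilde h(H\,\cdot\,)$ and $\s(a)=H^{-1}aH$, cyclicity of $\tilde h$ gives $\phi(ab)=\phi(\s(b)a)$, not $\phi(ab)=\phi(b\s(a))$; applied literally, your version of the identity would place $u^*\s(u)$ rather than $\s(u^*)u$ in the eta integrand. The final formula you state is the correct one, so this is a slip in the intermediate identity rather than in the conclusion, but it should be fixed for the computation of the explicit pairings to come out right.
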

\begin{proof} The cancellation of the kernel corrections we will prove later.
The methods of Section 6 of \cite{CPS2} imply the formula
for the first term and some elementary 
algebraic manipulations produce the eta term.
\end{proof}
{\bf Remarks}. The  two functionals
$\AAA\otimes\AAA\ni a_0\otimes a_1\to
\phi(a_0[\DD,a_1](1+\DD^2)^{-r}),$ and
$$\AAA\otimes\AAA\ni a_0\otimes a_1\to
\frac{1}{2}\int_1^\infty\!\phi\left((\s(u^*)u-1)\DD(1+s\DD^2)^{-r}\right)
s^{-1/2}ds,$$
are easily seen to algebraically satisfy the relations for
twisted $b,B$-cocycles 
with twisting coming 
from $\s$. This is basically the situation 
 found in \cite{CPR2} for the Cuntz algebras. 
 However here the presence of the eta 
correction term complicates the analytic aspects of the cocycle condition
and so we will defer
this analysis to a future work.
 We will also  defer discussion of the question of dependence of spectral flow on the homotopy class of a 
modular unitary to another place; the proof needs
considerable additional information about modular unitaries. 
 

\subsection{The modular spectral triple and the index pairing}

Our final aim  is to construct a modular spectral
triple for $A=SU_q(2)$, and compute the twisted index pairing. In the
following, $\Nvn$ refers to the von Neumann algebra of Lemma
\ref{lem:optrdef} acting on the GNS Hilbert space for the Haar
state of  $SU_q(2)$. We let $\tilde h$ be the faithful, normal,
semifinite trace defined on $\Nvn$ in Lemma \ref{lem:optrdef}.

\begin{defn}
Let $h_\DD$ be the trace $\tilde h$ twisted by the modular operator, that
is
\begin{equation}\label{eq:ttddef}
    h_\DD(T) := \tilde h(H T)
\end{equation}
for all operators $T$ such that $H T \in \Nvn_{\tilde h}$.
\end{defn}

The functional $h_\DD$ is no longer a trace on $\Nvn$, but is a
faithful, normal, semifinite weight. The restriction of $h_\DD$ to
$\Mvn:=\Nvn^\s$ is a faithful, normal, semifinite trace.

\begin{lem}
  The operators $\Phi_m$ are $h_\DD$-compact in the
  von-Neumann algebra $\Mvn$, and
\begin{eqnarray*}
  \Res_{r=1/2}h_\DD(\Scheck_k \Scheck_k^* (1+\DD^2)^{-r}) &=&
\frac{1}{2}q^{2k}(1-q^2)=\frac{1}{2}h(\Scheck_k \Scheck_k^*)\\
  \Res_{r=1/2}h_\DD(\Shat_k \Shat_k^* (1+\DD^2)^{-r}) &=& \frac{1}{2}q^{2k +
    2}=\frac{1}{2}h(\Shat_k \Shat_k^*).
\end{eqnarray*}
In particular, $(1+\DD^2)^{-1/2}\in \LL^{(1,\infty)}(\Mvn,h_\DD)$.
\end{lem}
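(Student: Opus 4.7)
The key input is Equation \eqref{eq:PhiDeltarel}, $H\Phi_m = q^{2m}\Phi_m$, which immediately places $\Phi_m$ in the fixed-point algebra $\Mvn = \Nvn^\s$ and, more importantly, yields the identity $h_\DD(T\Phi_m) = q^{2m}\tilde h(T\Phi_m)$ whenever $T$ has grading $(0,0)$, and hence commutes with $H$. All three assertions of the lemma will reduce, via this identity together with the spectral decomposition $(1+\DD^2)^{-r} = \sum_{m\in\ZZ} (1+m^2)^{-r}\Phi_m$ from Proposition \ref{prop:D} and Lemma \ref{lem:Phistuff}, to the $\tilde h$-computations already carried out in the proof of Lemma \ref{lem:dixcomp}.

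For $h_\DD$-compactness of $\Phi_m$, I would combine the rank-one decomposition \eqref{eq:decompPhim} with the trace formula $\tilde h(\Theta_{x,y}) = h(y^*x)$ to obtain $\tilde h(\Phi_m) = h(\Shat_m^*\Shat_m) + h(\Scheck_m^*\Scheck_m) = 1$ for $m \geq 1$, $\tilde h(\Phi_0) = 1$, and $\tilde h(\Phi_m) = h(\Shat_{|m|}\Shat_{|m|}^*) + h(\Scheck_{|m|}\Scheck_{|m|}^*) = q^{2|m|}$ for $m \leq -1$. Multiplying by $q^{2m}$ then gives $h_\DD(\Phi_m) = q^{2m}$ for $m \geq 0$ and $h_\DD(\Phi_m) = 1$ for $m \leq -1$, both finite; hence each $\Phi_m$ is $h_\DD$-trace class in $\Mvn$ and in particular $h_\DD$-compact.

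For the residues, substituting the formulas of Lemma \ref{lem:dixcomp} into $h_\DD(\Scheck_k\Scheck_k^*\Phi_m) = q^{2m}\tilde h(\Scheck_k\Scheck_k^*\Phi_m)$ produces a striking cancellation: the factor $q^{2m}$ exactly cancels the $q^{-2m}$ for $0 \leq m \leq k$ and the $q^{-2|m|}$ for $m \leq -1$, leaving the constant value $q^{2k}(1-q^2)$ for every $m \leq k$ and zero for $m \geq k+1$. Summing against $(1+m^2)^{-r}$ yields
\ben
h_\DD(\Scheck_k\Scheck_k^*(1+\DD^2)^{-r}) = q^{2k}(1-q^2) \sum_{m \leq k}(1+m^2)^{-r},
\een
and the truncated sum $\sum_{m \leq k}(1+m^2)^{-r}$ has a simple pole at $r = 1/2$ with residue $1/2$, by comparison with $\int_{-\infty}^k (1+x^2)^{-r}\,dx = \tfrac{1}{2}C_r + \text{(holomorphic)}$. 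The $\Shat_k\Shat_k^*$ case runs in parallel: for $m \leq k$ the same cancellation gives the constant $q^{2k+2}$, while for $m \geq k+1$ the twist converts the $\tilde h$-value $1$ into $q^{2m}$, producing a tail $\sum_{m \geq k+1}q^{2m}(1+m^2)^{-r}$ which is holomorphic at $r=1/2$ (absolutely convergent since $q < 1$) and so contributes nothing to the residue.

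Finally, $(1+\DD^2)^{-1/2} \in \LL^{(1,\infty)}(\Mvn, h_\DD)$ follows from a direct estimate of the generalized singular values: the spectral projection of $(1+\DD^2)^{-1/2}$ above the level $(1+N^2)^{-1/2}$ is $\sum_{|m|<N}\Phi_m$, with $h_\DD$-measure $\sum_{m=0}^{N-1}q^{2m} + (N-1) = O(N)$, so $\mu_t = O(1/t)$ and $\int_0^t\mu_s\,ds = O(\log(1+t))$. The only real obstacle is bookkeeping — verifying that the multiplicative twist $q^{2m}$ in $h_\DD$ is precisely calibrated to cancel the $m$-dependence of the pre-computed $\tilde h$-values — and once this cancellation is spotted, everything else is a routine comparison of a sum with an integral.
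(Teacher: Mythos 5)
Your proposal is correct and follows essentially the same route as the paper: compute $\tilde h(\Phi_m)$ and $\tilde h(vv^*\Phi_m)$ from the rank-one decomposition \eqref{eq:decompPhim} together with \eqref{eq:ttndrk1} and the Haar state values, twist by $q^{2m}$ via \eqref{eq:PhiDeltarel} (the same cancellation making $h_\DD(vv^*\Phi_m)$ constant for $m\leq k$ appears in the paper's proof), and extract the residue from the resulting half-sum of $(1+m^2)^{-r}$. The only cosmetic difference is at the end, where you establish $(1,\infty)$-summability by a direct singular-value estimate while the paper reads it off from the simple pole of the zeta function; both are valid.
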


{\bf Remark.} Both $\tilde h$ and $h_\DD$ give us $(1,\infty)$
summability, but only $h_\DD$ recovers the Haar state using
the Dixmier trace. In particular, the functional
$a\to\Res_{r=1/2}h_\DD(a(1+\DD^2)^{-r})$ is faithful on $A$.

\begin{proof}
We will prove compactness by showing that the $\Phi_m$ are finite for
$h_\DD$ using the formulae of Lemma \ref{lem:Phicpct}.
We can apply the trace $\tilde h$ to both
sides of Equation \eqref{eq:decompPhim} to obtain
\begin{equation*}
  \tilde h(\Phi_m) =
    \begin{cases}
      \tilde h(\Theta_{\Shat_m,\Shat_m}) + \tilde
h(\Theta_{\Scheck_m,\Scheck_m}),
& m
      \geq 1,\\
      \tilde h(\Theta_{1,1}), & m=0,\\
      \tilde h(\Theta_{\Shat_m^*,\Shat_m^*}) +
      \tilde h(\Theta_{\Scheck_m^*,\Scheck_m^*}), & m \leq -1. \
    \end{cases}
\end{equation*}
Equation \eqref{eq:ttndrk1} then gives
\begin{equation}
  \tilde h(\Phi_m) =
  \begin{cases}
    h(\Shat_m^*\Shat_m) + h(\Scheck_m^*\Scheck_m) & m \geq 1 \\
    h(1) & m = 0 \\
    h(\Shat_m\Shat_m^*) + h(\Scheck_m\Scheck_m^*) & m \leq -1 \
  \end{cases}.
\end{equation}
The relation \eqref{eq:graphcalc0} implies $\Shat_m^*\Shat_m = p_v$
and $\Scheck_m^*\Scheck=p_w$.  On the other hand we have formulae for
the trace on these and the other elements given above, in Equations
\eqref{eq:haar1} and \eqref{eq:haar2}.  Substituting these values we obtain
\begin{equation}
  \label{eq:haarPhim}
  \tilde h(\Phi_m) = q^{\max(0,-2m)},\qquad m\in\ZZ.
\end{equation}
So
by Equation \eqref{eq:PhiDeltarel} we have
\begin{equation*}
  h_\DD(\Phi_m) = \tilde h(H \Phi_m) = q^{2m} \tilde h(\Phi_m)
  = q^{2m +\max(0,-2m)} = q^{\max(0,2m)},\qquad m\in\ZZ.
\end{equation*}

Since  $a \Theta_{x,y}=\Theta_{ax,y}$ for $a\in\AAA$,
we can multiply both sides of
Equation \eqref{eq:decompPhim} by $\Shat_k \Shat_k^*$ to
obtain
\begin{equation}\label{eq:ProjITORkOne}
        \Shat_k \Shat_k^* \Phi_m =
    \begin{cases}
      \Theta_{\Shat_k \Shat_k^* \Shat_m,\Shat_m} +
      \Theta_{\Shat_k \Shat_k^* \Scheck_m,\Scheck_m}, & m
      \geq 1,\\
      \Theta_{\Shat_k \Shat_k^*,1}, & m=0,\\
      \Theta_{\Shat_k \Shat_k^* \Shat_m^*,\Shat_m^*} +
      \Theta_{\Shat_k \Shat_k^* \Scheck_m^*,\Scheck_m^*}, & m \leq -1. \
    \end{cases}
\end{equation}
Taking the trace of both sides and again applying Equation
\eqref{eq:ttndrk1}, we obtain
\begin{equation*}
        \tilde h(\Shat_k \Shat_k^* \Phi_m) =
    \begin{cases}
      h(\Shat_m^* \Shat_k \Shat_k^* \Shat_m) +
      h(\Scheck_m^* \Shat_k \Shat_k^* \Scheck_m),& m
      \geq 1,\\
      h(\Shat_k \Shat_k^*), & m=0,\\
      h(\Shat_m\Shat_k \Shat_k^* \Shat_m^*) +
      h(\Scheck_m\Shat_k \Shat_k^* \Scheck_m^*), & m \leq -1. \
    \end{cases}
\end{equation*}
Now we apply Lemma \ref{lem:dixcomp} to find
\begin{equation*}
        \tilde h(\Shat_k \Shat_k^* \Phi_m) =
    \begin{cases}
     1 & m\geq
    k+1,\\
       q^{2(k-m+1)}&0\leq
    m\leq k,\\
      q^{2(k+|m|+1)}&m\leq-1.
    \end{cases}
\end{equation*}
Since $\Shat_k \Shat_k^*$ commutes with $H$, we find
\begin{equation*}
         h_\DD(\Shat_k \Shat_k^* \Phi_m) =
    \begin{cases}
     q^{2m} & m\geq
    k+1,\\
       q^{2(k+1)}&0\leq
    m\leq k,\\
      q^{2(k+1)}&m\leq-1.
    \end{cases}=\begin{cases}q^{2m} & m\geq k+1\\
q^{2(k+1)}&m\leq k\end{cases}.
\end{equation*}

{}From this the summability of $\DD$ is  computed as follows;
\begin{equation}
  \label{eq:Dsum}
  h_\DD((1+\DD^2)^{-s/2}) = \sum_{m \in \ZZ} q^{\max(2m,0)}(1+m^2)^{-s/2}
=
  \sum_{m=0}^\infty (1+m^2)^{-s/2} + C(s),
\end{equation}
where $C(s)$ is finite for all $s\geq 1$.  Thus the whole sum is finite for
$\Re(s) > 1$ and has  a simple pole
at $s=1$.
Similarly we have
\begin{equation}
  \label{eq:Dsum2}
  h_\DD(\Shat_k \Shat_k^* (1+\DD^2)^{-s/2}) = q^{2k+2}
  \sum_{m=k}^\infty (1+m^2)^{-s/2} + C_k(s),
\end{equation}
Putting $r=s/2$ and taking the residues of both sides, we obtain
\begin{eqnarray} \label{eq:Res1}
  \Res_{r=1/2}h_\DD( (1+\DD^2)^{-r}) =1/2, \ \ \ 
  \Res_{r=1/2}h_\DD(\Shat_k \Shat_k^* (1+\DD^2)^{-r}) = q^{2k + 2}/2
\end{eqnarray}
A similar calculation yields
$$h_\DD(\Scheck_k\Scheck_k^*\Phi_m)=\begin{cases}
  0 & m\geq k+1\\ q^{2k}(1-q^2)& m\leq k
\end{cases},
$$
and so
$\Res_{r=1/2}h_\DD(\Scheck_k \Scheck_k^* (1+\DD^2)^{-r}) =
q^{2k}(1-q^2)/2.$
\end{proof}

\begin{thm} The triple $(\AAA,\HH,\DD)$ carries the additional structure of  a 
 modular spectral triple. 
 The index pairings with the modular unitaries
  $u_{\Shat_k}$ and $u_{\Scheck_k}$ are given by
$$\langle [u_{\Shat_k}],(\AAA,\HH,\DD)\rangle=kq^2(1-q^{2k}),\ \ \ \ 
\langle [u_{\Scheck_k}],(\AAA,\HH,\DD)\rangle
=k(1-q^2)(1-q^{2k}).$$
\end{thm}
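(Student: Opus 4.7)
The plan is to verify the modular spectral triple axioms for $(\AAA, \HH, \DD)$ equipped with the weight $h_\DD$, and then to apply the residue spectral flow formula from the preceding theorem to each modular unitary $u_v$ for $v = \Shat_k, \Scheck_k$. The modular axioms will follow readily from the preceding lemmas: $h_\DD$ is faithful, normal, and semifinite; its modular automorphism group is implemented by the positive operator $H$ and, by Lemma \ref{lem:stuff}, restricts to the regular automorphism $\s$ of $\AAA$; the operator $H$ commutes with the fixed-point algebra $\Mvn = \Nvn^\s$, so $h_\DD|_\Mvn$ is a faithful normal semifinite trace; boundedness of $[\DD, a]$ is inherited from the semifinite triple, and $(1+\DD^2)^{-1/2} \in \LL^{(1,\infty)}(\Mvn, h_\DD)$ was shown in the previous lemma. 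That $u_{\Shat_k}$ and $u_{\Scheck_k}$ are modular unitaries follows because $v\s(v^*)$ and $v^*\s(v)$ are scalar multiples of $vv^*, v^*v \in F$.

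The residue formula expresses $sf(\DD, u_v \DD u_v)$ (using $u_v^* = u_v$) as $\Res_{r=1/2}$ of a local term $(h_\DD\otimes\operatorname{Tr})(u_v[\DD, u_v](1+\DD^2)^{-r})$ plus an eta term $\tfrac{1}{2}\int_1^\infty (h_\DD\otimes\operatorname{Tr})((\s(u_v)u_v - 1)\DD(1+s\DD^2)^{-r})s^{-1/2}ds$. Direct matrix multiplication, combined with the identities $v[\DD, v^*] = -k vv^*$ and $v^*[\DD, v] = k v^*v$, reduces the local term to a residue of $k\, h_\DD((v^*v - vv^*)(1+\DD^2)^{-r})$; substituting the residue values computed in the previous lemma gives exactly one half of each claimed answer.

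For the eta term, computing $\s(u_v)u_v - 1$ using $\s(\Shat_k) = q^{-2k}\Shat_k$ and $\s(\Scheck_k) = q^{-2k}\Scheck_k$ will produce a purely diagonal $2\times 2$ matrix, the off-diagonal entries vanishing by the range/source identities $p_v\Shat_k^* = \Shat_k^*$ and $p_w\Scheck_k^* = \Scheck_k^*$. Expanding in the spectral projections $\Phi_m$ and invoking the Laplace identity $\int_1^\infty (1+sm^2)^{-r}s^{-1/2}ds \equiv C_r/|m|$ modulo functions holomorphic at $r=1/2$, the eta term reduces to $\tfrac{C_r}{2}\sum_{m \neq 0}\tfrac{m}{|m|}(h_\DD\otimes\operatorname{Tr})((\s(u_v)u_v - 1)\Phi_m)$. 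Inserting the values of $h_\DD(v^*v\,\Phi_m)$ and $h_\DD(vv^*\Phi_m)$ from the previous lemma, I expect the two diagonal contributions to cancel identically for $m \leq 0$ and for $m \geq k+1$, leaving only a finite sum over $1 \leq m \leq k$; using $\Res_{r=1/2}C_r = 1$ this should evaluate to the remaining half of each claimed value, and adding the two pieces completes the calculation.

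The main obstacle will be the cancellation at $m \geq k+1$ in the eta term, which requires the precise modular twist by $H$ (and hence the use of $h_\DD$ rather than $\tilde h$) to take effect; the same decay estimates on $h_\DD((\s(u_v)u_v - 1)\Phi_m)$ will justify the interchange of sum and integral in a half-plane containing $r = 1/2$. The kernel-correction terms appearing in the general spectral flow formula (deferred in the proof of the residue formula above) also need to be handled; this should follow from the observation that $u_v$ conjugates $\ker(\DD \oplus \DD)$ onto $\ker(u_v(\DD\oplus\DD)u_v)$, so the two kernel projections carry equal $h_\DD$-trace and the correction cancels.
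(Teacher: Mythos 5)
Your overall route is the same as the paper's: check the modular axioms from the preceding constructions, split the residue spectral flow formula into a local term and an eta term, compute the local term from $u_v[\DD,u_v]=\operatorname{diag}(kv^*v,\,-kvv^*)$, and evaluate the eta term by expanding in the projections $\Phi_m$ and using $\int_1^\infty(1+sm^2)^{-r}s^{-1/2}ds\equiv C_r/|m|$ modulo holomorphic functions. The local-term half of the answer and your treatment of the $\Scheck_k$ case are correct.

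However, the cancellation you predict in the eta term fails for $v=\Shat_k$, and following it would give the wrong number. Here $v^*v=p_v$ and $vv^*=\Shat_k\Shat_k^*$, and for $m\geq k+1$ one has $h_\DD(\Shat_k\Shat_k^*\Phi_m)=h_\DD(p_v\Phi_m)=q^{2m}$, so the two diagonal entries of $\s(u_v)u_v-1$ contribute $q^{2m}\bigl((q^{-2k}-1)+(q^{2k}-1)\bigr)=q^{2m}(q^{2k}+q^{-2k}-2)$, which is strictly positive, not zero. Moreover the surviving sum over $1\leq m\leq k$ is not constant in $m$: each term equals $q^2(1-q^{2k})+q^{2m}(q^{2k}-1)$. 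The infinite tail sums to $q^2(1-q^{2k})^2/(1-q^2)$ and exactly cancels the $m$-dependent geometric part of the finite sum, which is how the clean value $kq^2(1-q^{2k})$ emerges; if you discard the tail as proposed you obtain $kq^2(1-q^{2k})-q^2(1-q^{2k})^2/(1-q^2)$ instead of the stated pairing. (For $v=\Scheck_k$ both $h_\DD(\Scheck_k\Scheck_k^*\Phi_m)$ and $h_\DD(p_w\Phi_m)$ do vanish for $m\geq k+1$, so your shortcut happens to be valid there.) A smaller issue: your kernel-correction argument invokes invariance of the trace under conjugation by $u_v$, but $h_\DD$ is only a trace on $\Mvn=\Nvn^\s$, not on all of $\Nvn$; the paper instead uses $H\Phi_0=\Phi_0$ to get $h_\DD(fP_{\ker\DD})=h(f)$ for $f\in F$ and then the twisted-trace identity $h(\s(u^*)u-1)=0$.
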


\begin{proof} That $(\AAA,\HH,\DD)$ is a modular spectral triple is a
  consequence of our constructions. We are interested in computing the
  index pairing. Since for $f\in F$ we have
$ h_\DD(fP_{\ker\DD})=\tilde h(fP_{\ker\DD})=h(f),$
for any modular unitary we have
$ h_\DD((\s(u^*)u-1)P_{\ker\DD})=h(\s(u^*)u-1)=0.$
For the eta corrections we first compute, with $v=\Shat_k,\Scheck_k$,
$$ \s(u_v)u_v-1=\bma \s(v)v^*-vv^* & 0\\ 0 & \s(v^*)v-v^*v\ema,$$
and so
$$h_\DD((\s(u_v)u_v-1)\Phi_m)=q^{2m}\tilde
h(((q^{-2k}-1)vv^*+(q^{2k}-1)v^*v)\Phi_m).$$
Using our previous computations we find
$$h_\DD((\s(u_{\Shat_k})u_{\Shat_k}-1)\Phi_m)=
\left\{\begin{array}{ll} q^{2m}(q^{2k}+q^{-2k}-2)&m\geq k+1\\
    q^2(1-q^{2k}) +q^{2m}(q^{2k}-1)&1\leq m\leq k\\
0&m\leq 0\end{array}\right.$$
and
$$h_\DD
((\s(u_{\Scheck_k})
u_{\Scheck_k}-1)\Phi_m)
=
\left\{\begin{array}{ll}0&m>k\\(1-q^2)(1-q^{2k})&1\leq m\leq k\\
    (1-q^2)(1+(1-q^{2k}))&m=0\\0&m\leq-1\end{array}\right..$$
Just as in the semifinite case we may replace the integral over
$[1,\infty)$ by an integral over $[0,\infty)$ without affecting the
residue, and interchange the sum and integral. Proceeding just as in
that case we have modulo functions of $r$ holomorphic at $r=1/2$,
$$\int_1^\infty
h_\DD((\s(u_v)u_v-1)\DD(1+s\DD^2)^{-r})s^{-1/2}ds =
C_r\left\{\begin{array}{ll} kq^2(1-q^{2k}) & v=\Shat_k\\
    k(1-q^2)(1-q^{2k})& v=\Scheck_k\end{array}\right..$$
So the contribution from the eta invariants is
$$ \Res_{r=1/2}\frac{1}{2}(\eta_r(u_v\DD
u_v)-\eta_r(\DD))=\frac{1}{2}\left\{\begin{array}{ll} kq^2(1-q^{2k})&
v=\Shat_k\\ k(1-q^2)(1-q^{2k}) & v=\Scheck_k\end{array}\right..$$

The remaining piece of the computation is
$$\Res_{r=1/2}h_\DD(u_v[\DD,u_v](1+\DD^2)^{-r})= \frac{1}{2}\left\{\begin{array}{ll}
    kq^2(1-q^{2k}) & v=\Shat_k\\
    k(1-q^2)(1-q^{2k})& v=\Scheck_k\end{array}\right..$$
Combining these two pieces, we arrive at the final
index as stated in the theorem.
\end{proof}

\section{Concluding Remarks}
(i) The algebra $A= SU_q(2)$ contains as a subalgebra a copy of
$C(S^1)$. 
The map in odd $K$-theory
$K_1(C(S^1))\to K_1(A)$ induced by the inclusion $C(S^1)\to A$ is an
isomorphism. Therefore for any odd spectral triple $(\AAA,\HH,\DD)$ where
$\AAA$ is smooth in $A$, we can restrict to 
$\mathcal{B} := C(S^1) \cap \AAA$ to
obtain a spectral triple $(\mathcal{B},\HH,\DD)$,
 with $\mathcal{B}$ smooth in $C(S^1)$
to get the following isomorphisms
$$ K_1(\mathcal{B}) \rightarrow K_1(C(S^1)) \rightarrow K_1(A) \leftarrow K_1(\AAA).$$
Therefore an odd spectral triple $(\AAA,\HH,\DD)$, from the point of
view of index pairings with unitaries, 
contains the same information as
$(\mathcal{B},\HH,\DD)$ where 
$\mathcal{B}$ is a smooth subalgebra of
$C(S^1)$.

(ii) The semifinite index is known to be related to pairings in 
$KK$-theory, \cite{CPR,KNR}, but the modular index introduced here
is still mysterious. We will return to an investigation of this new 
index pairing in a later work.

\end{document}